\newtheorem{theorem}{Theorem}[section]
\newtheorem{cor}[theorem]{Corollary}
\newtheorem{pro}[theorem]{Proposition}
\newtheorem{Ex}[theorem]{Example}
\newtheorem{con}[theorem]{Conjecture}
\newtheorem{question}[theorem]{Question} 
\newtheorem{remark}[theorem]{Remark}
\numberwithin{equation}{section}
\newcommand{\Q}{{\bf Q}}
\newcommand{\Z}{{\bf Z}}
\newcommand{\gal}{\rm Gal}
\newcommand{\tor}{\rm Tor}
\title[Multi-quadratic $p$-rational Number Fields]% end with percent
 {Multi-quadratic $p$-rational Number Fields} % This is the full title of the paper
\author{Y. Benmerieme  and A. Movahhedi}
\begin{document}
\maketitle

\begin{abstract}
For each odd prime  $p$, we prove the existence of infinitely many real quadratic fields which are  $p$-rational. Explicit imaginary and real bi-quadratic $p$-rational fields are also given for each prime  $p$. Using a recent method developed by Greenberg, we deduce the existence of Galois extensions of  $\Q$  with Galois group isomorphic to an open subgroup of   $GL_n(\Z_p)$,  for  $n =4$  and  $n =5$  and at least for all the primes $p <192.699.943$.   

\end{abstract}

\thispagestyle{empty}

\section{Introduction} 
Let  $F$  be a number field and $p$ be a prime number. 
Denote by  $S$  the set of primes lying above  $p$  and by  $F_S$  the maximal $p$-extension of  $F$  which is unramified outside the primes above  $S$. The structure of the Galois group  $G(F_S/F)$  reflects the arithmetic properties of the base field  $F$  and it has long  been studied. The number field  $F$  is called  $p$-rational if the Galois group $G(F_S/F)$  is a free pro-$p$-group. The terminology comes from the fact that $p$-rational fields behave in the same way as the field of rational numbers  $\Q$,  with respect to the prime  $p$.
Recently, $p$-rational fields have been revisited by Greenberg \cite{Greenberg16} to study some representations of the absolute Galois group  $G_{\Q}$ of  $\Q$  with large image by a new fundamental (non-geometric, but essentially  $p$-adic) method. 
This work has been continued by Cornut and Ray \cite{Cornut_Ray}. In doing so, Greenberg proposed a new conjecture according to which for any odd prime  $p$, there exist arbitrary large multi-quadratic $p$-rational number fields. If this conjecture holds true for a prime $p$ with complex multi-quadratic fields, then there exists a continuous Galois representation 
$G_{\Q} \to  GL_n(\Z_p)$  
on the general linear group of the ring of  $p$-adic integers  $\Z_p$  with an open image for all degrees  $n \geq 4$. 
%Let us mention here also a recent preprint of Katz \cite{Ka} where he provides motivic Galois representations  
%$\rho : G_{\Q} \to  GL_n(\Q_p)$  
%on the general linear group of the field of  $p$-adic numbers with an open image for any even  $n \geq 6$  and any  $pÊ\equiv 1 \mod 3$  or  $\mod 4$.  
%\cite[Introduction]{Ka}
According to Katz, these representations are "spectacularly non-motivic" \cite[Introduction]{Ka}. For some (motivic) Galois representations  
$\rho : G_{\Q} \to  GL_n(\Q_p)$  
%on the general linear group of the field of  $p$-adic numbers with an open image 
for $n \geq 6$ even and primes  $p \equiv 1 \pmod 3$  or$\pmod 4$  see \cite{Ka}. 

In Section 2, we briefly recall the notion of  $p$-rationality and provide various equivalences for an arbitrary number field to be  $p$-rational with special attention to the case when the number field is totally real. 
In particular, alternative proofs for the necessary and sufficient conditions for a quadratic field to be  $p$-rational are given. 
When $p=3$, a quadratic field $F=\Q(\sqrt{d})$ with $d \equiv -3 \pmod 9$ is  not $3$-rational except when  $d=-3$. 
When $d \not \equiv -3 \pmod 9$, the field  $F$  is  $3$-rational precisely when the $3$-part of the class group of its mirror field  $F'=\Q(\sqrt{-3d})$  is trivial (Proposition \ref{mirror}). In addition,  if the field $F$ is imaginary and its class number is not divisible by $3$, then it is $3$-rational. Using these results, we show the existence of infinitely many imaginary bi-quadratic  $3$-rational fields (Corollary \ref{infinitely bi-quadratic 3-rational}). 
%Then we prove that the degree over $\Q$  of a multi-quadratic $3$-rational number field which contains $\sqrt{-3}$ is at most $8$  (Corollary \ref{p=3 multiquadratic}).
We end this section by proving that for any prime  $p\geq 7$ which does not divide the order of the tame kernel  $K_2(o_F)$ of a real multi-quadratic  $p$-rational field  $F$,  there exist infinitely many totally imaginary quadratic extensions of  $F$  which are  $p$-rational.

For a prime  $p>3$, an imaginary quadratic field  $F$  is  $p$-rational as soon as  $p$  does not divide the class number of  $F$. 
On the other hand a result of Hartung dating back to the seventies \cite{Hartung74} ensures the existence of infinitely many such number fields. 
There therefore exist infinitely many imaginary quadratic fields which are  $p$-rational (even for $p=2$ and $p=3$).  
The real case is rather more involved. 
According to Greenberg's conjecture, there exist infinitely many real quadratic $p$-rational fields. 
In Section 3, we prove the existence of such an infinite family (Theorem \ref{infinitely many real quadratic}). The proof appeals, in particular, to a formula due to Coates expressing  the order of the  $\Z_p$-torsion of the abelianized factor group $G(F_S/F)^{ab}$  in terms of the invariants of the totally real number field  $F$ on the ground level such as class number and discriminant (see formula (\ref{coates}) below), as well as a result of Ono on the existence of some infinitely many fundamental discriminants (see Theorem \ref{Ono}). 
%We conclude this section by showing that if $F$ is a real multi-quadratic $5$-rational field which has tame kernel $K_2(o_F)$ of order prime to $5$, then $F(\sqrt{5})$  is  $5$-rational.

In Section 4, for each odd prime  $p$, we provide an imaginary and a real bi-quadratic number field which are  $p$-rational. Namely, the imaginary bi-quadratic field  $\Q(\sqrt{p(p+2)}, \sqrt{-p})$  and the real bi-quadratic field  $\Q(\sqrt{p(p+2)}, \sqrt{p(p-2)})$ are  $p$-rational as soon as  $p>3$. 
%As a consequence, the field $\Q(\sqrt{p})$ is $p$-rational when $p+2$ or $p-2$ is a square. The same holds when $p-1$ or $p-4$ is square. 
%It is believed that there exist infinitely many such primes (Bunyakovsky's Conjecture). 
%We may then ask if $\Q(\sqrt{p})$ is always $p$-rational. We give a characterisation of its $p$-rationality by ordinary (respectively generalised) Bernoulli numbers when $p  \equiv 1 \pmod 4$ (respectively $p  \equiv 3 \pmod 4$) (Proposition \ref{p-rationality of  Q(p)}). Then we note that our question is equivalent to two old conjectures (Corollary \ref{AAC}), which still remain unsettled.  Namely, the Ankeny-Artin-Chowla conjecture, to be found in \cite[page 480]{AAC52}, as well as the conjecture in a paper of Mordell \cite[page 283]{Mordell61}. So we now have a new interpretation of these conjectures by the $p$-rationality of $\Q(\sqrt{p})$.  
%For $p  \equiv 1 \pmod 4$,
% %the existence of a real quadratic $(p,i)$-regular field, with $i=(p-1)/2$, implies $\Q(\sqrt{p})$ is $p$-rational (see the paragraph following Corollary \ref{AAC}). 
%%Furthermore, 
%we show that if $F$ is a real multi-quadratic field which is both $p$-rational and $(p,i)$-regular, with $i=(p-1)/2$, then $F(\sqrt{p})$ is $p$-rational (Proposition \ref{p=1 mod 4 multiquadratic}). 
%An alternative proof of the last result is given in Corollary \ref{p=5 multiquadratic} (section $3$) for  $p=5$.
Finally, adding the square root of  $-p$  to the above real bi-quadratic field yields 
%a  $p$-rational number field which by the method developed by Greenberg \cite[Proposition 6.1 and Remark 6.8]{Greenberg16}  guarantee the existence of 
continuous Galois representations  
$\rho : G_{\Q} \to  GL_n(\Z_p)$  
with an open image for both  $n =4$  and  $n =5$  and for all the primes $p <192.699.943$.

%\newpage 
We list here the notations used in the text for any number field  $F$. 
They are mostly the usual ones.

 \begin{tabular}[ht]{rl}
%$F$						& a number field; \\
$o_F$					& ring of integers of  $F$; \\
$o_F^\prime=o_F[1/p]$		& ring of  $p$-integers of  $F$; \\
$d_F$					& discriminant of  $F/\Q$; \\
$h_F$					& the class number of  $F$; \\
$r_1$, $r_2$				& number of real (resp. non-conjugate complex)  embeddings of $F$; \\
%$F_{\infty} =\cup F_n$		& cyclotomic $\Z_p$-extension of $F$, with finite layers $F_n$; \\
%$\mu_p$					& group of $p$-th roots of unity; \\
$\mu_{p^n}$, $\mu_{p^\infty}$  & group of $p^n$-th (resp. all $p$-primary) roots of unity; \\
$\mu_p(F)= \mu_p \cap F$  	& group of all  $p$-th roots of unity contained in $F$; \\
$U_F$ 					& group of units in $F$ ; \\
${\bar U}_F =\varprojlim (U_F/p^m)$ 	&  pro-$p$-completion of  $U_F$; \\
$F_v$    					& completion of  $F$ at a prime  $v$   in  $F$; \\
$U_v$    					& group of units in the local field $F_v$ ; \\
$U_v^{(i)}$    				& $i$-th higher unit group in the local field  $F_v$;\\
${\bar U}_v =\varprojlim (U_v/p^m)$       	 &  pro-$p$-completion of  $U_v$ ; \\
$S$                  				& set of all places of $F$ lying above $p$; \\ 
$A_F$                			& $p$-primary part of the class group of $F$; \\
$A'_F$                			& $p$-primary part of the (p)-class group of $F$; \\
${ F_S}$   				& maximal pro-$p$-extension of $F$ unramified outside $S$;\\
${\hat F}$   				& maximal abelian pro-$p$-extension of $F$ unramified outside the \\ &$p$-adic primes;\\
${\tilde F}$   				& compositum of all the $\Z_p$-extensions of $F$;\\
$G_S(F):=\gal(F_S/F)$ 		& Galois group over  $F$  of the maximal $S$-ramified pro-$p$-extensi-\\ &on of $F$; \\
${\cal G}_S(F)$ 			& Galois group over  $F$  of the maximal $S$-ramified extension of $F$; \\
$X_F:=\gal(\hat F/F)$ 		& Galois group over  $F$  of the maximal $S$-ramified abelian pro-$p$-\\ &extension of  $F$; \\
$T_F:=\tor_{\Z_p}(X_F)$ 		& finite torsion subgroup of  $X_F$. 
%$D_F$  					& Leopoldt kernel.
%$X^\circ$  &  maximal finite submodule of  $X_\infty$. \\
\end{tabular}
\vspace{1mm}

We will also need the following notations and conventions:  for any group $M$, the notation  $M/p$  denotes the cokernel of the raising to the  $p$-th power map  $x \mapsto x^p$  on  $M$. For a quadratic field  $F$ with discriminant  $d_F$, we denote by $\chi_{{}_F}$  or  $\chi_{{}_{d_F}}$  the associated quadratic character. 
By a multi-quadratic field we mean a composite of quadratic fields. 
Namely, a Galois extension  $F$  of the field of rational numbers  $\Q$  such that  $\gal(F/\Q)$  is isomorphic to  $(\Z/2)^t$  for some integer  $t\geq1$. The cardinality of a finite set  $A$  is denoted by  $\vert A \vert$.   
%or by  $\# A$. 
\section{$p$-rationality} 
The notion of  $p$-rationality was introduced and studied in \cite {Movahhedi-Nguyen90,Movahhedi88,Movahhedi90}, notably in order to produce infinitely many non abelian number fields satisfying Leopoldt's Conjecture at the prime $p$. 
%We refer the reader to the above papers for an account on the subject as well as  \cite{Greenberg16}. 
A number field  $F$ is called  $p$-rational when the Galois group  $G_S(F)$  is a free  pro-$p$-group. 
In other words,  the second cohomology group  $H^2(G_S(F), \Z/p)$  vanishes.  Such a field satisfies the Leopoldt conjecture at the prime  $p$  since one of the equivalent statements for this conjecture is the vanishing of  the second cohomology group  $H^2(G_S(F), \Q_p/\Z_p)$. 
The terminology takes into account the fact that the field of rationals  $\Q$  is  $p$-rational for all  $p$  
and in some sense a  $p$-rational field behaves like  $\Q$  with respect to  $p$.

Let  $R_2F$  be the  $p$-primary part of the "regular kernel"  (see  \cite [Sections I \& II]{Gras86} and also \cite[Introduction]{GJ89}). If  $p$  is odd or if $F$  has no real embedding then $R_2F$  is the same as the  $p$-primary part of the so called tame kernel  $K_2(o_F)$,  whereas when  $p=2$ and  $F$  possesses a real embedding then the difference between the  $2$-part of $K_2(o_F)$  and  $R_2F$  lies  in the fact that the Hilbert symbols at real places are considered as being tame. A number field  $F$  with trivial  $R_2F$  is called  $p$-regular by Gras and Jaulent \cite{GJ89}. The terminology comes from the fact that the cyclotomic field  $\Q(\mu_p)$  is  $p$-regular exactly when  $p$  is a regular prime. 
For an odd prime  $p$, using Tate's result on  $K_2$  and Galois cohomology, Soul\'e showed  \cite [Lemme 10]{Soule79} that the Chern character realises an isomorphism  
$$K_2(o_F) /p  \cong H^2(Spec \, o_F^\prime, \Z/p(2)) \cong H^2({\cal G}_S(F), \Z/p(2)),$$
where   $\Z/p(2)$  denotes the  $2$-fold Tate twist of   $\Z/p$. Hence the  $p$-regularity of a number field  $F$  can be interpreted by the vanishing of  the second Galois cohomology group  $H^2({\cal G}_S(F), \Z/p(2))$.  
% Here,   $\Z/p(2)$  denotes the  $2$-fold Tate twist of   $\Z/p$. 
Therefore, the two notions of  $p$-regularity and $p$-rationality are of different nature since they correspond to different twists "\`a la Tate". Nevertheless, when the maximal real subfield  $\Q(\mu_p)^+$  of  $\Q(\mu_p)$  is contained in   $F$, then  $F$  is  $p$-rational precisely when  $F$  is  $p$-regular. 
In particular,  $F$  is  $3$-rational precisely when  $3$  does not divide the order of the tame kernel  $K_2(o_F)$.  
%This last assertion will be used later on. 

By the Euler-Poincar\'e characteristic  
$$\chi(G_S(F)) =1-d(G_S(F))+r(G_S(F))=-r_2,$$ 
where  $-r_2$  is the number of complex places of  $F$. Hence the pro-$p$-group  $G_S(F)$  is of rank  $1+r_2$ when it is free and the  $p$-rationality of  $F$  is equivalent to the fact that the abelianized factor group  $X_F:=G_S(F)^{ab} = \gal({\hat F}/F)$  is a free  $\Z_p$-module of rank   $1+r_2$: 
$$X_F \simeq \Z_p^{1+r_2}.$$
In other words a number field  $F$  is  $p$-rational precisely when Leopoldt's conjecture holds for  $F$  at  $p$  and the $\Z_p$-torsion module  $T_F :=\tor_{\Z_p}(X_F)$ is trivial.

%In the 
%%beginning of 
%1980's, 
%%in a series of papers, 
%this finite module  $T_F$ was broadly studied by G. Gras.  
%%\cite{Gras82, Gras83}  
%For instance, 
%%under Leopoldt's conjecture for  $F$  at  $p$, he proves a relationship between this finite module  $T_F$  and the class group of  $F(\mu_p)$  
%%\cite[Theorem I2 and Theorem I3]{Gras82}  
%%which is interesting on its own right. 
An interesting genus formula for this finite module $T_F$  "\`a la Chevalley" is obtained in \cite [Proposition 6]{Gras86}  under Leopoldt's conjecture. For a different proof see  \cite [appendice]{Movahhedi-Nguyen90}. As a consequence, Gras obtains all abelian $p$-extensions  $F$ of  $\Q$ such that  $T_F= 1$    for  $p=2$ and for $p=3$  
\cite [Corollary to Theorem 2]{Gras86}. Since an abelian extension of  $\Q$  satisfies Leopoldt's conjecture, the above fields  $F$  turn out to be those abelian  $p$-extensions of  $\Q$ which are $p$-rational (for $p=2$ or for  $p=3$). 
For an arbitrary odd prime $p$,  Galois  $p$-extensions of  $\Q$ (or an arbitrary totally real number field) which are  $p$-rational are characterised in  \cite [Chapitre III, Proposition 2, p.41]{Movahhedi88} by means of ramification : 
an arbitrary Galois $p$-extension  $F$  of  $\Q$  is  $p$-rational precisely when at most one  non-$p$-adic prime  $\ell$  ramifies in  $F$  and  $\ell$  is inert in the cyclotomic  $\Z_p$-extension of  $\Q$, in other words  
$\ell \not \equiv 1 \pmod {p^2}$. Note that for  $\ell$  to be ramified in a  $p$-extension  $F/\Q$, it is necessary that $\ell \equiv 1 \pmod p$.

%To our knowledge, the relationships between this finite module  $T_F$  and the class group of  $F':=F(\mu_p)$ (under Leopoldt's conjecture for  $F$  at  $p$) dates back to Gras \cite[Theorem I2 and Theorem I3]{Gras82}  which is interesting on its own right: 
%$$ \rk_p (T_F) = s-\delta_F + \rk_p (A'_{F'}^\omega)$$

When the number field  $F$ is totally real, Leopoldt's conjecture for  $F$  at  $p$  is expressed by the non vanishing of the  $p$-adic regulator  $R_{p,F}$  and 
the following remarkable formula 
%(which can be viewed as a special case of the main conjecture of Iwasawa theory) 
linking the order of  $T_F$  with   $R_{p,F}$  was proved by Coates  \cite [Appendix, Lemma 8]{Coates77} 
(see also \cite [Proposition 2.1]{Ng86}) :  assume  $R_{p,F} \neq 0$, then 
\begin{equation}\label{coates}
 \vert T_F \vert  \sim  \vert \mu_{p^\infty} \cap F(\mu_p) \vert  \frac{h_F R_{p,F}}{\sqrt{d_F}} 
 \prod_{v\vert p} (1-(N v)^{-1}),  
\end{equation} 
where  $p$  is any odd prime,  $\sim$  stands for equality up to a  $p$-adic unit and  $N: =N_{F/\Q}$  is the norm in  $F/\Q$. The above formula theoretically  provides a way to test  $p$-rationality of totally real number fields. For instance, suppose the totally real field  $F$  to be quadratic. Then, the above formula becomes 
\begin{equation}\label{sgn}
\vert T_F \vert  \sim   h_F R_{p,F} /p^{1/e},  
\end{equation} 
where  $e:=2$ or  $1$  according as  $p$  ramifies in  $F$  or not. 
Denote by  $\varepsilon$  the fundamental unit of  $F$  so that  $R_{p,F} = \log_p(\varepsilon)$,  where  $\log_p$  is the  $p$-adic logarithm. Then  the  $p$-adic valuation of  
$R_{p,F}$ is the same as that of  ($\varepsilon^{q-1}-1$),  where  $q:=Nv$  and  $v$  is a  $p$-adic prime of  $F$.  Hence,  the real quadratic field  $F$  is  $p$-rational precisely when  $p$  does not divide the class number of  $F$  and 
$\varepsilon^{q-1} \in  U_v^{(1)} \setminus U_v^{(1+e)}$.  
%(note that, the  ${\mathfrak p}_v$-valuation of  $ \vert T_F \vert$  is a multiple of  $e$  hence when  $e=2$  and  $p$  divides  $ \vert T_F \vert$,  then either  $p$  divides  $h_F$  or  $\varepsilon^{q-1} \in U_v^{(1+e)}=U_v^{(3)}$  is a  $p$-th power).    
Now, when  $p>3$  or  $p=3$  and is unramified in  $F$, then  $e <p-1$ and raising to the  $p$-th power realises an isomorphism between $U_v^{(1)}$  and  $U_v^{(1+e)}$  \cite [Prop 9, page 219]{Serre62}.  Hence $\varepsilon^{q-1} \in U_v^{(1+e)}$  precisely when  $\varepsilon^{q-1}$ or   $\varepsilon$  is a  $p$-th power in the completion  $F_v$. 
Therefore, when  $p>3$  or  $p=3$  and is unramified in  $F$, then the real quadratic field  $F$  is  $p$-rational precisely when  $p$  does not divide the class number of  $F$  and the fundamental unit of  $F$  is not a  $p$-th power in  $F_v$. 
A different proof of this last equivalence was given in \cite[Proposition 4.1]{Greenberg16} and we shall provide  another alternative proof below (see Corollary \ref{cor2}).  

%On the other hand, if the totally real number field  $F$  is abelian of degree  $r$  corresponding to a group of Dirichlet characters  $X$, then the $p$-adic class number formula for  $F$ reads 
%%\cite[Theorem 5.24]{Washington97}
%$$ \frac{2^{r-1}h_F R_{p,F}}{\sqrt{d_F}} = \prod_{1\neq \chi \in X} (1-\frac{\chi(p)}{p})^{-1} L_p(1,\chi).$$
%
% as  pressage / foretell / predict of the main conjecture of Iwasawa theory, Coates proved the following interesting formula 

Reconsider the case of a general totally real number field  $F$  of degree  $r$.  Let  $\zeta_{F,p}(s)$  be the  $p$-adic  $\zeta$-function of  $F$.   
The residue of  $\zeta_{F,p}$  at   $s=1$  is given by \cite{Colmez88} 
$$\lim_{s \to 1} (s-1) \zeta_{F,p}(s) = \frac{2^{r-1}h_F R_{p,F}}{\sqrt{d_F}} \prod_{v\vert p} (1-(N v)^{-1})$$ 
and the Leopoldt conjecture for  $F$  at  $p$  (which asserts the non vanishing of  $R_{p,F}$)  is valid precisely when  
$\zeta_{F,p}$  has a simple pole at  $s=1$. 

Under Leopoldt's conjecture for  $F$  at the prime  $p$,  the above formula (\ref{coates}) becomes  
\begin{equation}\label{coates-colmez}
 \vert T_F \vert  \sim  \vert \mu_{p^\infty} \cap F(\mu_p) \vert \;  \lim_{s \to 1} (s-1) \zeta_{F,p}(s).  
 \end{equation} 
Hence, we have the following interpretation of  $p$-rationality in terms of the residue of the  $p$-adic  $\zeta$-function: 
\begin{pro} 
\label{residue}  
Let  $p$  be an odd prime number and  $F$  be any totally real number field. 
Let  $\mu_{p^m} := \mu_{p^\infty} \cap F(\mu_p) $. 
Then  $F$  is  $p$-rational precisely when the  $p$-adic valuation of the residue of the  $p$-adic $\zeta$-function of  $F$  at  $1$  is  $-m$: 
$$\lim_{s \to 1} (s-1) \zeta_{F,p}(s) \sim p^{-m}. $$ 

\end{pro}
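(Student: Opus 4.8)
The plan is to derive Proposition \ref{residue} directly from formula (\ref{coates-colmez}), which itself is just formula (\ref{coates}) rewritten using Colmez's residue formula for $\zeta_{F,p}$, together with the characterization of $p$-rationality recalled earlier: a totally real field $F$ is $p$-rational precisely when Leopoldt's conjecture holds for $F$ at $p$ and $T_F = 1$. So the statement is essentially a translation of ``$T_F = 1$'' into a $p$-adic valuation condition on the residue, once Leopoldt is assumed.

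First I would observe that, since $p$ is odd and $F$ is totally real, $F(\mu_p)$ is a CM field (or equals $\Q(\mu_p)$ when $F=\Q$) whose maximal real subfield is $F$; the torsion subgroup $\mu_{p^\infty} \cap F(\mu_p)$ is therefore a finite cyclic group of the form $\mu_{p^m}$ with $m \geq 0$, so the quantity $\vert \mu_{p^\infty} \cap F(\mu_p)\vert = p^m$ is a well-defined power of $p$; note $m = 0$ is possible (e.g. $F = \Q$, $p > 2$ gives $F(\mu_p) = \Q(\mu_p)$ and $m \geq 1$, but in general $m$ can be $0$). Next, I would argue the two implications. If $F$ is $p$-rational then Leopoldt holds at $p$, so $R_{p,F} \neq 0$ and $\zeta_{F,p}$ has a simple pole at $s=1$; moreover $T_F = 1$, so $\vert T_F\vert \sim 1$, and (\ref{coates-colmez}) gives $p^m \cdot \lim_{s\to 1}(s-1)\zeta_{F,p}(s) \sim 1$, i.e. the residue is $\sim p^{-m}$, which is exactly the valuation statement. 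Conversely, if the residue has $p$-adic valuation $-m$, then in particular it is nonzero, hence $\zeta_{F,p}$ has a simple pole at $1$ and Leopoldt holds for $F$ at $p$; then (\ref{coates-colmez}) applies and yields $\vert T_F \vert \sim p^m \cdot p^{-m} = 1$, so $T_F$, being a finite group whose order is a power of $p$ that is a $p$-adic unit, must be trivial; combined with Leopoldt this says $X_F \simeq \Z_p^{1+r_2}$, i.e. $F$ is $p$-rational.

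The one point that needs a little care is that formula (\ref{coates-colmez}) was stated ``under Leopoldt's conjecture for $F$ at $p$,'' so in the converse direction I must first extract the validity of Leopoldt from the hypothesis before I am entitled to use the formula. This is handled by the remark, recalled just before the proposition, that Leopoldt at $p$ for the totally real field $F$ is equivalent to $\zeta_{F,p}$ having a simple pole at $s=1$, equivalently to the residue being nonzero; and a residue of $p$-adic valuation $-m$ (a finite integer) is certainly nonzero. I expect this interplay between ``the formula is conditional on Leopoldt'' and ``Leopoldt is itself read off from the residue'' to be the only genuine subtlety; everything else is the bookkeeping that $\sim$ means equality up to a $p$-adic unit and that $\vert T_F\vert$ is a power of $p$, so $\vert T_F\vert \sim 1$ if and only if $T_F = 1$.

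Finally, I would note that the proof uses only results already available in the excerpt: the equivalence ``$p$-rational $\iff$ Leopoldt $+$ $T_F=1$'' for $F$ (totally real), Coates' formula (\ref{coates}) / Colmez's residue formula packaged as (\ref{coates-colmez}), and the Leopoldt-$\iff$-simple-pole criterion. No new ingredient is required, so the write-up is short: state that $\mu_{p^m} = \mu_{p^\infty}\cap F(\mu_p)$ makes sense as a $p$-group of order $p^m$, then prove the two implications via (\ref{coates-colmez}) as above.
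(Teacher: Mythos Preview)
Your proposal is correct and matches the paper's approach: the paper does not give a separate proof block for this proposition, stating it as an immediate consequence of formula~(\ref{coates-colmez}) together with the equivalence ``$p$-rational $\iff$ Leopoldt $+\ T_F=1$''; your write-up makes the two implications explicit and correctly handles the one subtlety (extracting Leopoldt from the nonvanishing of the residue before invoking~(\ref{coates-colmez}) in the converse direction). One small slip: since $\mu_p \subset F(\mu_p)$ by construction, one always has $m \geq 1$, so your aside that ``$m$ can be $0$'' should be dropped.
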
 
When the totally real number field  $F$  is abelian corresponding to a group of Dirichlet characters  $X$, then 
$$\lim_{s \to 1} (s-1) \zeta_{F,p}(s) = \lim_{s \to 1} (s-1) L_p(s,\chi_{{}_0}) \prod_{\chi_{{}_0} \neq \chi \in X} L_p(1,\chi), $$ 
where  $\chi_{{}_0}$  is the trivial character and  $L_p(\quad ,\chi)$  is the  $p$-adic  $L$-function associated to  $\chi$.  Since the residue of   $L_p(s,\chi_{{}_0})$  at  $s=1$  is  $(1-\frac{1}{p})$, according to the above proposition, $F$  is  $p$-rational precisely when  
\begin{equation}\label{product of p-adic L}
\prod_{\chi_{{}_0} \neq \chi \in X} L_p(1,\chi) \sim p^{-m+1}.
\end{equation} 
Therefore, we have the following special case which will be used further on to prove the existence of infinitely many real quadratic $p$-rational fields. 
\begin{cor} 
\label{Lp is unit}  
Let  $p$  be an odd prime. A  real quadratic number field  $F$  is  $p$-rational precisely when  $L_p(1,\chi_{{}_F})$  is a $p$-adic unit. 
\end{cor}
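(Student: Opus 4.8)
The plan is to specialize the abelian criterion (\ref{product of p-adic L}) to a real quadratic field $F$. For such a field, the group $X$ of Dirichlet characters consists of just the trivial character $\chi_{{}_0}$ and the quadratic character $\chi_{{}_F}$, so the product over nontrivial characters in (\ref{product of p-adic L}) reduces to the single factor $L_p(1,\chi_{{}_F})$. Thus the criterion reads $L_p(1,\chi_{{}_F}) \sim p^{-m+1}$, where $p^m = \vert \mu_{p^\infty} \cap F(\mu_p)\vert$.

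Next I would pin down the value of $m$. Since $p$ is odd and $F$ is real quadratic, $F$ does not contain $\mu_p$, so $\mu_p(F)$ is trivial; moreover $F(\mu_p) = \Q(\mu_p)$ or a quadratic extension thereof containing no $p$-th roots of unity beyond $\mu_p$ itself (the field $F(\mu_p)/\Q(\mu_p)$ is at most quadratic and cannot introduce new $p$-power roots of unity since $[\Q(\mu_{p^2}):\Q(\mu_p)] = p$ is odd). Hence $\mu_{p^\infty} \cap F(\mu_p) = \mu_p$ and $m = 1$. Substituting $m=1$ into the criterion above gives exactly: $F$ is $p$-rational if and only if $L_p(1,\chi_{{}_F}) \sim p^{0} = 1$, i.e. $L_p(1,\chi_{{}_F})$ is a $p$-adic unit.

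I expect the only genuinely delicate point to be the verification that $m=1$ in all cases, i.e. that adjoining $\sqrt{d_F}$ to $\Q(\mu_p)$ never produces extra $p$-power roots of unity; but as noted this is immediate from the fact that $p$ is odd, so $[\Q(\mu_{p^{n+1}}):\Q(\mu_{p^n})]=p$ is odd and cannot be realized inside a quadratic extension. Everything else is a direct substitution into (\ref{product of p-adic L}), which itself rests on Proposition \ref{residue} and the factorization of the $p$-adic $\zeta$-function of an abelian field into $p$-adic $L$-functions — all already established in the text.
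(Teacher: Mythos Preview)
Your proposal is correct and follows exactly the route the paper takes: the corollary is stated as an immediate specialization of the abelian criterion (\ref{product of p-adic L}) to the case where $X = \{\chi_{{}_0}, \chi_{{}_F}\}$, and you have simply made explicit the verification that $m=1$ (which the paper leaves tacit). Your degree argument for $m=1$ is the natural one and nothing more is needed.
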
 
This Corollary (\ref{Lp is unit}) together with the above relation (\ref{product of p-adic L}) immediately yields the fact that a real multi-quadratic field is  $p$-rational precisely when it is the case of all its quadratic subfields. For a more general result see  \cite[Proposition 3.6]{Greenberg16}.
\vspace{1mm} 
On the completely algebraic side, we have the following proposition noticed by Nguyen Quang Do \cite [Appendix]{Ng}. 
%Here we
%%slightly improve the statement and 
%offer a proof which slightly simplifies his original proof. 

\begin{pro} 
\label{characterisation}  
A  number field  $F$  is  $p$-rational precisely when the following three conditions are satisfied: \\
(i) The $p$-Hilbert  class field  $H \subseteq {\tilde F}$; \\
(ii) The natural map  $\mu_p(F) \to  \oplus_{v\vert p}\mu_p(F_v)$  is an isomorphism;  \\
(iii) The natural map  $U_F/p  \to  \oplus_{v\vert p}U_v/p$  is injective. 
\end{pro}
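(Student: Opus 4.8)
\medskip
\noindent\emph{Sketch of proof.}
The starting point is the characterisation recalled above: $F$ is $p$-rational if and only if $X_F\simeq\Z_p^{1+r_2}$, i.e.\ Leopoldt's conjecture holds for $F$ at $p$ and $T_F=1$; equivalently, since $\mathrm{rank}_{\Z_p}X_F=1+r_2+\delta$ with $\delta\ge0$ the Leopoldt defect and a finitely generated $\Z_p$-module is free exactly when its rank equals the dimension of its reduction mod $p$, $F$ is $p$-rational if and only if $\dim_{\mathbf{F}_p}(X_F/p)=1+r_2$. The engine of the proof is the fundamental four term exact sequence of $\Z_p$-modules furnished by $p$-ramified class field theory applied to $\hat F$ (see e.g.\ \cite{Movahhedi-Nguyen90,Gras86}):
\[
1 \longrightarrow \overline E \longrightarrow \prod_{v\mid p}\overline{F_v^{\times}} \longrightarrow X_F \longrightarrow A'_F \longrightarrow 1 ,
\]
where $E=(o_F^{\prime})^{\times}$ is the group of $p$-units and $\overline E$ is the closure of its diagonal image in $\prod_{v\mid p}\overline{F_v^{\times}}$ (the cokernel is $A'_F$ because killing the decomposition groups $\overline{F_v^{\times}}$, $v\mid p$, amounts to imposing complete splitting at the primes above $p$).

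One then translates the three conditions into properties of this sequence and of its reduction modulo $p$ (obtained by splitting the four term sequence into two short exact sequences and applying the snake lemma). Separating off the valuations, $\overline{F_v^{\times}}\simeq\overline U_v\times\Z_p$, identifies the subgroup of $X_F$ generated by the inertia subgroups $I_v$ $(v\mid p)$ with the image of $\prod_{v\mid p}\overline U_v$. Since $\gal(\hat F/H)=\langle I_v:v\mid p\rangle$ ($H$ being the maximal everywhere unramified subextension of $\hat F/F$) and $\gal(\hat F/\tilde F)=T_F$ (each continuous surjection of the finitely generated module $X_F$ onto $\Z_p$ kills $T_F$, and the induced surjections of $X_F/T_F\simeq\Z_p^{1+r_2+\delta}$ have trivial common kernel), condition (i) is equivalent to the inclusion $T_F\subseteq\langle I_v:v\mid p\rangle$, that is, to the vanishing of the natural map $T_F\to A_F=X_F/\langle I_v:v\mid p\rangle$ --- the ``ideal-class part'' of $T_F$. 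The snake-lemma reduction, for its part, involves exactly the groups $U_F/p$, $\bigoplus_{v\mid p}U_v/p$, $\bigoplus_{v\mid p}\mu_p(F_v)$ and $\mu_p(F)$ appearing in (ii) and (iii): the cokernel of $\mu_p(F)\to\bigoplus_{v\mid p}\mu_p(F_v)$ --- whose triviality is precisely condition (ii) --- measures the roots-of-unity contribution to $T_F$, while injectivity of $U_F/p\to\bigoplus_{v\mid p}U_v/p$ --- condition (iii) --- together with (ii) is equivalent to the conjunction: Leopoldt holds for $F$ at $p$, and the kernel of $T_F\to A'_F$ arising from the fundamental sequence is trivial, so that $T_F$ embeds into $A'_F$.

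Assembling: if $F$ is $p$-rational, then $X_F\simeq\Z_p^{1+r_2}$ is torsion free, so $\hat F=\tilde F\supseteq H$ gives (i), while $\delta=0$ and $T_F=1$ give (ii) and (iii). Conversely, assuming (i)--(iii), conditions (ii) and (iii) give Leopoldt (so $X_F\simeq\Z_p^{1+r_2}\oplus T_F$) and make $T_F$ inject into $A'_F$, which is a quotient of $A_F$; condition (i) makes the composite $T_F\to A_F\to A'_F$ vanish, and therefore $T_F=1$, i.e.\ $F$ is $p$-rational. The delicate point is this passage modulo $p$ and the attendant homological bookkeeping: the three conditions are not a direct-sum decomposition of $T_F$, so one must order the implications correctly, keep track of $A_F$ versus $A'_F$ (they differ by the subgroup generated by the classes of the primes above $p$) and of $\mu_p$ versus $\mu_{p^\infty}$, and --- the subtlest step, which I expect to carry most of the work --- show that (ii) and (iii) together force Leopoldt's conjecture, which is implied by neither of them separately. (A parallel cohomological route runs through Poitou--Tate duality: $p$-rationality is the vanishing of $H^2(G_S(F),\Z/p)$, the nine term exact sequence peels off (ii) as the assertion that $\mu_p(F)\to\bigoplus_{v\mid p}\mu_p(F_v)$ is an isomorphism, and the remaining term is the vanishing of $\ker\bigl(H^1(G_S(F),\mu_p)\to\bigoplus_{v\mid p}H^1(F_v,\mu_p)\bigr)$, which via Kummer theory and the snake lemma unwinds into conditions (i) and (iii).)
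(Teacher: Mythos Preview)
Your overall strategy---snake lemma applied to the $p$-th power map on a class-field-theoretic exact sequence---is exactly the paper's. Where you diverge is in the choice of sequence, and the paper's is better adapted to the statement. The paper works with
\[
0 \longrightarrow D_F \longrightarrow \bar U_F \longrightarrow \bigoplus_{v\mid p}\bar U_v \longrightarrow \gal(\hat F/H) \longrightarrow 0,
\]
where $\bar U_F=\varprojlim U_F/p^m$ is the pro-$p$ \emph{completion} of the global units (so $\bar U_F/p=U_F/p$ and $\bar U_F[p]=\mu_p(F)$ unconditionally) and $D_F$ is by definition the Leopoldt kernel. This buys two things. First, the snake-lemma terms are literally the groups in (ii) and (iii), with no translation needed; your sequence instead involves the \emph{closure} $\bar E$ of the $p$-units, so the snake lemma yields $\bar E[p]$ and $\bar E/p$, and identifying these with $\mu_p(F)$ and $U_F/p$ (plus passing from $p$-units to units) already leans on Leopoldt---which is what you are trying to extract. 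This is the circularity lurking behind the step you rightly flag as subtlest. Second, because $D_F$ sits explicitly on the left, a short diagram chase on the four-term sequence shows that under (ii) the image of $D_F/p$ lands inside $\ker\bigl(U_F/p\to\bigoplus_{v\mid p}U_v/p\bigr)$; condition (iii) then kills $D_F/p$, hence $D_F=0$, and Leopoldt falls out in two lines. With $D_F$ gone the sequence is short exact, a second pass of the snake lemma combined with (ii) and (iii) shows $\gal(\hat F/H)$ is torsion-free, and (i) gives $T_F=\gal(\hat F/\tilde F)\subseteq\gal(\hat F/H)$, whence $T_F=0$. No $A_F$-versus-$A'_F$ bookkeeping is required. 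Your Poitou--Tate alternative is a genuinely different route the paper does not take.
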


\begin{proof}
By definition,  the Leopoldt kernel  $D_F$  is the kernel of the diagonal map  ${U_F} \rightarrow  \oplus_{v\vert p} {U}_v$  once tensored with  $\Z_p$  and we have a four-term exact sequence provided by global class field theory    
$$0 \rightarrow D_F  \rightarrow {\bar U_F} \rightarrow  \oplus_{v\vert p} {\bar U}_v \rightarrow \gal({\hat F}/H) \rightarrow 0.$$
First assume  $F$  to be  $p$-rational. Then obviously  $H \subseteq {\tilde F}$ and  $D_F=0$. So applying the snake lemma to the above exact sequence for raising to the $p$-th power maps  shows the other two conditions (ii)  and  (iii).   
Now  assume the three conditions to be fulfilled. 
Consider the commutative diagram where the vertical maps consist of raising to the $p$-th power: 
$$
\begin{array}{cccccccccccc}
0& \rightarrow & D_F  & \rightarrow {\bar U_F} &\rightarrow & \oplus_{v\vert p} {\bar U}_v & \rightarrow & \gal({\hat F}/H) &\rightarrow & 0\\
&&   \downarrow   & \downarrow &&   \downarrow &&   \downarrow \\
0& \rightarrow & D_F  & \rightarrow {\bar U_F} &\rightarrow & \oplus_{v\vert p} {\bar U}_v & \rightarrow & \gal({\hat F}/H) &\rightarrow & 0.
  \end{array}
  $$

A diagram chasing shows that under condition (ii),  $D_F/p$  is contained in the kernel of   
$U_F/p  \to  \oplus_{v\vert p}U_v/p$. This latter map is, in turn, injective by assumption (iii). 
Consequently  $D_F=0$  and, once again, we are faced with a short exact sequence. 
Applying once more the snake lemma to the diagram obtained by the raising to the $p$-th power maps shows that  $\gal({\hat F}/H)$  is torsion free. Hence  $F$  is  $p$-rational. 
% notice that  $\gal({\hat F}/H)$   and   $X_S=\gal({\hat F}/K)$  have the same torsion  T. 
%Then a diagramme chase in the above ex
\end{proof}

%
%We know that the cyclotomic field $\Q(\mu_p)$ is $p$-rational precisely when $p$ is regular, here we prove the same result for its maximal real subfield
%
%\begin{pro}  \label{pregular}  
%
%The real maximal subfield $F=\Q(\mu_p)^+$ of $\Q(\mu_p)$ is $p$-rational precisely when $p$ is regular.
%
%\end{pro}
%
%\begin{proof}  By (\ref{product of p-adic L}), $F$ is $p$-rational when   $$\prod_{i=1}^{(p-3)/2} L_p(1,\omega^{2i})$$   is a $p$-adic unit. Here  $\omega$  is the Teichm\"uller character. On the other hand    $$L_p(1, \omega^{2i}) \equiv L_p(1-2i, \omega^{2i}) \equiv - \frac{B_{2i}}{2i}    \pmod p.$$   where $B_i$ is the $i$th Bernoulli number. Hence $F$ is $p$-rational precisely when $p$ does not divide the numerator of $B_{2i}$ for all $i=1, 2, ..., (p-3)/2,$ which means that $p$ is regular.
%
% \end{proof}
%
%This result shows by the characterisation of $p$-rationality given in Proposition \ref{characterisation} that when $p$ is regular, every unit of $F=\Q(\mu_p)^+$ which is locally a $p$-power, it is also globally. However, this is not the case when $p$ is irregular, since under Vandiver's Conjecture which states that $p$ does not divide the class number of $F$, we get that there exists at least a unit of $F$ which is not globally a $p$-power but locally it is. \\
%

Consider here a totally real number field  $F$  Galois over  $\Q$ and satisfying Leopoldt's conjecture at  $p$. 
If the ramification index  $e$  of the prime  $p$  in  $F/\Q$  is prime to  $p$, then Condition  $(i)$  in the above Proposition \ref{characterisation} is equivalent to  $p\nmid h_F$. Also, if  $p-1\nmid e$,  then Condition  $(ii)$  is fulfilled. 
Therefore, the  $p$-rationality of  $F$  can be read through the class number and the units in the following way: 

\begin{pro} 
\label{ramification index} 
Let  $F$  be a totally real number field which is Galois over  $\Q$ and satisfies Leopoldt's conjecture at the prime  $p$. 
Suppose neither  $p$  nor   $p-1$  divide the ramification index of the prime  $p$  in  $F/\Q$. 
Then  $F$  is  $p$-rational  precisely when the following two conditions are satisfied: \\
(i) The prime  $p$  does not divide the class number  $h_F$ and \\
(ii) The natural map  $U_F/p  \to  \oplus_{v\vert p}U_v/p$  is injective. 
\end{pro}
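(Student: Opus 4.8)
The plan is to deduce everything from the purely algebraic characterisation of Proposition \ref{characterisation}. Since condition $(iii)$ there is verbatim condition $(ii)$ of the present statement, it suffices to show that, under the two divisibility hypotheses on the ramification index $e$, condition $(i)$ of Proposition \ref{characterisation} amounts to ``$p\nmid h_F$'' and condition $(ii)$ of Proposition \ref{characterisation} is automatically satisfied. So the whole argument reduces to analysing those two conditions separately.

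For condition $(i)$, namely $H\subseteq\tilde F$: because $F$ is totally real and satisfies Leopoldt's conjecture at $p$, the Galois group $\gal(\tilde F/F)$ is a free $\Z_p$-module of rank $1+r_2=1$, so $\tilde F$ is nothing but the cyclotomic $\Z_p$-extension $F_\infty$ of $F$. Here is where I would bring in $p\nmid e$: the intersection $F\cap\Q_\infty$ is one of the layers $\Q_m$ of the cyclotomic $\Z_p$-extension of $\Q$, in which $p$ is totally ramified, so $p^m\mid e$; as $p\nmid e$ this forces $m=0$, whence $F$ and $\Q_\infty$ are linearly disjoint over $\Q$. Passing to the completions one checks that the residue extension at a place $v\mid p$ stays trivial, so $F_\infty/F$ is totally ramified at every prime above $p$; in particular every nontrivial subextension of $F_\infty/F$ is ramified above $p$. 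Since $H/F$ is everywhere unramified, the inclusion $H\subseteq\tilde F=F_\infty$ can hold only if $H=F$, i.e. only if $p\nmid h_F$; and the converse is trivial. This proves condition $(i)$ of Proposition \ref{characterisation} is equivalent to condition $(i)$ of the present statement.

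For condition $(ii)$ of Proposition \ref{characterisation}, I would argue as follows. As $F$ is totally real and $p$ is odd, $\mu_p(F)=\mu_p\cap F=1$. On the other hand, $\Q_p(\mu_p)/\Q_p$ is totally (tamely) ramified of degree $p-1$, so the inclusion $\mu_p\subseteq F_v$ for a place $v\mid p$ would force $(p-1)\mid e(F_v/\Q_p)=e$; since $p-1\nmid e$ by hypothesis, $\mu_p(F_v)=1$ for every $v\mid p$. Hence the natural map $\mu_p(F)\to\bigoplus_{v\mid p}\mu_p(F_v)$ is the identity map of the trivial group, in particular an isomorphism, so condition $(ii)$ of Proposition \ref{characterisation} holds. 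Combining the three conditions of Proposition \ref{characterisation} with these translations yields the proposition.

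The only genuinely delicate point is the ramification bookkeeping underlying condition $(i)$: one must be sure that $F_\infty/F$ is actually ramified above $p$, so that it has no nontrivial subextension in common with the Hilbert class field, and this is precisely where the assumption $p\nmid e$ (together with Leopoldt, used to identify $\tilde F$ with $F_\infty$) is indispensable. Everything else is a direct translation.
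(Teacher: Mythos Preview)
Your proof is correct and follows precisely the route the paper takes: the paper's argument is the short paragraph immediately preceding the statement, which simply asserts that $p\nmid e$ makes condition~$(i)$ of Proposition~\ref{characterisation} equivalent to $p\nmid h_F$ (via Leopoldt and $\tilde F=F_\infty$) and that $p-1\nmid e$ forces condition~$(ii)$ to hold automatically. You have supplied exactly the details the paper leaves implicit, including the ramification computation showing $F_\infty/F$ is totally ramified above $p$ and the local observation that $\mu_p\subset F_v$ would force $(p-1)\mid e$.
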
 

Consider now a cyclic extension  $F$  of odd prime degree  $n$  over  $\Q$. 
If  $n=p$,  as explained before, $F$  is  $p$-rational precisely when (apart from  $p$  which may or may not be ramified in  $F$) at most one non-$p$-adic prime  $\ell \not \equiv 1 \pmod {p^2}$  ramifies in  $F$.  For  $n \neq p$, the cyclic field  $F$  satisfies the hypotheses of the above Proposition \ref {ramification index} and we have 

\begin{cor} 
\label{cor cyclic} 
Let  $F$  be a cyclic extension of  $\Q$  of odd prime degree $n \neq p$.  
Then  $F$  is  $p$-rational  precisely when the following two conditions are satisfied: \\
(i) The prime  $p$  does not divide the class number  $h_F$ and \\
(ii) The natural map  $U_F/p  \to  \oplus_{v\vert p}U_v/p$  is injective. 
\end{cor}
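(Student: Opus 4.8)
The plan is to deduce this corollary directly from Proposition~\ref{ramification index}, by verifying that a cyclic extension $F/\Q$ of odd prime degree $n\neq p$ meets all of its hypotheses; here, as in this whole part of the paper, $p$ denotes an odd prime.

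First I would record that $F$ is totally real. For any embedding $F\hookrightarrow\mathbb{C}$, complex conjugation induces an element of $\gal(F/\Q)$ of order dividing $2$; but $\gal(F/\Q)$ is cyclic of odd order $n$, so this element is trivial, whence $F\subset\mathbb{R}$ and $F$ is totally real. Moreover, since $F/\Q$ is abelian, the Leopoldt conjecture for $F$ at $p$ holds (a theorem of Brumer, already invoked above). Thus $F$ is a totally real field, Galois over $\Q$ and satisfying Leopoldt's conjecture at $p$, which are the standing assumptions of Proposition~\ref{ramification index}.

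It then remains to control the ramification index $e$ of $p$ in $F/\Q$. The inertia group of a prime of $F$ above $p$ is a subgroup of the cyclic group $\gal(F/\Q)$ of prime order $n$, hence $e\in\{1,n\}$. Since $n$ is prime and $n\neq p$ we have $\gcd(n,p)=1$, so $p\nmid e$. Also $p$ is odd, so $p-1$ is an even integer at least $2$, while $e$ equals $1$ or the odd prime $n$; in either case $p-1\nmid e$. Therefore neither $p$ nor $p-1$ divides $e$, and Proposition~\ref{ramification index} applies, yielding precisely the stated equivalence: $F$ is $p$-rational if and only if $p\nmid h_F$ and the natural map $U_F/p\to\oplus_{v\mid p}U_v/p$ is injective.

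There is no genuine obstacle in this argument once Proposition~\ref{ramification index} is available; the only point deserving a line of care is the inequality $p-1\nmid e$, which rests on the combination of facts that $n$ is an odd prime (so the only divisors of $e$ are $1$ and $n$) and that $p$ is odd (so $p-1$ is even and $\geq 2$, hence equals neither $1$ nor $n$).
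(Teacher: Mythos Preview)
Your proof is correct and follows exactly the route the paper takes: the paper simply asserts that for $n\neq p$ the cyclic field $F$ satisfies the hypotheses of Proposition~\ref{ramification index}, and you have spelled out why (total reality, Leopoldt via abelianity, and the divisibility constraints on $e\in\{1,n\}$). Your explicit check that $p-1\nmid e$, using that $p-1$ is even while $e$ is $1$ or the odd prime $n$, is the one point worth writing out, and you handled it correctly.
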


When the totally real field  $F$  is quadratic, special attention must be paid to the above condition,  $p-1\nmid e$  when  $p=3$ and is ramified in  $F$,  which we will treat a little further on. Otherwise, the hypotheses of the above Proposition \ref {ramification index} are fulfilled leading to an alternative proof of the following corollary 
 \cite[Proposition 4.1]{Greenberg16}. 

%When the totally real field  $F$  is quadratic and  when either $p\geq 5$ or  $p =3$ 
%and is unramified in $F/\Q$  then the hypotheses of the above Proposition \ref {ramification index} are fulfilled leading to an alternative proof of the following corollary already shown by 
%Greenberg  \cite[Proposition 4.1]{Greenberg16}. 

\begin{cor} 
\label{cor2} 
Let  $F$  be a real quadratic number field. 
Suppose either  $p\geq 5$ or  $p =3$  and is unramified in $F/\Q$. 
Then  $F$  is  $p$-rational  precisely when the following two conditions are satisfied: \\
(i) The prime  $p$  does not divide the class number  $h_F$ and \\
(ii) The fundamental unit of  $F$  is not a p-th power in the completion  $F_v$  of  $F$  at a  $p$-adic prime  $v$. 
\end{cor} 

If instead the quadratic number field  $F$  is imaginary then, the $p$-Hilbert  class field  $H$ being Galois over  $\Q$,  the first condition $H \subseteq {\tilde F}$  in Proposition \ref{characterisation} is equivalent to  $H$ being contained in the anticyclotomic  $\Z_p$-extension of  $F$,  since the layers of the cyclotomic  $\Z_p$-extension of  $F$ are necessarily ramified at the prime(s) above  $p$. Therefore, we also get an alternative proof of the following corollary 
 \cite[Proposition 4.1]{Greenberg16}.

\begin{cor} 
\label{cor3} 
Let  $F$  be an imaginary quadratic number field. 
Suppose either  $p\geq 5$ or  $p =3$  and is unramified in $F/\Q$. 
Then  $F$  is  $p$-rational  precisely when the $p$-Hilbert class field  $H$  is contained in the anticyclotomic  $\Z_p$-extension of  $F$.  
In particular: \\
(i) If  $p$  does not divide the class number  $h_F$, then   $F$  is  $p$-rational. \\
(ii) If  $F$  is  $p$-rational then  $A_F$, the  $p$-primary part of the class group of  $F$,  is at most cyclic.  
\end{cor}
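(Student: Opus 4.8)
The approach is to deduce everything from Proposition~\ref{characterisation}: I will check that for an imaginary quadratic $F$ satisfying the hypothesis, conditions (ii) and (iii) there hold automatically, so that the $p$-rationality of $F$ reduces to condition (i), the inclusion $H\subseteq\tilde F$. For (iii): $U_F$ is finite, of order $2$, $4$ or $6$, the order $6$ occurring only for $F=\Q(\sqrt{-3})$, which is excluded since $p=3$ is ramified there; hence $|U_F|$ is prime to $p$, so $x\mapsto x^p$ is bijective on $U_F$, $U_F/p=0$, and the map in (iii) is vacuously injective. For (ii): $\mu_p(F)=1$, because $[\Q(\mu_p):\Q]=p-1\geq4$ for $p\geq5$, while for $p=3$ one would need $F=\Q(\sqrt{-3})$, again excluded; and for each $v\mid p$ one has $\mu_p(F_v)=1$, since $[\Q_p(\mu_p):\Q_p]=p-1>[F_v:\Q_p]$ when $p\geq5$ (as $[F_v:\Q_p]\leq2$), whereas for $p=3$ it would force $F_v=\Q_3(\sqrt{-3})$, a ramified extension of $\Q_3$, contradicting that $p$ is unramified in $F$. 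So the map in (ii) is $0\to0$, an isomorphism, and $F$ is $p$-rational if and only if $H\subseteq\tilde F$.

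Next I would establish the equivalence $H\subseteq\tilde F\iff H\subseteq F^{\mathrm{ac}}$ alluded to before the statement, $F^{\mathrm{ac}}$ being the anticyclotomic $\Z_p$-extension. The inclusion $F^{\mathrm{ac}}\subseteq\tilde F$ being clear, only the forward implication needs work. Since $F/\Q$ is Galois, the $p$-Hilbert class field $H$ is Galois over $\Q$, and complex conjugation $\sigma$ acts on $\gal(H/F)\cong A_F$ by inversion, because $\mathfrak a\sigma(\mathfrak a)=(N_{F/\Q}\mathfrak a)$ is principal for every ideal $\mathfrak a$ of $o_F$. On the other hand $\gal(\tilde F/F)\cong\Z_p^{2}$ (Leopoldt's conjecture being known for the abelian field $F$), and since $p$ is odd this is the direct sum of the $\pm1$-eigenspaces $M^{+},M^{-}$ of $\sigma$, each $\cong\Z_p$, with $F^{\mathrm{cyc}}=\Q^{\mathrm{cyc}}F$ the fixed field of $M^{-}$ (its layers ramified above $p$) and $F^{\mathrm{ac}}$ the fixed field of $M^{+}$. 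If now $H\subseteq\tilde F$, the restriction $\gal(\tilde F/F)\twoheadrightarrow\gal(H/F)$ is a well-defined $\sigma$-equivariant surjection; as $\sigma$ acts by $-1$ on the target and $p$ is odd, $M^{+}$ maps to $0$, so $M^{+}\subseteq\gal(\tilde F/H)$, that is $H\subseteq F^{\mathrm{ac}}$. Hence $F$ is $p$-rational precisely when $H\subseteq F^{\mathrm{ac}}$.

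The two consequences then drop out. If $p\nmid h_F$, then $A_F=0$, so $H=F\subseteq F^{\mathrm{ac}}$ and $F$ is $p$-rational; this is (i). If $F$ is $p$-rational, then $H\subseteq F^{\mathrm{ac}}$, so $A_F\cong\gal(H/F)$ is a quotient of $\gal(F^{\mathrm{ac}}/F)\cong\Z_p$, hence cyclic; this is (ii). The step I expect to demand the most care is the forward implication $H\subseteq\tilde F\Rightarrow H\subseteq F^{\mathrm{ac}}$: one must rule out $H$ sitting diagonally in the $\Z_p^2$-extension $F^{\mathrm{cyc}}F^{\mathrm{ac}}$, which genuinely uses that $H/\Q$ is Galois, that the $\sigma$-actions on $\gal(H/F)$ and $\gal(F^{\mathrm{cyc}}/F)$ have opposite signs (equivalently, $H/F$ is unramified while $F^{\mathrm{cyc}}/F$ is ramified above $p$), and that $p$ is odd so that $\Z_p[\gal(F/\Q)]\cong\Z_p\times\Z_p$; the rest is routine once $\Q(\sqrt{-3})$ has been set aside via the ramification hypothesis.
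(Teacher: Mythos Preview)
Your proof is correct and follows essentially the same route as the paper: reduce to Proposition~\ref{characterisation}, observe that conditions (ii) and (iii) there are automatic under the hypothesis (ruling out $\Q(\sqrt{-3})$ when $p=3$), and then identify condition (i) with $H\subseteq F^{\mathrm{ac}}$ using that $H/\Q$ is Galois together with the $\sigma$-eigenspace decomposition of $\gal(\tilde F/F)\cong\Z_p^2$. The only cosmetic difference is in the justification that $H$ has no cyclotomic component: the paper argues via ramification (the layers of $F^{\mathrm{cyc}}/F$ are ramified above $p$ while $H/F$ is not), whereas you argue directly that $\sigma$ acts by $-1$ on $A_F$ via $\mathfrak a\,\sigma(\mathfrak a)=(N_{F/\Q}\mathfrak a)$ --- and you yourself note these are equivalent.
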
 

Many examples exist where the imaginary quadratic number field  $F$  is $p$-rational with a non-trivial  $p$-class group. 
For instance  $\Q(\sqrt{-23})$,  whose class group is of order  $3$,  turns out to be  $3$-rational. Another example is provided by  $\Q(\sqrt{-47})$  which is $5$-rational and whose class group is of order  $5$. 
\vspace{1mm}

%Let us now treat the remaining case, namely when  $p=3$ and  is ramified in the quadratic field  $F=\Q(\sqrt{d})$  where  the integer  $d$  is square-free and a multiple of  $3$. 
Consider now the remaining case when  $p=3$ and  is ramified in the quadratic field  $F=\Q(\sqrt{d})$  (thus the integer  $d$  is square-free and a multiple of  $3$). Then locally, we have   $\Q_3(\sqrt{d})= \Q_3(\sqrt{-3})$,  precisely when  $-d/3$  is a principal unit in the local field  $\Q_3$  which is equivalent to  $d \equiv -3 \pmod 9$. So in this case, again because of  Condition (ii) in Proposition \ref{characterisation}, the field  $F$  is  $3$-rational if and only if  $F=\Q(\sqrt{-3})$.  If instead   $d \not \equiv -3 \pmod 9$, then  Condition (ii) in Proposition \ref{characterisation} is automatically valid and we obtain the same result as before: 
\begin{cor} 
\label{cor4} 
Let  $F=\Q(\sqrt{d})$  be a quadratic field, with  $d \neq -3$ square-free divisible by $3$. \\ 
(i)  If  $d \equiv -3 \pmod 9$  then  $F$  is not  $3$-rational. \\
(ii) If  $d \not \equiv -3 \pmod 9$  and  $d>0$,  then $F$  is  $3$-rational precisely when  $3 \nmid h_F$  and  the fundamental unit of  $F$  is not a third power locally at a  $3$-adic prime  $v$. \\
(iii)  If  $d \not \equiv -3 \pmod 9$  and   $d<0$,  then  $F$  is  $3$-rational   
precisely when the $3$-Hilbert class field  $H$  is contained in the anticyclotomic  $\Z_3$-extension  of  $F$.    
In particular, if  $3$  does not divide the class number  $h_F$, then   $F$  is  $3$-rational.  Also, as before, if $F$  is  $3$-rational then  $A_F$, the  $3$-primary part of the class group of  $F$,  is at most cyclic. 
\end{cor}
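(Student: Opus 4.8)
The plan is to deduce the statement from the purely algebraic characterisation of $3$-rationality given in Proposition~\ref{characterisation}, specialised to the prime $p=3$, together with the two corollaries already obtained in the unramified case (Corollaries~\ref{cor2} and~\ref{cor3}). The one genuinely new feature is that the ramification index $e=2$ of $3$ in $F/\Q$ equals $p-1$, so condition~(ii) of Proposition~\ref{characterisation} can no longer be discarded for free and must be examined directly; this examination is exactly the local computation sketched in the paragraph preceding the statement, and it is the only delicate point.

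I would begin by recording that, since $d$ is square-free and divisible by $3$, we may write $d=3d'$ with $3\nmid d'$, so that $3$ is totally ramified in $F/\Q$, there is a single prime $v$ of $F$ above $3$, and $F_v=\Q_3(\sqrt d)$ is the ramified quadratic extension of $\Q_3$. Because $d\neq-3$ we have $\Q(\mu_3)=\Q(\sqrt{-3})\not\subseteq F$, whence $\mu_3(F)=1$; thus Proposition~\ref{characterisation} says that $F$ is $3$-rational precisely when (i)~$H\subseteq\tilde F$, (ii)~$\mu_3(F_v)=1$, and (iii)~the natural map $U_F/3\to U_v/3$ is injective.

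The crux is condition~(ii). One has $\mu_3\subseteq F_v$ if and only if $\Q_3(\sqrt{-3})\subseteq\Q_3(\sqrt d)$, hence (both fields being quadratic over $\Q_3$) if and only if $\Q_3(\sqrt d)=\Q_3(\sqrt{-3})$, i.e.\ if and only if $-3d$ is a square in $\Q_3^{\times}$; since the $3$-adic valuation of $-3d$ equals $2$, this amounts to $-d/3=-d'$ being a square in $\Z_3^{\times}$, which holds precisely when $-d'\equiv1\pmod3$, that is, when $d\equiv-3\pmod9$. Therefore condition~(ii) fails exactly when $d\equiv-3\pmod9$, which already proves part~(i). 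For the remaining parts we henceforth assume $d\not\equiv-3\pmod9$, so that condition~(ii) holds automatically and the analysis is formally the one carried out in the unramified case.

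It then only remains to split according to the sign of $d$. If $d>0$, then $\tilde F$ is the cyclotomic $\Z_3$-extension of $F$, which is totally ramified at $v$ and so admits no unramified subextension other than $F$; hence~(i) is equivalent to $H=F$, i.e.\ to $3\nmid h_F$. Moreover $U_F/3$ is cyclic of order $3$, generated by the image of the fundamental unit $\varepsilon$ (the roots of unity $\pm1$ dying modulo cubes), so~(iii) says precisely that $\varepsilon$ is not a cube in $F_v$; this yields part~(ii). If $d<0$, then $U_F=\{\pm1\}$ (since $3\mid d$ rules out $F=\Q(i)$ and $F=\Q(\sqrt{-3})$), whence $U_F/3=1$ and~(iii) is automatic, so that $F$ is $3$-rational if and only if~(i) holds; since the layers of the cyclotomic $\Z_3$-extension of $F$ are ramified at $v$, condition~(i) is equivalent to $H$ being contained in the anticyclotomic $\Z_3$-extension of $F$, exactly as in Corollary~\ref{cor3}, and the two ``in particular'' assertions follow verbatim from that corollary (if $3\nmid h_F$ then $H=F\subseteq\tilde F$; if $F$ is $3$-rational then $A_F=\gal(H/F)$ is a quotient of the Galois group of the anticyclotomic $\Z_3$-extension of $F$, which is isomorphic to $\Z_3$, hence $A_F$ is cyclic). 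The only step requiring genuine care is the local square computation underlying condition~(ii); everything else is straightforward bookkeeping against Proposition~\ref{characterisation} and Corollaries~\ref{cor2} and~\ref{cor3}.
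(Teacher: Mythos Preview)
Your proof is correct and follows essentially the same approach as the paper: both arguments specialise Proposition~\ref{characterisation} to $p=3$, identify the failure of its condition~(ii) with the local equality $\Q_3(\sqrt d)=\Q_3(\sqrt{-3})$, i.e.\ with $d\equiv-3\pmod 9$, and then, once that case is excluded, reduce parts~(ii) and~(iii) to the reasoning already carried out for Corollaries~\ref{cor2} and~\ref{cor3}. Your write-up is somewhat more explicit than the paper's (which simply says ``we obtain the same result as before''), but there is no substantive difference in strategy.
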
 

Let  $F=\Q(\sqrt{d})$  be a quadratic field, with  $d\neq -3$ square-free and   $F'=\Q(\sqrt{-3d})$.  Assume that  $d \not \equiv -3 \pmod 9$ which is equivalent to  $3$ not  splitting in $F'$. Suppose first that  $d>0$, we have  
$$ L_3(1, \chi_{{}_F}) \equiv L_3(1-1,\chi_{{}_F})=(1-\chi_{{}_{F'}}(3))L(1-1,\chi_{{}_{F'}})=(1-\chi_{{}_{F'}}(3))\frac{2}{w} h_{F'} \pmod 3, $$  
where $L(\quad ,\chi_{{}_{F'}})$  is the Dirichlet L-function associated to  $\chi_{{}_{F'}}$ and $w=2$ except when $d=3$, in which case $w=4$.  
%On the other hand  $L(1-1,\chi_{{}_{F'}})=-B_{1,\chi_{{}_{F'}}}=h_{F'}$. 
Since $3$ does not split in $F'$,  the factor  $(1-\chi_{{}_{F'}}(3))$ does not vanish and, by Corollary \ref{Lp is unit}, $F$ is $3$-rational precisely when $3\nmid h_{F'}$ (this last equivalence can also be seen using  \cite[Theorem 2]{Browkin85} since, as mentioned before, $F$  is   $3$-rational precisely when  $3$  does not divide the order of the tame kernel $K_2(o_F)$). Now if  $d<0$,  the  $3$-rationality of  $F$  is equivalent to the triviality of  $A'_{F'}$,  the  $3$-primary part of the $3$-class group of  $F'$  \cite[Theorem 4.1]{Fujii08}. Finally, the non splitting of $3$ in  $F'$ also implies that  $A'_{F'} = A_{F'}$  and we obtain the following interesting relationship between the  $3$-rationality of  $F$  and the class number of its mirror quadratic field  (see also \cite[Corollary 4.2]{Greenberg16} and the subsequent paragraph).

%Let  $F=\Q(\sqrt{d})$  be a quadratic field, with  $d$ square-free and   $F'=\Q(\sqrt{-3d})$. 
%Assume that  $d \not \equiv -3 \pmod 9$  so that  $F$  is, according to the above Corollary \ref{cor4}, susceptible to be  $3$-rational. 
%As mentioned before, 
%%The  $3$-rationality of  $F$  can be read through the order of the tame kernel. Namely  
%$F$  is   $3$-rational precisely when  $3$  does not divide the order of  $K_2(o_F)$.  If  $d>0$,  the last condition is equivalent to  $3\nmid h_{F'}$  \cite[Theorem 2]{Browkin85}.  If instead  $d<0$,  the  $3$-rationality of  $F$  is equivalent to the triviality of  $A'_{F'}$,  the  $3$-primary part of the $3$-class group of  $F'$  \cite[Theorem 4.1]{Fujii08}.  
%Now since $d \not \equiv -3 \pmod 9$,  it is not hard to see that the prime  $3$  does not split in  $F'$. 
%This, in particular, implies that  $A'_{F'} = A_{F'}$  and we obtain the following interesting relationship between the  $3$-rationality of  $F$  and the class number of its mirror quadratic field  
%%the prime  $3$  ramifies in  $F'$  unless  $d \equiv 3 \pmod 9$  (recall that $d$  is assumed not to be   $\equiv -3 \pmod 9$). 
%%Summarising, we have  
%(see also \cite[Corollary 4.2]{Greenberg16} and the subsequent paragraph). 

\begin{pro} 
\label{mirror} 
Let  $F=\Q(\sqrt{d})$  be a quadratic field, with  $d \neq -3$ square-free and   $F'=\Q(\sqrt{-3d})$. \\
(i)  If  $d \equiv -3 \pmod 9$  then  $F$  is not  $3$-rational. \\
(ii) If  $d \not \equiv -3 \pmod 9$,  then $F$  is  $3$-rational precisely when  $A_{F'}=0$. 
\end{pro}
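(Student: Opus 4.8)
The plan is to derive Proposition \ref{mirror} directly from the discussion immediately preceding it, which already does essentially all of the work; the proof is mostly a matter of organizing the two cases $d>0$ and $d<0$ and invoking the right earlier results.

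First I would dispose of part (i): if $d \equiv -3 \pmod 9$, then locally $\Q_3(\sqrt d) = \Q_3(\sqrt{-3})$, so by Corollary \ref{cor4}(i) (equivalently, the failure of Condition (ii) in Proposition \ref{characterisation}) the field $F$ is not $3$-rational, regardless of the arithmetic of $F'$. This is immediate and needs no further argument.

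For part (ii), assume $d \not\equiv -3 \pmod 9$, which, as noted, is exactly the condition that $3$ does not split in $F' = \Q(\sqrt{-3d})$. I would split into the two sign cases. When $d>0$: by Corollary \ref{Lp is unit}, $F$ is $3$-rational iff $L_3(1,\chi_{{}_F})$ is a $3$-adic unit; the congruence $L_3(1,\chi_{{}_F}) \equiv (1-\chi_{{}_{F'}}(3))\,\frac{2}{w}\,h_{F'} \pmod 3$ displayed above, together with the fact that the Euler factor $(1-\chi_{{}_{F'}}(3))$ is a $3$-adic unit because $3$ does not split in $F'$ (and $w \in \{2,4\}$ is prime to $3$), shows that $L_3(1,\chi_{{}_F})$ is a unit iff $3 \nmid h_{F'}$, i.e. iff $A_{F'}=0$. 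When $d<0$: by the cited result \cite[Theorem 4.1]{Fujii08} (reconciled with Corollary \ref{cor3}), the $3$-rationality of $F$ is equivalent to the triviality of $A'_{F'}$, the $3$-primary part of the $3$-class group of $F'$; and since $3$ does not split in $F'$, the $3$-class group and the class group of $F'$ have the same $3$-part, so $A'_{F'}=A_{F'}$ and the condition becomes $A_{F'}=0$. In both cases $F$ is $3$-rational precisely when $A_{F'}=0$, which is the assertion.

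The only genuinely delicate point — and the one I would be most careful about — is the $L$-function congruence used in the $d>0$ case: namely that $L_3(1,\chi_{{}_F}) \equiv L_3(0,\chi_{{}_F}) \pmod 3$ (Kummer-type congruence for the $3$-adic $L$-function along $s=1-\omega^0\mapsto s=0$), combined with the interpolation formula $L_3(0,\chi_{{}_F}) = (1-\chi_{{}_{F'}}(3))\,L(0,\chi_{{}_{F'}})$ coming from the factorization $\chi_{{}_F}\omega^{-1}$-style twist identifying $\chi_{{}_F}$ at $s=0$ with $\chi_{{}_{F'}}$ (here $\chi_{{}_{F'}}$ is the odd character, and $L(0,\chi_{{}_{F'}}) = \frac{2}{w}h_{F'}$ by the classical class number formula for imaginary quadratic fields). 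One must check the Euler factor is attached to the correct prime and is a unit, and track the harmless constant $w$. Everything else is bookkeeping, so I do not expect any further obstacle.
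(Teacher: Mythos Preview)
Your proposal is correct and follows essentially the same approach as the paper: the paper's proof is precisely the discussion immediately preceding the proposition, which handles (i) via Corollary~\ref{cor4}(i), handles (ii) for $d>0$ via the congruence $L_3(1,\chi_{{}_F})\equiv (1-\chi_{{}_{F'}}(3))\frac{2}{w}h_{F'}\pmod 3$ combined with Corollary~\ref{Lp is unit}, and handles (ii) for $d<0$ via \cite[Theorem~4.1]{Fujii08} together with $A'_{F'}=A_{F'}$ when $3$ does not split in $F'$. Your identification of the Kummer-type congruence and the tracking of the Euler factor and the constant $w$ are exactly the delicate points the paper glosses over, so if anything your write-up is slightly more explicit.
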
 

According to  \cite[Theorem A]{Wiles15}, 
%with $S_{-}=\lbrace 3\rbrace$
there exist infinitely many imaginary quadratic fields  $\Q(\sqrt{-d})$  in which  $3$  is inert and with class number prime to $3$.  In particular, these fields  $\Q(\sqrt{-d})$  are  $3$-rational. 
Since  $3d \not \equiv -3 \pmod{9}$, the fields  $\Q(\sqrt{3d})$ (and then also  $\Q(\sqrt{-3}, \sqrt{-d}))$   are  $3$-rational by Proposition \ref{mirror}. 
Hence, we have the following 
\begin{cor}  
\label{infinitely bi-quadratic 3-rational} 
There exist infinitely many imaginary bi-quadratic 3-rational number fields.
\end{cor}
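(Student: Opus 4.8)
The plan is to assemble the family directly from \cite[Theorem A]{Wiles15} together with the mirror-field criterion of Proposition \ref{mirror}, using the fact that a multi-quadratic field is $p$-rational exactly when all of its quadratic subfields are. First I would fix, via \cite[Theorem A]{Wiles15}, an infinite set $\mathcal{D}$ of positive square-free integers $d$ such that $3$ is inert in $\Q(\sqrt{-d})$ and $3\nmid h_{\Q(\sqrt{-d})}$. Since $3$ is not inert in $\Q(\sqrt{-3})$ (it ramifies), we have $3\notin\mathcal{D}$, so for every $d\in\mathcal{D}$ the three quadratic fields $\Q(\sqrt{-3})$, $\Q(\sqrt{-d})$, $\Q(\sqrt{3d})$ are pairwise distinct and $K_d:=\Q(\sqrt{-3},\sqrt{-d})$ is a genuine (and, being a CM field, totally imaginary) bi-quadratic field whose three quadratic subfields are exactly these three.

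Next I would check that each of these three subfields is $3$-rational. The field $\Q(\sqrt{-3})=\Q(\mu_3)$ is $3$-rational because $3$ is a regular prime and $\Q(\mu_3)^+=\Q\subseteq\Q(\mu_3)$, so that $3$-rationality and $3$-regularity coincide for it (Section 2). The field $\Q(\sqrt{-d})$ is $3$-rational by Corollary \ref{cor3}(i), since $3$ is unramified in it and does not divide $h_{\Q(\sqrt{-d})}$. For $\Q(\sqrt{3d})$ I would invoke Proposition \ref{mirror}: its mirror field is $\Q(\sqrt{-3\cdot 3d})=\Q(\sqrt{-9d})=\Q(\sqrt{-d})$, and the inertness of $3$ in $\Q(\sqrt{-d})$ forces $-d\equiv 2\pmod 3$, hence $d\equiv 1\pmod 3$ and $3d\equiv 3\not\equiv -3\pmod 9$; Proposition \ref{mirror}(ii) then gives the $3$-rationality of $\Q(\sqrt{3d})$ because $A_{\Q(\sqrt{-d})}=0$.

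From here I would conclude that $K_d$ itself is $3$-rational, since a multi-quadratic field is $p$-rational precisely when all of its quadratic subfields are (for $p$ odd this is \cite[Proposition 3.6]{Greenberg16}; the totally real case was recalled just after Corollary \ref{Lp is unit}). Finally, distinct $d\in\mathcal{D}$ give distinct fields $K_d$, because $K_d$ has exactly two imaginary quadratic subfields, one of them being $\Q(\sqrt{-3})$, so $\Q(\sqrt{-d})$ --- and hence $d$ --- is recovered from $K_d$. As $\mathcal{D}$ is infinite, this yields infinitely many imaginary bi-quadratic $3$-rational fields. The computations are light; the only point requiring care is the local congruence $3d\not\equiv -3\pmod 9$ needed before applying Proposition \ref{mirror} to $\Q(\sqrt{3d})$, which I expect to be the (minor) crux, and it is settled precisely by the remark that inertness of $3$ in $\Q(\sqrt{-d})$ pins down $d$ modulo $3$. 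All the genuine arithmetic input is already packaged in \cite[Theorem A]{Wiles15} and Proposition \ref{mirror}.
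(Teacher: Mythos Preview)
Your argument is correct and follows essentially the same route as the paper: invoke \cite[Theorem A]{Wiles15} to produce infinitely many $\Q(\sqrt{-d})$ with $3$ inert and $3\nmid h_{\Q(\sqrt{-d})}$, deduce from the inertness that $3d\not\equiv -3\pmod 9$ so that Proposition~\ref{mirror} applies to $\Q(\sqrt{3d})$, and conclude that $\Q(\sqrt{-3},\sqrt{-d})$ is $3$-rational via the reduction to quadratic subfields. You have simply made explicit several points the paper leaves to the reader (the genuineness of the bi-quadratic field, the $3$-rationality of $\Q(\sqrt{-3})$, the congruence check, and the injectivity $d\mapsto K_d$).
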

Now a multi-quadratic field is  $p$-rational precisely when all its quadratic subfields are  \cite[Proposition 3.6]{Greenberg16}. 
Therefore combining Corollary \ref{cor3} (i), Corollary \ref{cor4} (iii) and Proposition \ref{mirror}, 
%we see that the degree of a multi-quadratic  $3$-rational field which contains  $\sqrt{-3}$  can not be arbitrarily large. Namely:  
we obtain the following characterisation of multi-quadratic  (hence bi-quadratic) $3$-rational fields containing   $\sqrt{-3}$. 
\begin{cor} 
\label{p=3 multiquadratic} 
Let  $F$  be a real multi-quadratic  $3$-rational field. 
Then  $F(\sqrt{-3})$  is  $3$-rational precisely when  $F$  does not contain any $\sqrt{d}$  with  $d \equiv 1 \pmod 3$.  
In particular a multi-quadratic  $3$-rational field which contains  $\sqrt{-3}$ is at most bi-quadratic. 
%(i) The degree over  $\Q$ of a multi-quadratic  $3$-rational field which contains  $\sqrt{-3}$ is at most  $4$. \\
%(ii) $\Q(\sqrt{-1}, \sqrt{-3})$  is the only multi-quadratic  $3$-rational field which contains both $\sqrt{-1}$ and   $\sqrt{-3}$. 
\end{cor}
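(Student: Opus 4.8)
The plan is to invoke the criterion recalled just above---that a multi-quadratic field is $3$-rational precisely when every one of its quadratic subfields is \cite[Proposition 3.6]{Greenberg16}---and then read off the quadratic subfields of $F(\sqrt{-3})$. Since $F$ is real, its quadratic subfields are the fields $\Q(\sqrt e)$ with $e>0$ square-free and $\sqrt e\in F$, all $3$-rational by hypothesis; passing to $F(\sqrt{-3})$ adjoins in addition the field $\Q(\sqrt{-3})=\Q(\mu_3)$, which is $3$-rational (it is $3$-regular since $3$ is a regular prime), together with the mirror fields $\Q(\sqrt{-3e})$, one for each such $e$. Hence $F(\sqrt{-3})$ is $3$-rational if and only if $\Q(\sqrt{-3e})$ is $3$-rational for every quadratic subfield $\Q(\sqrt e)$ of $F$, and everything reduces to deciding when this holds.

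For the ``only if'' implication, suppose $\sqrt d\in F$ with $d\equiv1\pmod 3$. Replacing $d$ by its square-free part (still $\equiv1\pmod3$, prime to $3$, and positive because $F$ is real), the integer $-3d$ is square-free, divisible by $3$, different from $-3$, and $\equiv-3\pmod9$; so Corollary~\ref{cor4}(i) shows $\Q(\sqrt{-3d})$ is not $3$-rational, whence $F(\sqrt{-3})$ is not $3$-rational. For the ``if'' implication, assume $F$ contains no $\sqrt d$ with $d\equiv1\pmod3$, and fix a quadratic subfield $\Q(\sqrt e)\subseteq F$ with $e>0$ square-free; then $e\not\equiv1\pmod3$ and $\Q(\sqrt e)$ is $3$-rational. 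By Proposition~\ref{mirror}(i) we have $e\not\equiv-3\pmod9$, and then Proposition~\ref{mirror}(ii) gives $A_{\Q(\sqrt{-3e})}=0$. The field $\Q(\sqrt{-3e})$ is imaginary. If $3\nmid e$, it is ramified at $3$ and $-3e\not\equiv-3\pmod9$ (because $e\not\equiv1\pmod3$), so Corollary~\ref{cor4}(iii) upgrades $A_{\Q(\sqrt{-3e})}=0$ to $3$-rationality; if $3\mid e$, then $3$ is unramified in $\Q(\sqrt{-3e})$ and Corollary~\ref{cor3}(i) does the same. In either case $\Q(\sqrt{-3e})$ is $3$-rational, and by the reduction above $F(\sqrt{-3})$ is $3$-rational. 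This proves the displayed equivalence.

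For the final assertion, let $K$ be any multi-quadratic $3$-rational field with $\sqrt{-3}\in K$. Then $K$ is not real; set $F=K\cap\mathbb{R}$, so that $[K:F]=2$ and $K=F(\sqrt{-3})$, and $F$ is itself multi-quadratic and $3$-rational (every quadratic subfield of $F$ is one of $K$). By the equivalence just proved, $F$ contains no $\sqrt d$ with $d\equiv1\pmod3$. Encode $F$ by its $\mathbf{F}_2$-space of square classes $V\subseteq\Q^{\times}/(\Q^{\times})^2$, so $[F:\Q]=2^{\dim V}$, and let $V_0\subseteq V$ be the subspace of classes whose square-free representative is prime to $3$, of index at most $2$. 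On $V_0$ the assignment sending $[e]$ to the Legendre symbol $\left(\frac e3\right)\in\{\pm1\}$ is a homomorphism which, by our hypothesis, sends every nonzero element to $-1$; since a product of two such elements would map to $+1$, this forces $\dim V_0\le1$. Moreover, if $\dim V_0=1$ with generator $[a]$ (so $a\equiv2\pmod3$), no class can be adjoined outside $V_0$: such a class has square-free representative $3e_0$ with $3\nmid e_0$, and since $\Q(\sqrt{3e_0})\subseteq F$ is $3$-rational, Corollary~\ref{cor4}(i) forces $e_0\equiv1\pmod3$; but then the product class has square-free representative $3c_0$ with $c_0\equiv a\,e_0\equiv2\pmod3$, so $\Q(\sqrt{3c_0})\subseteq F$ would not be $3$-rational by the same corollary---a contradiction. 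Hence $\dim V\le1$, so $[F:\Q]\le2$ and $[K:\Q]\le4$.

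I expect the crux to be the non-symmetry of Proposition~\ref{mirror}: it expresses the $3$-rationality of $\Q(\sqrt d)$ through the class group of the mirror field $\Q(\sqrt{-3d})$, but it cannot be run backwards to deduce the $3$-rationality of $\Q(\sqrt{-3d})$ without control of $A_{\Q(\sqrt d)}$. The argument sidesteps this by using the mirror statement only to produce the vanishing $A_{\Q(\sqrt{-3e})}=0$ and then appealing to the one-directional implications of Corollaries~\ref{cor3}(i) and~\ref{cor4}(iii), while carefully separating the cases $3\mid e$ and $3\nmid e$ and watching the residue of $e$ modulo $9$; the degree bound is then a short combinatorial coda.
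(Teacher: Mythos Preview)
Your proof is correct and follows exactly the route the paper indicates: the paper simply says the corollary is obtained ``combining Corollary~\ref{cor3}(i), Corollary~\ref{cor4}(iii) and Proposition~\ref{mirror}'', and that is precisely the trio of inputs you use, with Corollary~\ref{cor4}(i) (equivalently Proposition~\ref{mirror}(i)) handling the obstruction when $d\equiv 1\pmod 3$. Your case split $3\mid e$ versus $3\nmid e$ and the check that $-3e\not\equiv -3\pmod 9$ are the details the paper leaves implicit; the combinatorial $\mathbf{F}_2$-space argument for the ``in particular'' clause is more explicit than anything in the paper but is the natural way to extract the degree bound from the first statement.
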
 

An example of such a bi-quadratic field is simply  $\Q(\sqrt{-1}, \sqrt{-3})$. 

%Let now $F=\Q(\sqrt{-d})$ be an imaginary quadratic field in which $3$ inert and its class number is not divisible by $3$, in particular, $F$ is $3$-rational. If $F'=\Q(\sqrt{3d})$, since $3$ inert in $F$, we have $ 3d \not \equiv -3 \pmod{9}$ and by Proposition \ref{mirror}, $F'$ is also $3$-rational. Hence the bi-quadratic field $F(\sqrt{-3})$ is $3$-rational. Combining this result with the existence of infinitely many such imaginary quadratic field $F$ (see for example \cite[Theorem A]{Wiles15} with $S_{-}=\lbrace 3\rbrace$), we have proved the following:
%
%\begin{cor}  
%There exist infinitely many imaginary bi-quadratic 3-rational number fields.
%\end{cor}

%a comparer avec 
%DIVISIBILITY OF ORDERS OF K2 GROUPS ASSOCIATED TO QUADRATIC FIELDS
%IWAO KIMURA (TOYAMA UNIVERSITY)
%thm 7 et Remark 2.11. OK
Finally, to be thorough, let us briefly treat the case of  $p=2$. 
%Introduce now the subgroup of  $p$-hyperprimary elements of the multiplicative group  $F^ {\cdot}$  of non-zero elements: 
%$$V_S(F):= \{ \alpha \in F^{\cdot} /  (\alpha)={\mathfrak a}^p \quad and \quad \alpha \in K_v^p \quad \forall v\vert p  \}$$
%where   ($\alpha$)  is the principal (fractional) ideal generated by   $\alpha$. 
%
%
%As far as the  $2$-rationality of a quadratic field  $F$  is concerned, the situation is completely different from the odd prime case. Recall first the following arithmetic characterisation of  $p$-rationality when the field in question contains the $p$-th roots of unity  $\mu_p$:  
%
%\begin{pro} 
%\label{characterisation2}  
%%(\cite {Movahhedi-Nguyen90,Movahhedi88,Movahhedi90,AM}) 
%Assume that   $\mu_p \subset F$. Then  $F$  is  $p$-rational precisely when the following two conditions are satisfied: \\
%(i) $F$  contains only one  $p$-adic prime; \\
%(ii) the $p$-primary part of the  $(p)$-class group  $A'_F$  is trivial  (for  $p=2$, one has to replace  $A'_F$   by the narrow class group  $A^{'+}_F$). 
%\end{pro} 
As far as the  $2$-rationality of a quadratic field  $F$  is concerned, the situation is completely different from the odd prime case. Recall first that if the  $p$-th roots of unity  $\mu_p \subset F$, then  $F$  is  $p$-rational precisely when $F$  contains only one  $p$-adic prime and the  $p$-primary part of the  $(p)$-class group  $A'_F$  is trivial  \cite [Th\'eor\`eme et D\'efinition 2.1]{Movahhedi-Nguyen90}. For  $p=2$,   the narrow class group  $A^{'+}_F$  has to replace  $A'_F$.

Therefore  $p$-rationality for  $p=2$  amounts to having one dyadic prime and the vanishing of the  $2$-primary part of the narrow  $(2)$-class group   $A^{'+}_F$.  By the genus formula for the class groups "\`a la Chevalley", it should be possible to draw the list of  $2$-rational quadratic fields. But much more is known. In fact, for  $p=2$,  the above two conditions are also equivalent to the vanishing of the positive \'etale cohomology group  $H^2_+(o_F^\prime,\Z_2(i))$ for any integer  $i\geq 2$ 
(See \cite[Proposition 4.8]{AM} whose proof follows the same ideas as those in the proof of  \cite[Proposition 2.6]{Kolster03}  where the case of real number fields is dealt with).  
This vanishing is characterised for arbitrary finite Galois  $2$-extensions of  $\Q$    
\cite[Proposition 6.4]{AM}: 
\begin{pro} 
\label{positive cohomology} 
Let  $F$  be a finite Galois  $2$-extension of  $\Q$ and  $i \geq 2$. Then the positive
cohomology group  $H^2_+(o_F^\prime, \Z_2(i))$  vanishes (in other words,  $F$  is  $2$-rational) exactly when  $F/\Q$  is unramified outside a set of primes $\{2, \infty, \ell\}$ with  $\ell \equiv \pm 3 \pmod  8$. 
\end{pro}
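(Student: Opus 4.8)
The plan is to reduce the statement to the two class‑field‑theoretic conditions that, by the preceding discussion, characterise $2$‑rationality: for a finite Galois $2$‑extension $F/\Q$ one has $H^2_+(o_F^\prime,\Z_2(i))=0$ (for all $i\geq 2$) if and only if $F$ is $2$‑rational, and, since $\mu_2\subset F$ automatically, this holds if and only if $(\alpha)$ $F$ has a single prime above $2$, and $(\beta)$ the $2$‑primary part $A^{'+}_F$ of the narrow $(2)$‑class group of $F$ is trivial. So it suffices to show that a finite Galois $2$‑extension $F/\Q$ satisfies $(\alpha)$ and $(\beta)$ exactly when $F/\Q$ is unramified outside $\{2,\infty,\ell\}$ with $\ell\equiv\pm3\pmod 8$ (the case with no odd prime being subsumed). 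As a preliminary I would record the elementary equivalence that, for an odd prime $\ell$, one has $\ell\equiv\pm3\pmod 8$ if and only if $2$ is inert in $\Q(\sqrt{\ell^*})$ (with $\ell^*=(-1)^{(\ell-1)/2}\ell\equiv1\pmod 4$), if and only if $\ell$ is inert in the cyclotomic $\Z_2$‑extension of $\Q$; this is the $p=2$ analogue of the condition $\ell\not\equiv1\pmod{p^2}$ used for odd $p$ in the text.

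Condition $(\alpha)$ I would handle through the decomposition field. Let $D\leq\gal(F/\Q)$ be the decomposition group of a prime above $2$ and $F^D$ the decomposition field, in which $2$ splits completely. Since $\gal(F/\Q)$ is a $2$‑group, $F^D=\Q$ as soon as $F^D$ contains no quadratic subfield. A quadratic subfield of $F$ in which $2$ splits is unramified at $2$, hence — being unramified outside $\{2,\infty,\ell\}$ — equals $\Q(\sqrt{\ell^*})$, in which $2$ is inert precisely when $\ell\equiv\pm3\pmod 8$. Thus if $F/\Q$ is unramified outside $\{2,\infty,\ell\}$ with $\ell\equiv\pm3\pmod 8$ then $F^D=\Q$ and $(\alpha)$ holds; conversely $(\alpha)$ forbids $F$ from containing $\Q(\sqrt{\ell^*})$ with $\ell\equiv\pm1\pmod 8$, which, together with a genus‑theoretic argument producing a split‑at‑$2$ quadratic subfield of $F$ whenever two distinct odd primes ramify, will force in the converse direction that at most one odd prime ramifies and that it lies in the prescribed class.

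For condition $(\beta)$ in the direction $\Leftarrow$ I would induct on $[F:\Q]$, the case $F=\Q$ being trivial. Choosing a central subgroup $N$ of order $2$ in $\gal(F/\Q)$ and putting $F_1=F^N$, the field $F_1/\Q$ is again a Galois $2$‑extension unramified outside $\{2,\infty,\ell\}$, hence $2$‑rational by the inductive hypothesis; in particular the $2$‑part of its narrow $(2)$‑class number is $1$ and $F_1$ has a single dyadic prime. Applying Chevalley's ambiguous class number formula, in its narrow $(2)$‑version, to the quadratic extension $F/F_1$ expresses $\bigl|(A^{'+}_F)^N\bigr|$ as a quotient whose numerator is the product of the ramification indices in $F/F_1$ of the primes of $F_1$ above $\ell$, and whose denominator is $2$ times the index of the norms from $F$ inside the totally positive $S$‑units of $F_1$ ($S$ the primes above $2$); the point of the step is that, given $(\alpha)$ for $F_1$ and $\ell\equiv\pm3\pmod 8$, these two quantities coincide, so that $(A^{'+}_F)^N=1$ and therefore, $A^{'+}_F$ being a finite $2$‑group acted on by the $2$‑group $N$, $A^{'+}_F=0$. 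For the direction $\Rightarrow$, assuming $F$ $2$‑rational, condition $(\alpha)$ and the decomposition‑field analysis already constrain the quadratic subfields of $F$; to descend the full ramification data I would pass to the maximal abelian subextension $F^{\rm ab}/\Q$, which is again $2$‑rational since $2$‑rationality is inherited by subextensions of $2$‑extensions (see \cite{Movahhedi88}), and finish with Gras's classification of $2$‑rational abelian $2$‑extensions of $\Q$ quoted in the text.

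Two points are where the work really lies. The first is the index computation in the inductive step for $(\beta)$: one must determine exactly which primes of $F_1$ above $\ell$ ramify in $F/F_1$ and evaluate the index of the norms inside the $S$‑units of the totally real or CM field $F_1$, and it is precisely here that the hypotheses "$\ell\equiv\pm3\pmod 8$" and "$F_1$ has one dyadic prime" get consumed. The second is the reduction to the abelian case in the converse: one must show that the odd ramification of $F/\Q$ is already visible in $F^{\rm ab}$ — equivalently that the inertia subgroups at the ramified odd primes are not contained in the Frattini subgroup of $\gal(F/\Q)$ — and I would expect to obtain this from the genus (Chevalley) formula for $A^{'+}_F$ rather than by a group‑theoretic argument alone.
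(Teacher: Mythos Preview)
The paper does not prove this proposition; it is simply quoted from \cite[Proposition 6.4]{AM}, so there is no proof in the present text against which your attempt can be compared. Your outline does follow the strategy the paper alludes to in the paragraph immediately preceding the statement --- reduce $2$-rationality to ``a single dyadic prime together with trivial $2$-part of the narrow $(2)$-class group'' and then control the latter via a Chevalley-type genus formula --- and is in the right spirit, but the two places you yourself flag (the exact index computation in the inductive Chevalley step for $F/F_1$, and the claim that odd ramification is already visible in $F^{\mathrm{ab}}$) are genuine gaps that would need to be carried out in full before this counts as a proof. Whether your route coincides with the actual argument in \cite{AM} cannot be judged from the present paper.
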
 

So, the quadratic  $2$-rational fields are  $\Q(\sqrt{2})$,  $\Q(\sqrt{-1})$,  $\Q(\sqrt{-2})$  as well as those of the form   
$\Q(\sqrt{\ell})$,  $\Q(\sqrt{2\ell})$, $\Q(\sqrt{-\ell})$ and $\Q(\sqrt{-2\ell})$  where  $\ell$  is any odd prime  $\equiv \pm 3 \pmod  8$. Therefore, obviously the only real multi-quadratic  $2$-rational fields turn out to be  bi-quadratic of the form  $\Q(\sqrt{2}, \sqrt{\ell})$ with 
$\ell \equiv \pm 3 \pmod  8$  and  the only tri-quadratic imaginary $2$-rational fields are of the form  $\Q(\sqrt{-1}, \sqrt{2}, \sqrt{\ell})$ with $\ell \equiv \pm 3 \pmod  8$.  
Note that the same list can also be obtained from \cite[Corollary to Theorem 2]{Gras86}.   

By contrast, we have the following conjecture of Greenberg for odd primes  $p$, which motivated this study: 
\begin{con} (\cite[Conjecture 4.8]{Greenberg16})
\label{Greenberg's conjecture} 
For each odd prime number  $p$,  and each positive integer  $t$, there exists a  $p$-rational number field  
$F$ which is Galois over  $\Q$  with  $\gal(F/\Q) \cong (\Z/2 )^t$. 
\end{con}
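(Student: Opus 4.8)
Greenberg's conjecture is still open, so the following is a strategy rather than a proof, with an indication of where it would currently break down. The first step is to pass to quadratic subfields. By Corollary~\ref{Lp is unit} together with the product formula~(\ref{product of p-adic L}), a real multi-quadratic field $F$ whose group of Dirichlet characters is $X\cong(\Z/2)^{t}$ is $p$-rational exactly when $L_{p}(1,\chi)$ is a $p$-adic unit for each of the $2^{t}-1$ non-trivial $\chi\in X$; equivalently, exactly when every one of the $2^{t}-1$ quadratic subfields of $F$ is $p$-rational. Thus Conjecture~\ref{Greenberg's conjecture} is equivalent to: for each odd $p$ and each $t\ge 1$ there exist positive fundamental discriminants $d_{1},\dots,d_{t}$, prime to $p$ and independent in $\Q^{\times}/(\Q^{\times})^{2}$, such that $L_{p}\bigl(1,\prod_{i\in I}\chi_{d_{i}}\bigr)$ is a $p$-adic unit for every non-empty $I\subseteq\{1,\dots,t\}$. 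Using Corollary~\ref{cor2} (and Corollary~\ref{cor4} when $p=3$), this becomes a classical statement: $p\nmid h_{\Q(\sqrt{D_{I}})}$ for all $I$, where $D_{I}$ is the squarefree kernel of $\prod_{i\in I}d_{i}$, together with the local requirement that the fundamental unit of $\Q(\sqrt{D_{I}})$ not be a $p$-th power at the prime(s) above $p$.

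I would attempt to construct $d_{1},\dots,d_{t}$ by induction on $t$. The base case $t=1$ is exactly Theorem~\ref{infinitely many real quadratic}, obtained from Coates' formula~(\ref{coates}) and Ono's Theorem~\ref{Ono}; in fact it produces infinitely many real quadratic $p$-rational fields. For the inductive step one wants the stronger conclusion that a positive proportion of admissible $d$ exist, so as to leave room to continue: given a real multi-quadratic $p$-rational field $F=F_{t}$ with character group $X_{t}$, one seeks a positive fundamental discriminant $d$, prime to $p$ and to the conductor of $F$, such that $L_{p}(1,\chi_{d})$ and $L_{p}(1,\chi\chi_{d})$ are simultaneously $p$-adic units for all $2^{t}-1$ non-trivial $\chi\in X_{t}$ --- that is, $2^{t}$ fresh non-vanishing conditions imposed at once, beyond those already holding inside $F$. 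The local unit condition on each $\Q(\sqrt{D_{I}})$ is a congruence condition at $p$ and can be guaranteed by a Chebotarev constraint on the splitting of $p$, chosen independently of the class-number conditions, so it is not where the difficulty lies.

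The engine for the inductive step should be a \emph{simultaneous non-vanishing modulo $p$} theorem for the family of quadratic twists $\{L_{p}(1,\psi\chi_{d})\}_{\psi\in X_{t}}$ as $d$ ranges over positive fundamental discriminants. By the congruence $L_{p}(1,\psi)\equiv L_{p}(0,\psi)\pmod p$ --- valid because $L_{p}(\,\cdot\,,\psi)$ is an Iwasawa power series with $\Z_{p}[\psi]$-coefficients evaluated at $(1+p)^{s}-1$ and $(1+p)-1\equiv 0\pmod p$ --- combined with $L_{p}(0,\psi)=-\bigl(1-\psi\omega^{-1}(p)\bigr)B_{1,\psi\omega^{-1}}$, where $\omega$ is the mod-$p$ Teichm\"uller character, each of these conditions becomes the non-divisibility by $p$ of a generalized Bernoulli number $B_{1,\psi\chi_{d}\omega^{-1}}$ (for $p=3$ this is, up to sign and an explicit factor, the class number of the mirror quadratic field, exactly as in the congruence displayed just before Proposition~\ref{mirror}). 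So one is reduced to: for each $t$, are there infinitely many fundamental discriminants $d$ for which $2^{t}$ prescribed quadratic twists all have $B_{1,\cdot}$ prime to $p$ (equivalently $L_{p}(1,\cdot)$ a $p$-adic unit)? For a single twist this is classical (non-vanishing of $B_{1,\chi}$ in quadratic families, arguments of Washington and Sinnott on $\mu$-invariants, or analytic methods for quadratic twists). The crux --- and the reason the conjecture is open --- is the \emph{uniformity in $t$}: the number of conditions grows like $2^{t}$, and one must show they are sufficiently independent that a positive proportion of $d$ satisfy all of them at once. Whether this is approached through equidistribution of $p$-adic $L$-values, through genus theory and Kuroda's class-number formula for multi-quadratic fields, or through Cohen--Lenstra heuristics, the expected answer is yes, but supplying a proof is exactly what is missing; this is the main obstacle. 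The reductions above and the base case are, by contrast, routine given the machinery recalled in Section~2.
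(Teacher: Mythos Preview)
The statement you are asked to prove is a \emph{conjecture}, and the paper does not prove it: it is stated (with attribution to Greenberg) as an open problem motivating the paper, and the paper's contributions are the partial cases $t=1$ (Theorem~\ref{infinitely many real quadratic}), $t=2$ (Propositions~\ref{imaginary bi-quadratic} and~\ref{real bi-quadratic}), and a conditional $t=3$ result valid for $p<192{.}699{.}943$ (Section~4.3). You correctly recognise this in your opening sentence, so in that sense there is no discrepancy to report: your proposal is honest about being a strategy, and there is no ``paper's proof'' to compare it against.

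That said, two remarks on the strategy itself. First, your reduction via Corollary~\ref{Lp is unit} and formula~(\ref{product of p-adic L}) treats only the \emph{real} multi-quadratic case, whereas Conjecture~\ref{Greenberg's conjecture} is stated for arbitrary multi-quadratic fields (and indeed the application to Galois representations in \cite{Greenberg16} uses imaginary ones). The reduction to quadratic subfields still holds in general via \cite[Proposition~3.6]{Greenberg16}, but the $p$-adic $L$-function criterion you invoke is specific to even characters; for odd characters the relevant condition is the one in Corollary~\ref{cor3}, and the paper's approach to producing imaginary examples (Proposition~\ref{going up quadratic field}, Corollary~\ref{naito}) is to adjoin an imaginary quadratic extension to a real multi-quadratic $p$-rational field already constructed, controlling the relative class number. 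Second, the obstacle you isolate --- simultaneous $p$-indivisibility of $2^{t}$ twisted Bernoulli numbers --- is indeed the heart of the matter, and nothing in the paper circumvents it; the explicit bi-quadratic and tri-quadratic constructions in Section~4 work because the discriminants are chosen so that fundamental units and class numbers can be bounded directly, an approach that does not obviously scale with $t$.
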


As explained by Greenberg \cite[Remark 6.8]{Greenberg16}, if the above conjecture \ref{Greenberg's conjecture} is true for imaginary multi-quadratic fields for a given odd prime  $p$, then there would exist continuous Galois representations 
$\rho : G_{\Q} \to  GL_n(\Z_p)$  
with an open image for all degrees  $n \geq 4$. 

%Though the Cohen-Lenstra heuristic stipulates a positive density for the real quadratic fields whose class numbers are prime to  $p$, to our knowledge, it is not even known if infinitely many such real quadratic fields exist.  Notice that, in view of the corollaries  \ref{cor2} and   \ref{cor4}, if the above conjecture of Greenberg holds, then these real quadratic $p$-rational fields  provide infinitely many real quadratic fields whose class numbers are prime to  $p$.  

In fact, we know when exactly  $p$-rationality is inherited by an arbitrary Galois  $p$-extension  
(\cite {Movahhedi-Nguyen90,Movahhedi88,Movahhedi90}) and this was the strategy to obtain infinitely many non abelian  number fields satisfying Leopoldt's conjecture at a prime  $p$.  For the above conjecture \ref {Greenberg's conjecture}, it would be interesting to investigate going up properties of  $p$-rationality along quadratic (and more generally prime-to-$p$ degrees)  extensions.

\begin{pro} 
\label{going up quadratic field} 
Let  $p>2$ and  $F$  be a totally real  $p$-rational number field and  $L$  a totally imaginary field which is a quadratic extension of  $F$,  a so-called  C.M. field.  Suppose that  $p\nmid h_L$  and  $\mu_p \not \subset L_w$  for any  $p$-adic prime  $w$   of  $L$. Then  $L$  is also  $p$-rational. 
\end{pro}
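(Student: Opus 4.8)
The plan is to verify the three conditions of Proposition~\ref{characterisation} for the CM field $L$, using the corresponding conditions for $F$ together with the two extra hypotheses $p \nmid h_L$ and $\mu_p \not\subset L_w$. First I would dispose of condition~(ii): since $p$ is odd and $L$ is totally imaginary, $\mu_p(L)$ is either trivial or all of $\mu_p$; the hypothesis $\mu_p \not\subset L_w$ forces $\mu_p \not\subset L$, so $\mu_p(L) = 1$ and the source $\mu_p(L) \to \oplus_{w \mid p} \mu_p(L_w)$ is the zero map between groups the left of which is trivial, hence trivially an isomorphism onto its (necessarily trivial) image in the sense needed — more precisely, both sides vanish, using $\mu_p(L_w) = 1$ for every $w$. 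Next, condition~(i): the $p$-Hilbert class field $H_L$ of $L$ is trivial because $p \nmid h_L$, so $A_L = 0$ and $H_L = L \subseteq \tilde L$ holds vacuously.

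The real content is condition~(iii): the natural map $U_L/p \to \oplus_{w\mid p} U_w/p$ must be injective. Here I would exploit the CM structure. Let $J$ denote complex conjugation, the generator of $\gal(L/F)$. By Dirichlet's unit theorem the unit group $U_L$ and $U_F$ have the same rank (a CM field and its maximal real subfield have the same unit rank), so the index $[U_L : \mu(L) U_F]$ is finite, indeed equal to $1$ or $2$ (Hasse's unit index $Q_L \in \{1,2\}$); and $\mu(L)$ has order prime to $p$ since $\mu_p \not\subset L$. Therefore the induced map $U_F/p \to U_L/p$ is an isomorphism: the cokernel is killed by the prime-to-$p$ quotient $U_L/\mu(L)U_F$ and the kernel vanishes because $U_F$ is saturated in $U_L$ away from $p$-torsion. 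I would then set up the commutative square relating $U_F/p \to \oplus_{v\mid p} U_v/p$ (injective, since $F$ is $p$-rational, by Proposition~\ref{characterisation}(iii) applied to $F$) with $U_L/p \to \oplus_{w\mid p} U_w/p$, via the natural maps $U_v/p \to \oplus_{w\mid v} U_w/p$ on local units induced by $F_v \hookrightarrow L_w$. An element of the kernel over $L$ comes, via the isomorphism $U_F/p \xrightarrow{\sim} U_L/p$, from some $u \in U_F/p$ whose image in $\oplus_w U_w/p$ vanishes; I must deduce that its image in $\oplus_v U_v/p$ already vanishes, and then injectivity over $F$ finishes the argument.

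The main obstacle, and the step to treat with care, is precisely this local descent: showing that if $u \in U_v/p$ dies in $U_w/p$ for the primes $w \mid v$ of $L$, then $u = 0$ in $U_v/p$. If $v$ splits in $L/F$ this is immediate since $F_v = L_w$. If $v$ is inert or ramified, $L_w/F_v$ is a quadratic (hence prime-to-$p$, as $p$ is odd) extension of local fields, and the norm map $N_{L_w/F_v} : U_w/p \to U_v/p$ composed with the inclusion $U_v/p \to U_w/p$ is multiplication by $2$, which is an automorphism of the $p$-group $U_v/p$; hence $U_v/p \to U_w/p$ is injective. This is where the hypothesis $\mu_p \not\subset L_w$ is genuinely needed: it guarantees $U_w/p \cong (\Z/p)^{[L_w:\Q_p]}$ has no extra line coming from $p$-power roots of unity that could obstruct the comparison, and more to the point it is already used to handle condition~(ii); in the local injectivity step the odd-degree/prime-to-$p$ argument suffices. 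Assembling these, condition~(iii) for $L$ follows from that for $F$, and Proposition~\ref{characterisation} yields the $p$-rationality of $L$.
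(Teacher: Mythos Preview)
Your proof is correct and follows essentially the same route as the paper: verify the three conditions of Proposition~\ref{characterisation} for $L$, with the core step being the commutative square that transfers injectivity of $U_F/p \to \oplus_{v\mid p} U_v/p$ to injectivity of $U_L/p \to \oplus_{w\mid p} U_w/p$ via the isomorphism $U_F/p \simeq U_L/p$. The only cosmetic differences are that you obtain this isomorphism through the Hasse unit index $Q_L\in\{1,2\}$ whereas the paper does a direct $\Z/p$-rank count, and you spell out the local injectivity $U_v/p \hookrightarrow \oplus_{w\mid v} U_w/p$ via the norm splitting, which the paper records only as a hooked arrow in its diagram.
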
 

\begin{proof}
Since $p$  is odd, the inclusion  $U_F \subset U_L$  induces a natural injective map  $U_F/p \hookrightarrow U_L/p$. Since  $\mu_p \not \subset L$ by hypothesis,  
$U_F/p$  and  $U_L/p$  both have the same  $\Z/p$-rank  $[F:\Q]-1$. 
Therefore, the above natural injective map is in fact an isomorphism. In other words, the Galois group  $\gal(L/F)$ acts trivially on  $U_L/p$.  Consider now the following commutative diagram
$$\xymatrix{
U_F/p \ar[r] \ar[d]^{\simeq} &  \oplus_{v\vert p} U_v/p \ar@{^{(}->}[d] \\
U_L/p \ar[r] &  \oplus_{v\vert p} \oplus_{w\vert v} U_w/p. \\
}$$
Since  $F$  is supposed  $p$-rational, the top horizontal map is injective by Proposition \ref{characterisation}, therefore the bottom horizontal map is also injective and the  $p$-rationality of  $L$ follows by the same Proposition \ref{characterisation}.  
\end{proof}

%The above proposition \ref{going up quadratic field} can be improved in the following way: 
%The prime  $p$ being odd, the Galois group $\gal(\tilde L/L)$  splits into  
%
%Consider the action of the Galois group  $\gal(L/F)$  on  $\gal(\tilde L/L)$. Denote by  $\gal(\tilde L/L)^+$ the subgroup of fixed points and by $\gal(\tilde L/L)^-$ the subgroup on which  $\gal(L/F)$  acts by inversion. 
%The prime  $p$ being odd, we have  $\gal(\tilde L/L)= \gal(\tilde L/L)^+ \times \gal(\tilde L/L)^-$. 

In the above Proposition \ref{going up quadratic field}, the condition  $p\nmid h_L$  can be replaced by the less restrictive condition that the $p$-Hilbert  class field of  $L$  is contained in   ${\tilde L}$. We also note that, for  $p> 3$,  if the C.M. field $L$  in the above proposition \ref{going up quadratic field} is supposed to be multi-quadratic,  then it is  $p$-rational as soon as  $p\nmid h_L$. 
The same result holds for  $p=3$,  provided that  $L$  does not contain any $\sqrt{d}$, where  $d<0$ and  $ \equiv -3 \pmod 9$.  \\

Let $w_2(F)$ be the order of the Galois cohomology group $H^0(F,\Q/\Z(2))$ with coefficients the $2$-fold Tate-twist of the module $\Q/\Z$. The $p$-part of  $w_2(F)$  is the maximal power $p^m$, such that the Galois group $Gal\,(F(\mu_{p^m})/F)$ has exponent $2$. 
If $F$ is totally real and $p$ is odd, the latter condition is equivalent
to the fact that $F$ contains the maximal real subfield
$\Q(\mu_{p^m})^+$ of $\Q(\mu_{p^m})$. 
%Let  $G_F$  be the absolute Galois group of  $F$  and denote by $w_i(F)$  the order of the Galois cohomology 
%group $H^0(G_F,\Q/\Z(i))$ with coefficients the $i$-fold Tate-twist of 
%the module $\Q/\Z$. The $p$-part of  $w_i(F)$  is the maximal power $p^m$, such that
%the Galois group $Gal\,(F(\mu_{p^m})/F)$ has exponent $i$. 
%If $F$ is totally real and $p$ is odd, the latter condition for  $i=2$  is equivalent
%to the fact that $F$ contains the maximal real subfield
%$\Q(\mu_{p^m})^+$ of $\Q(\mu_{p^m})$. 
Then we have the following 
\begin{cor} 
\label{naito} 
Let  $p \geq 7$ and  $F$  be a totally real multi-quadratic  $p$-rational number field. 
Assume that   $p \nmid w_2(F) \zeta_F(-1)$,  where  $\zeta_F$  is the Dedekind $\zeta$-function of  $F$. Then there exist infinitely many totally imaginary quadratic extensions $L_i$  of  $F$  which are  $p$-rational. 
\end{cor}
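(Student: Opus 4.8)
\emph{Proof proposal.} The plan is to use the going-up criterion of Proposition~\ref{going up quadratic field} to descend the assertion to an indivisibility statement for the relative class numbers of the CM quadratic extensions $F(\sqrt{-\ell})$ of $F$, with $\ell$ a rational prime, and then to invoke an analytic indivisibility theorem of Naito for which the hypothesis $p\nmid w_2(F)\zeta_F(-1)$ supplies exactly the required non-degeneracy.

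First I would record two formal facts. Since $F$ is $p$-rational it satisfies Leopoldt's conjecture at $p$ and condition~(i) of Proposition~\ref{characterisation} holds for $F$; because the ramification index of $p$ in the multi-quadratic extension $F/\Q$ is $1$ or $2$, hence prime to the odd prime $p$, the remarks preceding Proposition~\ref{ramification index} give $p\nmid h_F$. Next, for a prime $\ell$ with $\sqrt{-\ell}\notin F$ put $L=L_\ell:=F(\sqrt{-\ell})$; this is a totally imaginary quadratic extension of $F$ which is again multi-quadratic over $\Q$. For $p\ge 7$ the local hypothesis $\mu_p\not\subset L_w$ of Proposition~\ref{going up quadratic field} holds automatically at every $p$-adic prime $w$ of $L$: the completion $L_w/\Q_p$ is abelian of exponent $2$, whereas $\Q_p(\mu_p)/\Q_p$ is cyclic of degree $p-1>2$, so $\Q_p(\mu_p)\not\subseteq L_w$. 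Hence, by Proposition~\ref{going up quadratic field} and the remark following it (applicable since $L_\ell$ is multi-quadratic and $p\geq 7>3$), $L_\ell$ is $p$-rational as soon as $p\nmid h_{L_\ell}$; writing $h_{L_\ell}=h_F\,h_{L_\ell}^{-}$ with $h_{L_\ell}^{-}$ the relative class number, and using $p\nmid h_F$, it even suffices that $p\nmid h_{L_\ell}^{-}$. The corollary is thus reduced to producing infinitely many primes $\ell$ with $p\nmid h_{L_\ell}^{-}$.

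Here comes the analytic step, which I expect to be the main obstacle. First I would reinterpret the hypothesis: as $F$ is abelian over $\Q$ the Birch--Tate conjecture holds for it (the $p$-part, for odd $p$, by the Mazur--Wiles theorem), so $p\nmid w_2(F)\zeta_F(-1)$ is equivalent to $p\nmid|K_2(o_F)|$, i.e.\ to the $p$-regularity of $F$. The analytic class number formula for the abelian field $L_\ell$ expresses $h_{L_\ell}^{-}$ as $Q_{L_\ell}w_{L_\ell}\prod_{\psi}\bigl(-\tfrac12 B_{1,\psi\chi_{-\ell}}\bigr)$, the product running over the characters $\psi$ in the Dirichlet character group of $F$ and $\chi_{-\ell}$ denoting the odd quadratic character attached to $\Q(\sqrt{-\ell})$; the factor $Q_{L_\ell}w_{L_\ell}$ is prime to $p$ because $p\ge 7$. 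One must therefore show that the product of the generalized Bernoulli numbers $B_{1,\psi\chi_{-\ell}}$ — equivalently, up to sign, of the values $L(0,\psi\chi_{-\ell})$ — is a $p$-adic unit for infinitely many $\ell$. This is precisely a theorem of Naito on the $p$-indivisibility of the relative class numbers of the CM quadratic extensions of a fixed totally real base under the hypothesis $p\nmid w_2(F)\zeta_F(-1)$; its proof proceeds by a density argument combining Kummer-type congruences for these Bernoulli numbers with an averaging input, in the spirit of the theorem of Ono used in Section~3, and it is here — and only here — that the arithmetic hypothesis is genuinely used, to certify the non-vanishing modulo $p$ of the relevant Eisenstein-type series. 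Finally, for all but finitely many of the resulting primes $\ell$ the fields $L_\ell$ are pairwise distinct, which yields infinitely many $p$-rational totally imaginary quadratic extensions $L_i$ of $F$ and completes the argument.
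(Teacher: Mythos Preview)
Your proposal is correct and follows essentially the same route as the paper: deduce $p\nmid h_F$ from the multi-quadratic structure, invoke Naito's indivisibility theorem (which is exactly where the hypothesis $p\nmid w_2(F)\zeta_F(-1)$ enters) to obtain infinitely many CM quadratic extensions $L_i/F$ with $p\nmid h_{L_i}$, check the local condition $\mu_p\not\subset L_w$ using $p\ge 7$, and finish with Proposition~\ref{going up quadratic field}. The paper's proof is much terser---it simply cites \cite[Theorem]{Naito91} as a black box and does not restrict to $L_\ell=F(\sqrt{-\ell})$ or unpack the relative class number formula---whereas you expand on the analytic content of Naito's result and make the Birch--Tate reinterpretation of the hypothesis explicit (the paper defers this remark to the paragraph after the proof).
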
 

\begin{proof}
Since  $F$  is assumed to be multi-quadratic, the odd prime  $p$  ramifies in the layers of the cyclotomic  $\Z_p$-extension of  $F$. Therefore, by Proposition \ref{characterisation} (i),  we have   $p \nmid h_F$. 
Now by  \cite[Theorem] {Naito91}, there exist infinitely many totally imaginary quadratic extensions $L_i$  of  $F$  whose class numbers are primes to  $p$. 
Since  $p$  is supposed $\geq7$, the completion of  $L_i$  at any  $p$-adic prime does not contain the  $p$-th roots of unity and the Corollary follows from Proposition  \ref{going up quadratic field}.   
\end{proof}

The above Corollary  \ref{naito}  also holds for  $p=5$  as soon as  $F$ does not contain any quadratic field  $\Q(\sqrt{d})$,  
where  $d \equiv \pm 5 \pmod {25}$,  since then the completion $F_v$  of  $F$  at any  $p$-adic prime  $v$  would not contain   $\sqrt{5}$  and therefore none of its quadratic extensions could contain  $\mu_5$. 

When  $F$  is totally real and abelian over $\Q$, then 
\begin{equation}\label{birch-tate}
w_2(F) \zeta_F(-1) = \pm {|K_2(o_F)|}
\end{equation}
according to the Birch-Tate conjecture (which is, in this case, a consequence of the Main conjecture in Iwasawa theory proved by Wiles). Hence the above Corollary \ref{naito} in fact applies whenever the totally real multi-quadratic field  $F$  is both $p$-rational and  $p$-regular.  

%Remark that the above Corollary  \ref{naito}  also holds for  $p=5$  provided that $5$  is not a square in  $F$.  As far as  $p=3$ is concerned, we have 

%Let $L/\Q$ be abelian. As a consequence of the dyadic Main Conjecture 
%in Iwasawa-theory
%proved by Wiles [{\bf 18}] for abelian
%fields the Birch-Tate Conjecture is true for $L$ (cf. [{\bf 14}], Appendix A), \ie
%$$\zeta_L(-1) = \pm \frac{|K_2(o_L)|}{w_2(L)},$$
%
%where $w_2(L)$ can be expressed as the order of the Galois cohomology 
%group $H^0(L,\Q/\Z(2))$ with coefficients the $2$-fold Tate-twist of 
%the module $\Q/\Z$. Given an integer $k$, denote by  $\mu_k$ the 
%group of $k$-th roots of unity, then for any prime $p$ the $p$-part 
%of $w_2(L)$ is the maximal power $p^n$, such that
%the Galois group $Gal\,(L(\mu_{p^n})/L)$ has exponent $2$.
%If $L$ is totally real, the latter condition is equivalent
%to the fact that $L$ contains the maximal real subfield
%$\Q(\mu_{p^n})^+$ of $\Q(\mu_{p^n})$ for $p$ odd and the fact that
%$L$ contains the maximal real subfield
%$\Q(\mu_{2^{n-1}})^+$ of $\Q(\mu_{2^{n-1}})$ for $p = 2$.
%Assume now that $E/F$ is a bi-quadratic extension
%of totally real number fields. Since for $m>n$ the extension 
%$\Q(\mu_{p^m})^+/\Q(\mu_{p^n})^+$ is cyclic of degree $p^{m-n}$, we 
%see immediately that $w_2(E)$ and $w_2(F)$ can differ at most by a 
%factor of $2$, and if this is the case, then $w_2(E) = w_2(F_i)$ for 
%precisely one intermediate field $F_i$. In any case we obtain
%$$w_2(F)^2w_2(E) = w_2(F_1)w_2(F_2)w_2(F_3),$$
%and therefore the following Brauer relation for the tame kernels:

\section{The quadratic case} 
%Voir Byeon 2001-2  et  ono99 
There exist infinitely many imaginary quadratic fields whose class numbers are not divisible by a given prime $p$. 
This was a conjecture of Chowla proved by Hartung  \cite {Hartung74}. Since then, many papers have dealt with the non divisibility by  $p$  of the class number of imaginary quadratic fields  $F$  under different conditions. For instance, the same result is proved in  \cite {Horie_Onishi88} or in \cite {Horie90} imposing the behaviour of the prime  $p$ or finite sets of primes in the imaginary quadratic fields in question. In \cite {Kohnen_Ono99}, the authors even show the existence of a lower bound for the number of imaginary quadratic fields with discriminant  $d>-X$   for a large integer  $X$  having class numbers prime to  $p$. 
For more recent treatments see  \cite {Wiles15,Beckwith17}. 

%Many authors studied the non divisibility of the class number of imaginary quadratic fields  $F$  and if we fix the prime  $p$,  there exists infinitely many imaginary quadratic fields whose class numbers are not divisible by $p$ (Chowla's conjecture). 
%This was proved by Hartung 
%See for instance \cite {Hartung74} or the more recent paper \cite {Wiles15}  where, in the imaginary quadratic fields considered, further conditions are imposed on the behaviour of a finite number of rational primes. 

So, according to Corollary \ref{cor3}, for each prime number  $p$  there are infinitely many imaginary quadratic fields which are  $p$-rational. 
%(for instance in \cite {Wiles15},  the prime  $p$ could be chosen unramified in the imagainary quadratic fields so that Corollary \ref{cor3} applies even for  $p=3$). 
Also, we note that the nine imaginary quadratic fields with class number 1 are  $p$-rational for each odd prime  $p$  (in fact they are also  $2$-rational except for  $\Q(\sqrt{-7})$) and it would be interesting to list the (imaginary quadratic) number fields which are  $p$-rational for all odd primes  $p$.  

As far as real quadratic fields  $F$  are concerned, the situation is much harder to handle. 
This is due to the fact that, on one side, in the Dirichlet's class number formula 
\begin{equation}
\label{class number formula}
h_F=   \frac{L(1,\chi_{{}_F}) \sqrt{d_F}}{2R_F},
\end{equation} 
the regulator  $R_F=\log(\varepsilon)$  intervenes so knowledge of the fundamental unit  $\varepsilon$  of  $F$  is required and, on the other hand, for the real quadratic field to be  $p$-rational, the fundamental unit must not be a $p$-th power locally at a prime above  $p$  (Corollaries \ref{cor2} and \ref{cor4}). 

When the odd prime $p \neq 5$,  the  $p$-rationality of  $\Q(\sqrt{5})$  can be connected to  Fibonacci numbers in a particularly pleasant form as explained by Greenberg in  \cite[Corollary 4.5]{Greenberg16}:  let  
$$q: = \left\{
\begin{array}{ll} 
p & \mbox{ if }  p \equiv \pm 1 \pmod 5 ,\\ 
p^2 & \mbox{ if }  p \equiv \pm 2 \pmod 5. 
\end{array}
\right.
$$
Then   $\Q(\sqrt{5})$  is  $p$-rational precisely when  the  $q$-th  Fibonacci number  
$F_q \not \equiv 1 \pmod {p^2}$. 
The Fibonacci numbers are given by the Binet formula 
$$F_q := \frac{\varepsilon_0^q - {\bar \varepsilon_0}^q}{\varepsilon_0 - {\bar \varepsilon_0}},$$
where  $\varepsilon_0:=(1+\sqrt{5})/2$  is the fundamental unit of  $\Q(\sqrt{5})$. 
%The famous Binet formula for the Fibonacci sequence F1 = 1 = F2 , Fn+2 = Fn + Fn+1 is the identity
%?n ? (?1/?)n Fn = �%where ? is the golden ratio (1 + �/2.
Greenberg's proof is extendable similarly to any real quadratic field  $F$  in which the odd prime  $p$  is not ramified. 
Indeed let  $\varepsilon$ be the fundamental unit of  $F$. 
Introduce  $q: =p^f$,  where  $f$  denotes the residue degree of  $p$  in  $F$.  
%$$q: = \left\{
%\begin{array}{ll} 
%p & \mbox{ if }  p \equiv 1 \pmod 4 ,\\ 
%p^2 & \mbox{ if }  p \equiv 3 \pmod 4. 
%\end{array}
%\right.
%$$
Then $\varepsilon^{q-1}$  is a principal unit in the completion  $F_v$  at a  $p$-adic prime  $v$ and is locally a  $p$-th power precisely when it belongs to  $U_v^{(2)}$  \cite [Prop 9, page 219]{Serre62}. 
Therefore,   $\varepsilon$  is locally a  $p$-th power  precisely when  
$\varepsilon^q \equiv \varepsilon \pmod {{\mathfrak p}^2_v}$. 
The same holds for the conjugate  ${\bar \varepsilon}$  which coincides with the inverse $\varepsilon^{-1}$.  
Denote by 
$${\cal F}_n := \frac{\varepsilon^n - {\bar \varepsilon}^n}{\varepsilon - {\bar \varepsilon}}$$ 
the general Fibonacci-type of two-term linear recurrence  
$${\cal F}_{n+2} := (\varepsilon+\bar \varepsilon) {\cal F}_{n+1} - (\varepsilon \bar \varepsilon) {\cal F}_n,$$
with initial terms ${\cal F}_0 =0$  and  ${\cal F}_1 =1 $. 
Accordingly, it is now clear that we have the following corollary which is a consequence of Corollary  \ref{cor2} and the above discussion  inspired by Greenberg's proof of \cite[Corollary 4.5]{Greenberg16}  in the special case of   $\Q(\sqrt{5})$.

\begin{cor} 
\label{Fibonacci} 
Let  $F$  be a real quadratic field in which the odd prime  $p$  does not ramify. 
Then  $F$  is  $p$-rational  precisely when the following two conditions are satisfied: \\
(i) The prime  $p$  does not divide the class number  $h_F$ and \\
(ii) The generalised Fibonacci number  ${\cal F}_q \not \equiv 1 \pmod {p^2}$. 
\end{cor}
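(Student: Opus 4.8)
The plan is to deduce Corollary \ref{Fibonacci} directly from Corollary \ref{cor2} by translating condition (ii) of that corollary --- that the fundamental unit $\varepsilon$ is not a $p$-th power in the completion $F_v$ --- into the congruence ${\cal F}_q \not\equiv 1 \pmod{p^2}$. Since $p$ is odd and unramified in $F$, the hypothesis of Corollary \ref{cor2} is met (whether $p \geq 5$, or $p=3$ and unramified), so $F$ is $p$-rational precisely when $p \nmid h_F$ and $\varepsilon$ is not locally a $p$-th power at a $p$-adic prime $v$. Condition (i) is identical in both statements, so the entire content is the equivalence of the two forms of condition (ii).

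First I would record the local setup: let $v$ be a $p$-adic prime of $F$ with residue degree $f$, and set $q := p^f = Nv$, so the residue field of $F_v$ has $q$ elements and $\varepsilon^{q-1}$ lies in the group of principal units $U_v^{(1)}$. Since $p$ is unramified, the ramification index is $1 < p-1$ when $p \geq 3$ except we must note $p=3$ forces $p-1 = 2 > 1$, so in all cases $e = 1 < p-1$; hence by \cite[Prop 9, page 219]{Serre62} raising to the $p$-th power is an isomorphism $U_v^{(1)} \xrightarrow{\sim} U_v^{(2)}$, and $\varepsilon^{q-1}$ (equivalently $\varepsilon$, up to the prime-to-$p$ exponent $q-1$) is a local $p$-th power if and only if $\varepsilon^{q-1} \in U_v^{(2)}$, i.e.\ $\varepsilon^{q-1} \equiv 1 \pmod{\mathfrak{p}_v^2}$, i.e.\ $\varepsilon^{q} \equiv \varepsilon \pmod{\mathfrak{p}_v^2}$. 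One subtlety: whether $p$ splits or is inert in $F$, the conjugate $\bar\varepsilon$ of $\varepsilon$ satisfies $\bar\varepsilon = \varepsilon^{-1}$ (up to sign, but $\varepsilon^2$ has norm $1$ so we may work with a unit of norm $+1$, or absorb signs into the recurrence which is unaffected by $\pm$), and $\bar\varepsilon$ is also a local $p$-th power under the same condition by applying the Galois automorphism.

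Next I would introduce the generalised Fibonacci sequence ${\cal F}_n := (\varepsilon^n - \bar\varepsilon^n)/(\varepsilon - \bar\varepsilon)$, which is an element of $o_F$ (indeed an integer, since it is fixed by $\gal(F/\Q)$), satisfying the recurrence stated with ${\cal F}_0 = 0$, ${\cal F}_1 = 1$, and compute ${\cal F}_q \pmod{\mathfrak{p}_v^2}$. From the Binet formula, $(\varepsilon - \bar\varepsilon){\cal F}_q = \varepsilon^q - \bar\varepsilon^q$; working modulo $\mathfrak{p}_v^2$ and using $\varepsilon^q \equiv \varepsilon$, $\bar\varepsilon^q \equiv \bar\varepsilon$ if and only if $\varepsilon$ is a local $p$-th power, one gets $(\varepsilon-\bar\varepsilon){\cal F}_q \equiv \varepsilon - \bar\varepsilon \pmod{\mathfrak{p}_v^2}$ in that case; since $\varepsilon - \bar\varepsilon$ is a $v$-adic unit (as $p$ is unramified and $p \nmid \mathrm{disc}$, so $\sqrt{d_F}$ is a unit at $v$; one checks $\varepsilon - \bar\varepsilon$ has $v$-valuation $0$), dividing gives ${\cal F}_q \equiv 1 \pmod{\mathfrak{p}_v^2}$, and conversely. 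Finally, since ${\cal F}_q \in \Z$ and $v$ has residue degree $f$ over $p$, the congruence ${\cal F}_q \equiv 1 \pmod{\mathfrak{p}_v^2}$ for an integer is equivalent to ${\cal F}_q \equiv 1 \pmod{p^2}$ (because $\mathfrak{p}_v^2 \cap \Z = p^2\Z$ in the unramified case). Hence $\varepsilon$ is a local $p$-th power $\iff {\cal F}_q \equiv 1 \pmod{p^2}$, and condition (ii) of Corollary \ref{cor2} becomes ${\cal F}_q \not\equiv 1 \pmod{p^2}$, completing the proof.

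The main obstacle, modest as it is, will be handling the two cases $p$ split versus $p$ inert uniformly and being careful with signs: when $p$ splits, $v$ has residue degree $f = 1$ so $q = p$, and $\varepsilon$ and $\bar\varepsilon$ land in possibly different completions, so one should phrase everything in terms of a fixed embedding $F \hookrightarrow F_v$ and the fact that $\varepsilon\bar\varepsilon = N(\varepsilon) = \pm 1$; when $p$ is inert, $f = 2$, $q = p^2$, and $\gal(F_v/\Q_p)$ coincides with $\gal(F/\Q)$, which makes the "same holds for $\bar\varepsilon$" step automatic. The sign ambiguity in $N(\varepsilon) = \pm 1$ is harmless because replacing $\varepsilon$ by $\varepsilon^2$ changes neither the property of being a local $p$-th power (as $p$ is odd) nor the congruence class question in an essential way, and the recurrence coefficients $\varepsilon + \bar\varepsilon$ and $\varepsilon\bar\varepsilon$ are rational integers regardless; I would simply remark on this rather than belabour it.
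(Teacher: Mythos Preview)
Your proposal is correct and follows essentially the same route as the paper: both deduce the corollary from Corollary \ref{cor2} by showing, via \cite[Prop 9, page 219]{Serre62}, that $\varepsilon$ is a local $p$-th power precisely when $\varepsilon^q \equiv \varepsilon \pmod{\mathfrak{p}_v^2}$ (and likewise for $\bar\varepsilon$), and then reading this off the Binet formula for ${\cal F}_q$. If anything, you are more careful than the paper about the split/inert dichotomy, the sign of $N(\varepsilon)$, the invertibility of $\varepsilon-\bar\varepsilon$ at $v$, and the passage from $\mathfrak{p}_v^2$ to $p^2\Z$, all of which the paper leaves implicit in the discussion preceding the statement.
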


This provides an algorithm for testing  $p$-rationality of a real quadratic field (and therefore also a real multi-quadratic field). 
%Using a similar characterisation of  the $p$-rationality of quadratic fields, examples of large real multi-quadratic  $p$-rational fields are obtained in  \cite {Bo} for some prime numbers  $p$. 

Now, if we fix the prime  $p$, we have a real  $p$-rational quadratic field which is explicit: 
\begin{pro} 
\label{realquadratic} 
For each prime  $p\neq3$, the real quadratic field  $F=\Q(\sqrt{p(p+2)})$  is  $p$-rational. For  $p=3$  the corresponding field   $F=\Q(\sqrt{15})$ is not $3$-rational.  
\end{pro}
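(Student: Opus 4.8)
The plan is to dispatch the small primes by hand and to reduce the case $p\ge 5$ to the criterion of Corollary~\ref{cor2}, separating an elementary local computation from a genuinely arithmetic class-number statement. For $p=2$ the squarefree kernel of $p(p+2)=8$ is $2$, so $F=\Q(\sqrt2)$, which occurs in the list of quadratic $2$-rational fields following Proposition~\ref{positive cohomology}; hence it is $2$-rational. For $p=3$ one has $F=\Q(\sqrt{15})$, and since $-3\equiv 6\equiv 15\pmod 9$ with $15\neq-3$, Corollary~\ref{cor4}(i) (equivalently Proposition~\ref{mirror}(i)) shows $F$ is not $3$-rational; concretely, $3$ ramifies and $\Q_3(\sqrt{15})=\Q_3(\sqrt{-3})$ already contains $\mu_3$, so condition (ii) of Proposition~\ref{characterisation} fails.

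Now let $p\ge5$. Writing $p+2=mt^2$ with $m$ squarefree gives $F=\Q(\sqrt{pm})$, and because $p\nmid p+2$ the prime $p$ ramifies in $F$ with $e=2<p-1$; thus Corollary~\ref{cor2} applies and it remains to check (i)~$p\nmid h_F$ and (ii)~the fundamental unit of $F$ is not a $p$-th power in the completion $F_v$ above $p$. For (ii) I would use the explicit unit $\varepsilon_0:=(p+1)+\sqrt{p(p+2)}\in o_F$, of norm $(p+1)^2-p(p+2)=1$, hence a power $\pm\varepsilon^{\,k}$ of the fundamental unit $\varepsilon$. In $F_v$ one has $v(p)=2$ while $v(\sqrt{p(p+2)})=\tfrac12\,v(p(p+2))=1$, so $\sqrt{p(p+2)}$ is a uniformiser and $\varepsilon_0=1+\bigl(p+\sqrt{p(p+2)}\bigr)$ lies in $U_v^{(1)}\setminus U_v^{(2)}$. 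Since $e<p-1$, raising to the $p$-th power is an isomorphism $U_v^{(1)}\xrightarrow{\ \sim\ }U_v^{(1+e)}=U_v^{(3)}$ (as recalled before Corollary~\ref{cor2}) and $\mu_p\not\subset F_v$; hence any local $p$-th power in $U_v^{(1)}$ already lies in $U_v^{(3)}\subsetneq U_v^{(2)}$, so $\varepsilon_0$, and therefore $\varepsilon$, is not a $p$-th power in $F_v$. (Equivalently, formula~(\ref{sgn}) then reads $|T_F|\sim h_F$, so that $F$ is $p$-rational iff $p\nmid h_F$.)

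The main obstacle is condition (i), $p\nmid h_F$: no elementary estimate will do, since $h_F$ is unbounded as $p$ varies, so the $p$-part must be controlled $p$-adically. Here I would use Corollary~\ref{Lp is unit}: $F$ is $p$-rational precisely when $L_p(1,\chi_{{}_F})$ is a $p$-adic unit. As $\chi_{{}_F}$ is non-trivial away from $p$, its Iwasawa power series has coefficients in $\Z_p$, so $L_p(s,\chi_{{}_F})\bmod p$ is independent of $s\in\Z_p$ (exactly as in the case $p=3$ used before Proposition~\ref{mirror}); evaluating at $s=1-\tfrac{p-1}{2}$ and using the interpolation property together with the factorisation $\chi_{{}_F}=\chi_{{}_{m^{*}}}\!\left(\tfrac{\cdot}{p}\right)$, $m^{*}=(-1)^{(p-1)/2}m$, collapses $L_p(1,\chi_{{}_F})$ modulo $p$ to a $p$-adic unit times the value $L\!\left(1-\tfrac{p-1}{2},\chi_{{}_{m^{*}}}\right)$ (that is, up to a unit, the generalised Bernoulli number $B_{(p-1)/2,\,\chi_{{}_{m^{*}}}}$), the Euler factor at $p$ being trivial since $\tfrac{p-3}{2}\ge1$. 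Thus $F$ is $p$-rational if and only if this value is a $p$-adic unit, and the remaining task — where I expect all the difficulty to lie — is to prove this non-vanishing modulo $p$: one reduces $B_{(p-1)/2,\,\chi_{{}_{m^{*}}}}$ mod $p$ to an explicit character sum and exploits the defining constraint $mt^{2}=p+2\equiv2\pmod p$, i.e. the identity $p(p+2)=(p+1)^{2}-1$ (which also forces $\chi_{{}_{m^{*}}}(p)=\bigl(\tfrac{-2}{p}\bigr)$), to see that the sum is a unit.
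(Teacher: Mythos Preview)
Your treatment of the small primes and of condition~(ii) is correct and essentially matches the paper; in fact, by working with the auxiliary unit $\varepsilon_0=(p+1)+\sqrt{p(p+2)}$ and noting only that it is a power of the fundamental unit, you sidestep the paper's separate verification that $\varepsilon_0$ actually \emph{is} the fundamental unit. However, that verification is not wasted effort: it is precisely what makes the paper's approach to condition~(i) work, and this is where your proposal goes astray.

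Your claim that ``no elementary estimate will do, since $h_F$ is unbounded as $p$ varies'' is mistaken: what is needed is not a uniform bound on $h_F$ but the inequality $h_F<p$ for each individual $p$, and the paper obtains exactly this. Knowing $\varepsilon_0$ is fundamental gives the regulator $R_F=\log\bigl(p+1+\sqrt{p(p+2)}\bigr)$; combined with Louboutin's explicit upper bound $L(1,\chi_{{}_F})\le(\log d_F+\kappa_2)/4$ (applicable since $2$ ramifies in $F$) and the Dirichlet class number formula~(\ref{class number formula}), a short chain of inequalities yields $h_F<p$, hence $p\nmid h_F$. Note that without knowing $\varepsilon_0$ is fundamental this bound degrades by the unknown factor $k$ with $\varepsilon_0=\pm\varepsilon^{k}$, so the argument you skipped for~(ii) is exactly the missing ingredient for~(i).

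Your proposed alternative route via $L_p(1,\chi_{{}_F})$ and the congruences~(\ref{congruences}) correctly reduces the $p$-rationality of $F$ to the assertion that $B_{(p-1)/2,\chi_{{}_{m^*}}}$ is a $p$-adic unit, but this is a restatement of the problem rather than a solution: the paper in fact runs this implication in the \emph{opposite} direction, using Proposition~\ref{realquadratic} (already proved via the analytic bound) to \emph{deduce} that this Bernoulli number is a unit, as input to Ono's Theorem~\ref{Ono}. Your final sentence --- that one ``reduces\dots to an explicit character sum and exploits\dots $mt^2\equiv 2\pmod p$\dots to see that the sum is a unit'' --- is a hope, not a proof; the conductor of $\chi_{{}_{m^*}}$ can be as large as $4(p+2)$, and no direct evaluation of $B_{(p-1)/2,\chi_{{}_{m^*}}}\bmod p$ follows from the congruence $m\equiv 2t^{-2}\pmod p$ alone.
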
 

\begin{proof}
For  $p=2$, the field in question is  $\Q(\sqrt{2})$, the first layer of the cyclotomic  $\Z_2$-extension of  $\Q$, which is $2$-rational. 
For  $p=3$,  the quadratic field  $\Q(\sqrt{15})$  is not  $3$-rational (Corollary \ref{cor4}). In fact this is because once completed at the  $3$-adic prime it coincides with  $\Q_3(\sqrt{-3})$,  hence condition  (ii)  of Proposition \ref{characterisation} is not fulfilled. 

We may therefore assume  $p>3$.  Denote by  $d$  the square-free part of  $p(p+2)$.  
Since $d\equiv 3 ~ (\mathrm{mod} ~ 4)$, the fundamental unit is of the form  $x+y\sqrt{d}$,  where   $x$  and  $y$  are strictly   positive integers.  We claim that  $\varepsilon:= p+1+\sqrt{p(p+2)}$  is the fundamental unit. Since otherwise there would exist $n\geq2$  such that:  
$$2(p+1)=Tr(\varepsilon)=Tr((x+y\sqrt{d})^n)\geq 2x^n+2x^{n-2}y^2d, $$
where  $Tr:= Tr_{F/\Q}$  is the trace map in  $F/\Q$.  
Since $p\vert d$, we would necessarily have $x=y=1$ and $d=p$. But this is impossible since then  $x+y\sqrt{d}=1+\sqrt{p}$  which is not even a unit. 

Now, the fundamental unit $\varepsilon= p+1+\sqrt{p(p+2)}$  is obviously a principal unit of the completion  $F_v$  of  $F$  at the $p$-adic prime  $v$  and we have 
 $$\varepsilon \in U_v^{(1)}\setminus U_v^{(2)},$$
where  $U_v^{(i)}$  denotes the  $i$-th higher unit group in the local field  $F_v$. 
Therefore  $\varepsilon \not \in  F_v^p$  since raising to the  $p$-th power yields an isomorphism   
\cite [Prop 9, page 219]{Serre62}
$$
\begin{array}{cccc}
 \mu_{p-1}U_v^{(1)}  & \longrightarrow &   \mu_{p-1}U_v^{(3)} \\
 \zeta x & \mapsto & \zeta x^p.\\
\end{array}
$$

It remains to show that  $p$  does not divide the class number  $h_F$. 
%The analytic class number formula for real quadratic fields reads, with usual notations 
%$$h_F=   \frac{L(1,\chi) \sqrt{d_F}}{2R_F}$$
%where $\chi$  is the character associated to our quadratic field  $F$,  $d_F =4d$  is the discriminant and  
%$R_F=\log(p+1+\sqrt{p(p+2)})$  is the regulator. 
Since  $2$ ramifies in $F$, we also have  \cite [Corollary 2]{Louboutin04} 
 $$ L(1,\chi_{{}_F}) \leq \frac{\log{d_F}+\kappa_2}{4},  $$ 
where $\kappa_2:=2+\gamma-\mathrm{log}(\pi)=1.432...$, and $\gamma=0.577...$ denotes Euler's constant. 

For our quadratic field  $F$,  the discriminant  $d_F =4d$  and  $R_F=\log(p+1+\sqrt{p(p+2)})$  is the regulator.  
Consequently, by class number formula  (\ref{class number formula}), we successively have: 

%$$ \begin{tabular}{llll}
%$h_F $& $< $ & $\frac{\log(4p(p+2))+2}{4} \frac{\sqrt{4p(p+2)}}{2\log(p+1+\sqrt{p(p+2)})} $\\ 
%
%& $< $ & $ \frac{\log(4p(p+2))+2}{4} \frac{\sqrt{p(p+2)}}{\log(2\sqrt{p(p+2)})} $\\ 
%
%& $< $ & $ \frac{\log(4p(p+2))+2}{2\log(4p(p+2))} \sqrt{p(p+2)}$\\ 
%
%& $< $ & $ (\frac{1}{2}+ \frac{1}{3}) \sqrt{p(p+2)}$\\ 
%
%& $< $ & $ p $
%\end{tabular} $$

$$ \begin{array}{llll}
 h_F  &  <   &  \frac{\log(4p(p+2))+2}{4} \frac{\sqrt{4p(p+2)}}{2\log(p+1+\sqrt{p(p+2)})}  \\ 
 \\
&  <   &   \frac{\log(4p(p+2))+2}{4} \frac{\sqrt{p(p+2)}}{\log(2\sqrt{p(p+2)})}  \\ 
 \\
&  <   &   \frac{\log(4p(p+2))+2}{2\log(4p(p+2))} \sqrt{p(p+2)} \\ 
 \\
&  <   &   (\frac{1}{2}+ \frac{1}{3}) \sqrt{p(p+2)} \\ 
 \\
&  <   &  p 
\end{array} $$
The above inequality  $ h_F<p$  can also be obtained using  \cite[Corollary 3]{Ramare01}. 
Hence  the prime  $p$  does not divide  $h_F$  and the proof is complete.  
\end{proof}

Let  $p>3$  be an odd prime. If  $p \neq 2{a^2} \pm 1$  for any rational integer  $a$,  then the quadratic field  
$\Q(\sqrt{p^2-1})$  is  $p$-rational  \cite[Lemma 2.2]{BR}. The condition  $p \neq 2{a^2} \pm 1$  is needed to ensure the fact that  $p+\sqrt{p^2-1}$  is  the fundamental unit of  $\Q(\sqrt{p^2-1})$,  whereas in the situation of the above proposition, 
$p+1+\sqrt{p(p+2)}$   turns out to always be the fundamental unit.  
Nevertheless, our method also applies to prove the  $p$-rationality of  $\Q(\sqrt{p^2-1})$ whatever the prime  $p$. 

Now fix a prime number  $p>3$  and  let  $d \neq (-1)^{\frac{p-1}{2}}p$  be a square-free integer such that  
$(-1)^{\frac{p-1}{2}}d >0$.   
Introduce the real quadratic field  $F:=\Q(\sqrt{(-1)^{\frac{p-1}{2}}pd})$  as well as the quadratic field  $K:=\Q(\sqrt{d})$.  
Then  $\chi_{{}_K}$  and  $\frac{p-1}{2}$  are of the same parity so that  $L(1-\frac{p-1}{2},\chi_{{}_K})$  is a non-zero integer. 
We also have \cite[Chapter 5]{Washington97}: 
\begin{equation}\label{congruences}
\begin{array}{llll}
L_p(1,\chi_{{}_F}) &  \equiv & L_p(1-\frac{p-1}{2},\chi_{{}_F}) &  \pmod p \\ 
 \\
& \equiv & L(1-\frac{p-1}{2}, \chi_{{}_F} \omega^{\frac{p-1}{2}})  & \pmod p \\ 
 \\
& \equiv & L(1-\frac{p-1}{2},\chi_{{}_K}) &  \pmod p 
\end{array} 
\end{equation} 

where  $\omega$  is the Teichm\"uller character. 
Thus, according to Corollary \ref{Lp is unit}, the quadratic field  $F$  is  $p$-rational precisely when   $L(1-\frac{p-1}{2},\chi_{{}_K})$  is  a  $p$-adic unit.  
Now take for  $d$  the square-free part of  $(-1)^{\frac{p-1}{2}}(p+2)$. 
Notice that if  $p+2$  is a square then  $p \equiv 3 \pmod 4$  so that  $d$  is never a square, whatever the prime  $p$. 
Then, for this choice of  $d$, the field  $F$  is  $p$-rational  (Proposition  \ref{realquadratic}) as soon as  $p>3$  and accordingly  
$$L(1-\frac{p-1}{2}, \chi_{{}_K})$$    
(so also the generalised Bernoulli number   $B_{\frac{p-1}{2},\chi_{{}_K}}$)  
is a  $p$-adic unit. 
Therefore the hypotheses of the following theorem are fulfilled with  $D_0$  being the discriminant of $K=\Q(\sqrt{d})$:

\begin{theorem} \cite[Theorem 2]{Ono99}. 
\label{Ono} 
Let  $p>3$  be prime, and suppose there is a fundamental discriminant  $D_0$  co-prime to  $p$ for which 
\begin{enumerate}[(i)]
\item  $(-1)^{\frac{p-1}{2}}D_0>0$
\item  $\vert B_{\frac{p-1}{2}, \chi_{{}_{D_0}}} \vert _p =1$. 
\end{enumerate}
Then there is an arithmetic progression  $r_p  \pmod {t_p}$  with  $(r_p,t_r)=1$  and  a constant  $\kappa(p)$  such that for each prime  $\ell  \equiv r_p  \pmod {t_p}$  there is an integer  $1\leq d_\ell \leq \kappa(p) \ell$,  for which 
\begin{enumerate}[(i)]
\item  $D_\ell := d_\ell \ell p$  is a fundamental discriminant, 
\item  $h_{\Q(\sqrt{D_\ell})} \not\equiv 0  \pmod p$, 
\item $\vert \frac{R_p(D_\ell)}{\sqrt{D_\ell}}\vert _p =1$.  
\end{enumerate}
%$$\# \{0<D<X ;  p \nmid h_{\Q(\sqrt{D})}, \; p\vert D \;  and \;  \vert \frac{R_p(D)}{\sqrt{D}}\vert _p =1 \} \gg_p
%\frac{\sqrt{X}}{\log X} $$ 
\end{theorem}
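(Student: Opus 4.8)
The plan is to observe that this is \cite[Theorem 2]{Ono99} and to reduce its conclusion to a nonvanishing statement modulo $p$ for generalised Bernoulli numbers, which one then attacks via half-integral weight modular forms. Write $k:=\tfrac{p-1}{2}$, and for a candidate fundamental discriminant $D=d_\ell\ell p$ put $F=\Q(\sqrt D)$; since $D>0$ this is a real quadratic field, $p$ ramifies in it, there is a single prime $v\mid p$ with $Nv=p$, and $\prod_{v\mid p}(1-(Nv)^{-1})=1-p^{-1}$ is a $p$-adic unit. By Corollary \ref{Lp is unit}, $F$ is $p$-rational exactly when $L_p(1,\chi_{{}_F})$ is a $p$-adic unit, and by the congruences (\ref{congruences}) this happens exactly when the mirror value $B_{k,\chi_{{}_{D^\ast}}}$ is a $p$-adic unit, where $D^\ast$ is the discriminant of $\Q(\sqrt{(-1)^{k}d_\ell\ell})$ — a fundamental discriminant of sign $(-1)^{k}$, prime to $p$. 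Moreover Coates's formula (\ref{coates}) gives $v_p(|T_F|)=v_p(h_F)+v_p\!\big(R_p(D)/\sqrt D\big)$ with both summands $\ge0$, so the single condition ``$L_p(1,\chi_{{}_F})$ a $p$-adic unit'' is in fact equivalent to the conjunction of $(ii)$ $p\nmid h_F$ and $(iii)$ $|R_p(D)/\sqrt D|_p=1$. Thus the whole theorem reduces to: produce, for every prime $\ell$ in a suitable arithmetic progression, a fundamental discriminant $D=d_\ell\ell p$ with $1\le d_\ell\le\kappa(p)\ell$ for which $B_{k,\chi_{{}_{D^\ast}}}$ is a $p$-adic unit. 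Hypothesis $(ii)$ of the theorem provides exactly one such instance — concretely the one coming from $\Q(\sqrt{p(p+2)})$ via Proposition \ref{realquadratic}, as in the discussion preceding the statement — which will serve as a seed.

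Next I would package the family $\{B_{k,\chi_{D_1}}\}_{D_1}$ as the Fourier coefficients of a single modular form. Cohen's Eisenstein series $\mathcal H_{k}=\sum_{N\ge0}H(k,N)q^{N}$ of weight $k+\tfrac12$ on $\Gamma_{0}(4)$ has the property that for $N$ with $(-1)^{k}N\equiv0,1\pmod4$, writing $(-1)^{k}N=D_{1}f^{2}$ with $D_{1}$ fundamental, $H(k,N)$ equals $L(1-k,\chi_{{}_{D_{1}}})=-B_{k,\chi_{{}_{D_{1}}}}/k$ times a divisor sum in $f$; in particular, for $D_{1}$ fundamental of sign $(-1)^{k}$ and prime to $p$, $H(k,|D_{1}|)$ is a $p$-adic unit precisely when $B_{k,\chi_{{}_{D_{1}}}}$ is. Reducing $\mathcal H_{k}$ modulo $p$ gives a nonzero element of the finite-dimensional space of mod-$p$ modular forms of that weight and level — nonzero because the seed $D_{0}$ contributes a unit coefficient — and the dimension of that space, hence the index beyond which a nonzero form must already have displayed a nonzero coefficient, is controlled by Sturm's bound, a quantity depending only on $p$.

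Then one brings in the standard machinery for nonvanishing of mod-$p$ Fourier coefficients: the Shimura lift, the half-integral weight Hecke operators $T_{\ell^{2}}$ (whose action on the eigenform $\mathcal H_{k}$ is explicit), twisting by quadratic characters $\chi_{\ell}$ to move between square classes, and a Chebotarev input to locate primes $\ell$ with prescribed Frobenius. Starting from a squarefree $n_{0}$ below the Sturm bound with $H(k,n_{0})\not\equiv0\pmod p$, one deduces that for every prime $\ell$ in a progression $r_p\pmod{t_p}$ with $(r_p,t_p)=1$ there is a fundamental $D^\ast$ of sign $(-1)^{k}$, prime to $p$, with $\ell\mid D^\ast$ and ``$\ell$-free part'' $d_\ell\le\kappa(p)$ (a fortiori $\le\kappa(p)\ell$), such that $H(k,|D^\ast|)\not\equiv0\pmod p$; the extra congruences forcing $D=d_\ell\ell p$ to be a genuine fundamental discriminant are absorbed into the progression, and $\kappa(p)$ is effective because Sturm's bound and Chebotarev are. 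Reading the first paragraph in reverse then gives $p\nmid B_{k,\chi_{{}_{D^\ast}}}$, hence $\Q(\sqrt D)$ is $p$-rational, hence conclusions $(ii)$ and $(iii)$; and $(i)$ is built into the sign condition, which completes the proof.

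I expect the heart of the matter — and the main obstacle — to be the quantitative nonvanishing in the third step: upgrading ``the mod-$p$ Eisenstein series is not identically zero'' to ``its coefficient at $d_\ell\ell p$ is a unit for some bounded $d_\ell$, for \emph{every} prime $\ell$ in an arithmetic progression'' requires the delicate Hecke-theoretic argument inside the space generated by $\mathcal H_{k}$ under twists and the operators $T_{\ell^{2}}$, exploiting that a nonzero half-integral weight form cannot be concentrated on too sparse a union of square classes. A secondary point needing care is the claim $v_p\!\big(R_p(D)/\sqrt D\big)\ge0$ used in the first paragraph: for $p\mid D$ this follows from the fact that (a power of) the fundamental unit lies in $U_v^{(1)}$, so its $p$-adic logarithm has valuation $\ge\tfrac12=v_p(\sqrt D)$ — and it is precisely this that makes $(ii)\wedge(iii)$ equivalent to, not merely sufficient for, the $p$-rationality of $\Q(\sqrt D)$.
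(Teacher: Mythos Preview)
The paper does not prove this theorem: it is quoted verbatim from \cite[Theorem~2]{Ono99} and used as a black box to obtain Theorem~\ref{infinitely many real quadratic}. There is therefore no ``paper's own proof'' to compare against; the authors simply check that the hypotheses of Ono's theorem are met (via Proposition~\ref{realquadratic} and the congruences~(\ref{congruences})) and read off the conclusion through formula~(\ref{sgn}).

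Your proposal goes further and sketches Ono's actual argument --- the reduction of conclusions (ii) and (iii) to a single $p$-adic unit condition on $B_{k,\chi_{D^\ast}}$, the packaging of these Bernoulli numbers as Fourier coefficients of Cohen's half-integral weight Eisenstein series $\mathcal H_k$, and the propagation of a single nonzero coefficient mod~$p$ to a full arithmetic progression of primes $\ell$ via Hecke operators, twists, Sturm's bound, and Chebotarev. That outline is broadly faithful to \cite{Ono99}. One organizational remark: your first paragraph leans on results \emph{from this paper} (Corollary~\ref{Lp is unit}, Proposition~\ref{realquadratic}, the congruences~(\ref{congruences})) to reinterpret Ono's conclusions, which blurs the line between the theorem being proved and its application here; Ono's own reduction is of course independent of this paper, though the content is the same (Coates's formula plus Kummer congruences). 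The substantive caveat you yourself flag --- that the ``quantitative nonvanishing'' step is the real work --- is accurate: that step is genuinely delicate in \cite{Ono99} and your sketch does not supply it, but since the paper under review does not either, this is not a gap relative to the paper.
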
 

In the above theorem of Ono,  
%$D_\ell$  is a fundamental discriminant,  
$R_p(D_\ell):=R_{p, \Q(\sqrt{D_\ell})}$  is the  $p$-adic regulator of  $\Q(\sqrt{D_\ell})$  and  $\vert \quad \vert_p$  is the  $p$-adic absolute value normalised by  $\vert p \vert_p= 1/p$. 
%Finally  $\gg_p$  means 

Now according to Formula  (\ref{sgn}),  the real quadratic number fields  $\Q(\sqrt{D_\ell})$  with the fundamental discriminants  $D_\ell$  intervening in the above theorem turn out to be all  $p$-rational. Summarising, we proved the following: 

\begin{theorem} 
\label{infinitely many real quadratic} 
%(to be compared with \cite[Theorem 1.1]{Byeon03})
For each prime  $p>3$, there exist infinitely many (with positive density) real quadratic $p$-rational fields in which  $p$ ramifies. 
\end{theorem}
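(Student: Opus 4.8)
The plan is to combine the explicit real bi-quadratic field constructed in Proposition \ref{realquadratic} with Ono's theorem (Theorem \ref{Ono}) exactly as the paragraph preceding the statement suggests. First I would fix the prime $p>3$ and set $d$ to be the square-free part of $(-1)^{\frac{p-1}{2}}(p+2)$, noting (as already observed) that $d$ is never a square: if $p+2$ were a square then $p\equiv 3\pmod 4$, forcing the sign $(-1)^{\frac{p-1}{2}}=-1$, so $d<0$. Thus $K:=\Q(\sqrt d)$ is a genuine quadratic field, and the field $F:=\Q(\sqrt{(-1)^{\frac{p-1}{2}}pd})=\Q(\sqrt{p(p+2)})$ is precisely the one shown to be $p$-rational in Proposition \ref{realquadratic}. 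By Corollary \ref{Lp is unit}, $p$-rationality of $F$ means $L_p(1,\chi_{{}_F})$ is a $p$-adic unit, and by the chain of congruences (\ref{congruences}) this forces $L(1-\tfrac{p-1}{2},\chi_{{}_K})$, hence the generalised Bernoulli number $B_{\frac{p-1}{2},\chi_{{}_K}}$, to be a $p$-adic unit. Therefore the hypotheses $(i)$ and $(ii)$ of Theorem \ref{Ono} hold with $D_0$ the discriminant of $K$, which is coprime to $p$ since $d$ is the square-free part of something coprime to $p$.

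Next I would invoke Theorem \ref{Ono} to produce, for infinitely many primes $\ell$ in an arithmetic progression $r_p\pmod{t_p}$, integers $d_\ell$ with $1\le d_\ell\le\kappa(p)\ell$ such that $D_\ell:=d_\ell\ell p$ is a fundamental discriminant with $h_{\Q(\sqrt{D_\ell})}\not\equiv 0\pmod p$ and $\bigl|R_p(D_\ell)/\sqrt{D_\ell}\bigr|_p=1$. Since the bound $d_\ell\le\kappa(p)\ell$ is linear, distinct primes $\ell$ give infinitely many distinct fundamental discriminants $D_\ell$, so the family $\{\Q(\sqrt{D_\ell})\}$ is genuinely infinite; a standard counting argument (the number of $\ell\le X$ in the progression is $\gg X/\log X$, while each $D_\ell\le\kappa(p)\ell p\le\kappa(p)pX$, and each value $D_\ell$ is hit by boundedly many $\ell$) gives the positive-density claim. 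Each such $D_\ell$ is divisible by the odd prime $p$, so $p$ ramifies in $\Q(\sqrt{D_\ell})$.

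Finally I would feed these fields into formula (\ref{sgn}). For $F_\ell:=\Q(\sqrt{D_\ell})$, since $p$ ramifies we have $e=2$, so $|T_{F_\ell}|\sim h_{F_\ell}R_{p,F_\ell}/p^{1/2}$; comparing $p$-adic valuations and using that $\sqrt{D_\ell}$ has valuation $1/2$ (as $p\,\|\,D_\ell$), condition $(iii)$ of Ono's theorem says $R_{p,F_\ell}$ has the same $p$-adic valuation as $\sqrt{D_\ell}$, i.e. $1/2$, so $R_{p,F_\ell}/p^{1/2}$ is a $p$-adic unit; together with $p\nmid h_{F_\ell}$ from $(ii)$ this yields $|T_{F_\ell}|\sim 1$, hence $T_{F_\ell}=0$. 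Since $F_\ell$ is real quadratic and Leopoldt holds, $T_{F_\ell}=0$ is exactly $p$-rationality. This gives infinitely many real quadratic $p$-rational fields in which $p$ ramifies, completing the proof.

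The main obstacle is really just the bookkeeping of $p$-adic valuations in reconciling (\ref{sgn}) with the normalisations in Theorem \ref{Ono}: one must be careful that $\sqrt{D_\ell}$ contributes the half-integer valuation precisely because $p$ exactly divides the fundamental discriminant $D_\ell$, and that the $p$-adic absolute value convention $|p|_p=1/p$ in Ono's statement matches the $\sim$ convention of (\ref{sgn}). The genuinely substantive input — the existence of the infinite family with all three properties simultaneously — is entirely supplied by Ono's theorem, whose hypotheses we have verified via the explicit field of Proposition \ref{realquadratic}; everything else is assembly.
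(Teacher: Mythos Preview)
Your proposal is correct and follows essentially the same route as the paper: verify Ono's hypothesis via the $p$-rationality of $\Q(\sqrt{p(p+2)})$ from Proposition~\ref{realquadratic} together with the congruences~(\ref{congruences}) and Corollary~\ref{Lp is unit}, then feed Ono's output into formula~(\ref{sgn}). You have simply made explicit the valuation bookkeeping and the density count that the paper leaves to the reader; note only the harmless slip that Proposition~\ref{realquadratic} gives a \emph{quadratic} field, not a bi-quadratic one.
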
 

What we have just explained is inspired by a paper of Byeon \cite[Theorem 1.1]{Byeon03}. 
He does not consider the same quadratic field  $F$ and his proof is longer and more complicated. 
%, but we believe that the strategy is essentially the same. 
For  $p=3$,  there also exist infinitely many real quadratic  $3$-rational fields in which $3$ ramifies (Corollary \ref {infinitely bi-quadratic 3-rational}).

\begin{remark} 
%%\label{markus} 
%1. We also know \cite[Corollary 1.4]{Sc16}  that for all positive integer $m$ and for almost all primes  $p \equiv 3 \pmod 4$ with $(p,m)=1$, there exist infinitely many negative fundamental discriminants  $-d$ prime to $m$ such that  $L(2-\frac{p+1}{2}, \chi_{{}_{-d}})$  is a  $p$-adic unit. 
%This also provides a way to prove the following : fix a finite set  $S$  of rational odd prime numbers. Then, by the congruences (\ref{congruences}), for almost all primes  $p \equiv 3 \pmod 4$ there exist  infinitely many real quadratic $p$-rational fields in which all the primes in  $S$  are unramified. \\
%We also know \cite[Corollary 1.4]{Sc16}  that for all but finitely many primes  $p \equiv 3 \pmod 4$, there exist infinitely many negative fundamental discriminants  $-d$  such that  $L(2-\frac{p+1}{2}, \chi_{{}_{-d}})$  is a  $p$-adic unit. For each such  $d$, introduce the real quadratic field  $F:=\Q(\sqrt{dp)})$.  
%On the other hand, we have \cite[Theorem 5.24]{Washington97} 
%$$ L_p(1,\chi_{{}_F})  = (1- \frac{\chi_{{}_F}(p)}{p}) 2 h_F R_{p,F} / \sqrt{d_F}$$
%Since  
%$$L_p(1,\chi_{{}_F}) \equiv L_p(1-\frac{p-1}{2},\chi_{{}_F})  \equiv L(1-\frac{p-1}{2},\omega^{-\frac{p-1}{2}} \chi_{{}_F})  
%\equiv L(2-\frac{p+1}{2}, \chi_{{}_{-d}})  \pmod p$$
%we obtain the  $p$-rationality of  $F$  by  Coates Formula (\ref{coates}). Finally, we get infinitely many  $p$-rational real quadratic fields for all but finitely many primes  $p \equiv 3 \pmod 4$. 
%2. 
A proof of the existence of infinitely many real quadratic  $5$-rational fields using half-integral weight modular forms can be found in  \cite{AB}. 
\end{remark}

%On the other hand, for any fixed real quadratic field  $F$ and any integer  $X$,  it is proved \cite[Corollary]{maire-Rougnant}, under the generalised  $abc$-conjecture, that the number of primes  $p\leq X$  such that  $F$  is  $p$-rational is at least  $clog(X)$  for a constant  $c$  depending on  $F$. 
%("Greatly inspired by the computations of Silverman")

We finish this section by making a few observations about the special case of  $p=5$. 
  
\begin{pro} 
\label{p=5} 
Let  $d \neq 5$  be a positive square-free integer.  
Then  $\Q(\sqrt{5d})$  is  $5$-rational precisely when   $\Q(\sqrt{d})$ is  $5$-regular. 
\end{pro}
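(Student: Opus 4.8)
The plan is to apply the congruence machinery of equation~(\ref{congruences}) together with Corollary~\ref{Lp is unit} on the $p=5$ side, and on the other side to re-express $5$-regularity of $\Q(\sqrt{d})$ via the Birch--Tate formula for $K_2$. Concretely, set $p=5$, so $\frac{p-1}{2}=2$, and note $(-1)^{\frac{p-1}{2}}=1$, hence $F:=\Q(\sqrt{5d})$ is exactly the field $\Q(\sqrt{(-1)^{\frac{p-1}{2}}pd})$ appearing in the discussion preceding Theorem~\ref{Ono}, with $K:=\Q(\sqrt{d})$. First I would invoke the chain of congruences~(\ref{congruences}), which gives $L_5(1,\chi_{{}_F}) \equiv L(-1,\chi_{{}_K}) \pmod 5$; here one must check the parity condition, namely that $\chi_{{}_K}$ and $\frac{p-1}{2}=2$ have the same parity, i.e. $\chi_{{}_K}$ is even, which holds since $d>0$ so $K$ is real. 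By Corollary~\ref{Lp is unit}, $F$ is $5$-rational precisely when $L_5(1,\chi_{{}_F})$ is a $5$-adic unit, hence precisely when $L(-1,\chi_{{}_K})$ is a $5$-adic unit (a nonzero integer by the parity remark).

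Next I would translate "$L(-1,\chi_{{}_K})$ is a $5$-adic unit'' into "$\Q(\sqrt{d})$ is $5$-regular''. For the real quadratic field $K$, the Dedekind zeta function factors as $\zeta_K(s)=\zeta(s)L(s,\chi_{{}_K})$, so $\zeta_K(-1)=\zeta(-1)L(-1,\chi_{{}_K})=-\tfrac{1}{12}L(-1,\chi_{{}_K})$. Since $5\nmid 12$, the $5$-adic valuations of $\zeta_K(-1)$ and of $L(-1,\chi_{{}_K})$ coincide. By the Birch--Tate formula~(\ref{birch-tate}), valid here because $K$ is totally real and abelian over $\Q$, we have $w_2(K)\zeta_K(-1)=\pm|K_2(o_K)|$; and $w_2(K)$, whose $5$-part is the largest $5^m$ with $\Q(\mu_{5^m})^+\subseteq K$, satisfies $5\nmid w_2(K)$ because $[\Q(\mu_5)^+:\Q]=2$ would force $\Q(\sqrt 5)\subseteq K$, i.e. $d=5$, which is excluded by hypothesis (and $\Q(\mu_{25})^+$ has degree $10$, too large). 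Hence $|K_2(o_K)|$ is prime to $5$ exactly when $\zeta_K(-1)$ — equivalently $L(-1,\chi_{{}_K})$ — is a $5$-adic unit. Since $5$ is odd, $R_2K$ coincides with the $5$-primary part of $K_2(o_K)$, so $\Q(\sqrt d)$ being $5$-regular (trivial $R_2$) is the same as $5\nmid|K_2(o_K)|$.

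Combining the two translations yields the equivalence: $\Q(\sqrt{5d})$ is $5$-rational $\iff L(-1,\chi_{{}_K})$ is a $5$-adic unit $\iff 5\nmid|K_2(o_K)| \iff \Q(\sqrt d)$ is $5$-regular. The one subtlety I expect to be the main obstacle — though it is minor — is making sure the first congruence in~(\ref{congruences}) is genuinely an equivalence modulo $5$ and not merely a one-way implication, i.e. that the Kummer-type congruence $L_p(1,\chi)\equiv L_p(1-\tfrac{p-1}{2},\chi)\pmod p$ from~\cite[Chapter 5]{Washington97} detects $5$-adic units in both directions; this is fine since the congruence is genuinely between the values (the error term has positive $p$-valuation), so a $p$-adic unit on one side forces a $p$-adic unit on the other. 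The remaining bookkeeping — the twist by $\omega^{\frac{p-1}{2}}=\omega^2$ landing on $\chi_{{}_K}$ because $\chi_{{}_F}\omega^2$ and $\chi_{{}_K}$ have the same conductor structure modulo the ramified prime $5$ — is exactly the computation already recorded in~(\ref{congruences}), so no new work is needed there.
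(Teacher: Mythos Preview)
Your proposal is correct and follows essentially the same route as the paper: both arguments combine the congruences~(\ref{congruences}) with Corollary~\ref{Lp is unit} on the $5$-rationality side, and the Birch--Tate formula~(\ref{birch-tate}) together with the fact that $5\nmid w_2(K)$ and $5\nmid 12$ on the $5$-regularity side. Your write-up is slightly more explicit than the paper's in justifying why $5\nmid w_2(K)$ (using the hypothesis $d\neq 5$) and in checking the parity condition, but the logical skeleton is identical.
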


\begin{proof}
Let  $F=\Q(\sqrt{5d})$  and  $K=\Q(\sqrt{d})$.  
By definition,  $K$  is  $5$-regular precisely when  $5$  does not divide the order of the tame kernel  $K_2(o_K)$. 
Since  $2$ and  $3$  are the only prime divisors of  $w_2(K)$  and  $\zeta_{\Q}(-1)=\frac{-1}{12}$, 
the last condition means, by the equality (\ref{birch-tate}), that  $\zeta_K(-1)$  or equivalently   $L(-1,\chi_{{}_K})$ is a  $5$-unit. 
Now, by the congruences (\ref{congruences}), the last condition is, in turn, equivalent to  $L_5(1,\chi_{{}_F})$  being a  $5$-unit. 
Finally Corollary \ref{Lp is unit} completes the proof. 
\end{proof}

\begin{Ex} 
The quadratic field  $\Q(\sqrt{35})$  is both $5$-rational (Proposition \ref{realquadratic})  and  $5$-regular (Proposition \ref{p=5}). 
Therefore, by the discussion following the proof of  Corollary \ref{naito}, there exist infinitely many imaginary  $5$-rational number fields of degree $4$  over  $\Q$  containing  $\Q(\sqrt{35})$.
\end{Ex}

Let $F/k$ be a bi-quadratic extension of totally real number fields with quadratic subfields $k_1,
k_2,k_3$, such that $F/\Q$ is abelian. Then we have the following Brauer relation for the tame kernels \cite[Proposition 1.1]{KM03} 
$$|K_2(o_k)|^2 |K_2(o_F)| = |K_2(o_{k_1})||K_2(o_{k_2})||K_2(o_{k_3})|.$$
Now, let   $F$  be a real multi-quadratic field of degree  $n=2^m$. 
Since the tame kernel  $K_2(\Z)$ of  $\Q$  is of order  $2$  \cite[Corollary 10.2]{Milnor71}, an induction on the integer $m$ readily yields the following formula  
$$2^{n-2} |K_2(o_F)| = \prod |K_2(o_k)|,$$
%up to a power of  $2$/ 
where the product is taken over all quadratic subfields of  $F$. 
In particular, the real multi-quadratic field  $F$  is  $p$-regular for an odd prime  $p$   precisely when it is the case of all its quadratic subfields. 
Recall that the same holds for the  $p$-rationality of  $F$. 
Therefore the above discussion, together with Proposition \ref{p=5} and the fact that  $\Q(\sqrt{5})$  is  $5$-rational, yields the following: 
 
\begin{cor} 
\label{p=5 multiquadratic} 
Let  $F$  be a real multi-quadratic field which is both  $5$-rational and  $5$-regular. Then  $F(\sqrt{5})$  is  $5$-rational (and  $5$-regular). 

\end{cor}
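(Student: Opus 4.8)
Corollary \ref{p=5 multiquadratic} states: if $F$ is a real multi-quadratic field which is both $5$-rational and $5$-regular, then $F(\sqrt{5})$ is $5$-rational (and $5$-regular).

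The plan is to reduce the statement to the quadratic subfields of $F(\sqrt5)$, using the two descent principles already in hand: a real multi-quadratic field is $5$-rational exactly when all of its quadratic subfields are (Corollary \ref{Lp is unit} together with relation (\ref{product of p-adic L})), and, by the formula $2^{n-2}|K_2(o_F)| = \prod|K_2(o_k)|$ over the quadratic subfields, a real multi-quadratic field is $5$-regular exactly when all of its quadratic subfields are. So I would first dispose of the trivial case $\sqrt5\in F$, where $F(\sqrt5)=F$ and nothing has to be proved, and then assume $\sqrt5\notin F$. In that case $F(\sqrt5)$ is again real multi-quadratic, and a short piece of Kummer-theoretic bookkeeping shows that its quadratic subfields are of exactly three types: (a) the quadratic subfields $\Q(\sqrt d)$ of $F$; (b) the field $\Q(\sqrt5)$; and (c) the fields $\Q(\sqrt{5d})$, where $\Q(\sqrt d)$ runs over the quadratic subfields of $F$. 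Since $F$ is totally real and $\sqrt5\notin F$, every such $d$ may be taken square-free with $d>0$ and $d\neq5$, so that Proposition \ref{p=5} applies to the fields in (c).

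Next I would verify $5$-rationality type by type. The subfields of type (a) are $5$-rational because $F$ is. The subfield $\Q(\sqrt5)$ of type (b) is $5$-rational (recall $\Q(\sqrt5)=\Q(\mu_5)^+$ is $5$-rational). For type (c), Proposition \ref{p=5} says that $\Q(\sqrt{5d})$ is $5$-rational precisely when $\Q(\sqrt d)$ is $5$-regular; and $\Q(\sqrt d)$, being a quadratic subfield of the $5$-regular field $F$, is indeed $5$-regular. Hence every quadratic subfield of $F(\sqrt5)$ is $5$-rational, and therefore $F(\sqrt5)$ is $5$-rational.

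For the parenthetical claim that $F(\sqrt5)$ is moreover $5$-regular, I would run the same argument for $5$-regularity. Types (a) are $5$-regular since $F$ is. For type (b), $\Q(\sqrt5)=\Q(\mu_5)^+$ is contained in itself, so by the remark in Section 2 it is $5$-rational precisely when it is $5$-regular; being $5$-rational, it is $5$-regular. For type (c), I would apply Proposition \ref{p=5} with the roles of $d$ and $5d$ interchanged (legitimate since $\Q(\sqrt d)=\Q(\sqrt{5\cdot 5d})$ up to squares): it gives that $\Q(\sqrt{5d})$ is $5$-regular precisely when $\Q(\sqrt d)$ is $5$-rational, which holds because $\Q(\sqrt d)\subseteq F$. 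Thus all quadratic subfields of $F(\sqrt5)$ are $5$-regular, and $F(\sqrt5)$ is $5$-regular. The argument is essentially bookkeeping; the only point needing care is the enumeration of the quadratic subfields of $F(\sqrt5)$ in the first step and the check that the hypotheses $d>0$, $d$ square-free, $d\neq5$ of Proposition \ref{p=5} hold — both immediate from $F$ being totally real with $\sqrt5\notin F$ — so I anticipate no genuine obstacle.
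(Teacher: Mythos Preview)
Your proof is correct and follows essentially the same approach as the paper: reduce to quadratic subfields via the two descent principles (relation (\ref{product of p-adic L}) for $5$-rationality and the Brauer relation $2^{n-2}|K_2(o_F)|=\prod|K_2(o_k)|$ for $5$-regularity), then handle the three types of quadratic subfields of $F(\sqrt5)$ using Proposition \ref{p=5} and the $5$-rationality of $\Q(\sqrt5)$. The paper presents this as a one-line consequence of the preceding discussion, whereas you spell out the Kummer-theoretic enumeration of subfields and the role-swap in Proposition \ref{p=5} for the $5$-regularity part; there is no substantive difference.
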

%In the next section, we will see an alternative proof of the above corollary, valid for an arbitrary prime  $p \equiv 1 \pmod 4$. 

\section{The bi-quadratic case} 
In this section, we will prove that for each prime  $p$  there exist imaginary and real bi-quadratic  $p$-rational fields. Recall that a multi-quadratic field is  $p$-rational precisely when all its quadratic subfields are  \cite[Proposition 3.6]{Greenberg16}. 

\subsection {Imaginary bi-quadratic case} 
The class number of an imaginary quadratic field can be expressed by means of the Kronecker symbol  
$ \left(\frac{\; \cdot \;}{\; \cdot \;}\right) $:   

\begin{theorem} 
\cite[Theorem 219]{Landau58} 
%page 
\label{Landau}
Let $d_F$ be the fundamental discriminant of an imaginary quadratic field $F$. Then 
%$$ h_F=\frac{\vert\mu(F)\vert}{2[2-\left(\frac{d_F}{2}\right)]}\sum_{m=1}^{\lceil\vert d_F \vert /2\rceil} \left(\frac{d_F}{m}\right), $$
$$ h_F=\frac{w}{2[2-\left(\frac{d_F}{2}\right)]}\sum_{r} \left(\frac{d_F}{r}\right), $$
where  $w =2,4$ or $6$  stands for the number of roots of unity in $F$. 
The sum runs over all integers  $r$  from  $1$  to  $-d_F/2$. 
 \end{theorem}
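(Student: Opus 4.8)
The plan is to deduce the formula from Dirichlet's analytic class number formula combined with the Gauss-sum evaluation of $L(1,\chi_{d_F})$, and then to convert the resulting ``first moment'' character sum $\sum_a a\,\chi_{d_F}(a)$ into the half-range sum $\sum_r \chi_{d_F}(r)$ by an elementary combinatorial identity.

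First I would record the analytic class number formula for the imaginary quadratic field $F$: comparing residues at $s=1$ in $\zeta_F(s)=\zeta(s)L(s,\chi_{d_F})$ gives
$$L(1,\chi_{d_F}) = \frac{2\pi h_F}{w\sqrt{|d_F|}}.$$
Next, using that $\chi_{d_F}$ is a primitive quadratic character modulo $q:=|d_F|$ with $\chi_{d_F}(-1)=-1$, I would expand $\chi_{d_F}(n)$ through the Gauss sum $\tau(\chi_{d_F})$ and sum the series $\sum_{n\ge1} e^{2\pi i na/q}/n = -\log(1-e^{2\pi i a/q})$; the real parts cancel because $\chi_{d_F}$ is odd, and what survives is
$$L(1,\chi_{d_F}) = \frac{-i\pi}{q\,\tau(\chi_{d_F})}\sum_{a=1}^{q-1} a\,\chi_{d_F}(a).$$
Inserting $\tau(\chi_{d_F}) = i\sqrt{q}$ (valid since $d_F<0$) and combining with the class number formula yields the intermediate identity
$$h_F = -\frac{w}{2q}\sum_{a=1}^{q-1} a\,\chi_{d_F}(a).$$

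It then remains to prove the purely elementary identity
$$\sum_{a=1}^{q-1} a\,\chi_{d_F}(a) = \frac{-q}{\,2-\left(\frac{d_F}{2}\right)}\sum_{r=1}^{q/2}\chi_{d_F}(r),$$
after which substitution finishes the proof; the positivity of the right-hand side, and hence the disappearance of the sign, is automatic because $h_F>0$. Splitting the left-hand sum at the midpoint $q/2$ and using $\chi_{d_F}(q-a)=-\chi_{d_F}(a)$ reduces it to $2\sum_{0<a<q/2}a\,\chi_{d_F}(a) - q\,\Sigma$, where $\Sigma$ denotes the sum in the statement. When $q$ is odd, applying the substitution $a\mapsto 2a$ to $\sum_{a=1}^{q-1}a\,\chi_{d_F}(a)$ (which permutes the residues mod $q$), and separately to $\Sigma$, expresses everything in terms of $\Sigma$ and $\chi_{d_F}(2)=\left(\frac{d_F}{2}\right)$, and the identity drops out after using $\chi_{d_F}(2)^2=1$.

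The main obstacle is the even-discriminant case $q\equiv 0\pmod 4$, where $\chi_{d_F}(2)=0$ and the doubling substitution degenerates: it no longer permutes residues and merely annihilates the even-index terms without producing new information. There I would instead use the factorization $\chi_{d_F}=\chi_2\cdot\psi$ of $\chi_{d_F}$ into its $2$-part $\chi_2$ (of modulus $4$ or $8$) and its odd part $\psi$; grouping the summands according to the residue of $a$ modulo the power of $2$ dividing $q$ and performing a summation by parts against $\psi$ reduces the claim to a character-sum identity of odd modulus, to which the odd-case argument applies. Assembling the two cases gives the displayed elementary identity, and hence the theorem. The only other slightly delicate ingredient is the sign determination $\tau(\chi_{d_F})=i\sqrt{q}$ for $d_F<0$, which is the sole non-formal input and which one imports from the classical theory of quadratic Gauss sums.
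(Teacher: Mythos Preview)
The paper does not give its own proof of this statement: it is quoted verbatim as \cite[Theorem 219]{Landau58} and used as a black box to bound class numbers of $\Q(\sqrt{-p})$, $\Q(\sqrt{-p\pm 2})$. So there is nothing to compare against on the paper's side.

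Your outline is the classical Dirichlet--Landau argument and is correct in its main line. From the analytic class number formula and the Gauss-sum expansion of $L(1,\chi_{d_F})$ you correctly reach
\[
h_F=-\frac{w}{2q}\sum_{a=1}^{q-1}a\,\chi_{d_F}(a),\qquad q=|d_F|,
\]
and for odd $q$ the doubling substitution $a\mapsto 2a$ indeed gives
\[
(2\chi_{d_F}(2)-1)\sum_a a\,\chi_{d_F}(a)=-q\,\chi_{d_F}(2)\,\Sigma,
\]
whence $\sum_a a\,\chi_{d_F}(a)=\dfrac{-q}{2-\chi_{d_F}(2)}\Sigma$ after multiplying through by $\chi_{d_F}(2)$, exactly the identity you need.

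The only soft spot is the even-discriminant case $4\mid q$. Your proposed reduction via the factorisation $\chi_{d_F}=\chi_2\cdot\psi$ and ``summation by parts against $\psi$'' is a legitimate strategy, but as written it is a plan rather than a proof; the bookkeeping of how the half-range over $[1,q/2]$ interacts with the CRT splitting $q=2^e m$ is not entirely routine. If you want a self-contained argument, it is cleaner to note that for $4\mid q$ every even-index term vanishes, so $S=\sum_{a\ \mathrm{odd}}a\,\chi_{d_F}(a)$; then the involution $a\mapsto q-a$ on odd residues together with $a\mapsto a+q/2$ (which flips the sign of $\chi_{d_F}$ when $8\mid q$, or preserves it when $q\equiv 4\pmod 8$, according to the $2$-part of $\chi_{d_F}$) gives two linear relations between $S$, $T=\sum_{a<q/2}a\,\chi_{d_F}(a)$ and $\Sigma$, from which $S=-\tfrac{q}{2}\Sigma$ follows. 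Either way the claimed identity holds, so your proposal is sound.
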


As an immediate consequence of the above class number formula, we see that  $p$  does not divide the class number of the imaginary quadratic fields  $\Q(\sqrt{-p})$  and  $\Q(\sqrt{-p-2})$, which are therefore  $p$-rational for any  $p>3$ (Corollary \ref{cor3}). Now, as an immediate consequence of  \cite [Proposition 3.6]{Greenberg16}  and Proposition  \ref{realquadratic}, we obtain 
\begin{pro} 
\label{imaginary bi-quadratic} 
For any prime  $p>3$, the imaginary bi-quadratic field  $\Q(\sqrt{p(p+2)}, \sqrt{-p})$  is  $p$-rational.  

\end{pro}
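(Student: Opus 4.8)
The plan is to deduce Proposition \ref{imaginary bi-quadratic} directly from the criterion that a multi-quadratic field is $p$-rational precisely when all its quadratic subfields are $p$-rational \cite[Proposition 3.6]{Greenberg16}, together with the two facts already assembled in the preceding paragraphs. Concretely, the field $L := \Q(\sqrt{p(p+2)},\sqrt{-p})$ is bi-quadratic, and its three quadratic subfields are $\Q(\sqrt{p(p+2)})$, $\Q(\sqrt{-p})$ and $\Q(\sqrt{-(p+2)})$ (the third being the subfield fixed by the remaining nontrivial automorphism, since $\sqrt{p(p+2)}\cdot\sqrt{-p}$ generates $\Q(\sqrt{-p^2(p+2)}) = \Q(\sqrt{-(p+2)})$). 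So it suffices to check that each of these three quadratic fields is $p$-rational for every prime $p>3$.

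First I would invoke Proposition \ref{realquadratic}: for $p\neq 3$ the real quadratic field $\Q(\sqrt{p(p+2)})$ is $p$-rational, and in particular this holds for all $p>3$. Next I would handle the two imaginary subfields $\Q(\sqrt{-p})$ and $\Q(\sqrt{-(p+2)})$. By Corollary \ref{cor3}(i), for $p\geq 5$ an imaginary quadratic field is $p$-rational as soon as $p$ does not divide its class number. So the task reduces to showing $p\nmid h_{\Q(\sqrt{-p})}$ and $p\nmid h_{\Q(\sqrt{-(p+2)})}$. This is exactly what is asserted in the paragraph immediately preceding the proposition, where the class number formula of Landau (Theorem \ref{Landau}) is applied: writing $h_F = \frac{w}{2[2-(\frac{d_F}{2})]}\sum_r (\frac{d_F}{r})$ with the sum over $1\leq r\leq -d_F/2$, the numerator is a sum of at most $(|d_F|-1)/2$ terms each of absolute value $\leq 1$, hence $h_F < |d_F|/2$, which for $d_F$ essentially $-p$ or $-(p+2)$ (up to the factor $4$ in the non-$1\bmod 4$ case, still comfortably below the relevant bound) forces $h_F < p$ and therefore $p\nmid h_F$. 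Since $p>3$ here, $w=2$ and the factor $w/(2[2-(\frac{d_F}{2})])$ only shrinks the bound further.

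Having verified $p$-rationality of all three quadratic subfields of $L$, I would conclude by quoting \cite[Proposition 3.6]{Greenberg16} that $L$ itself is $p$-rational, which is the assertion of the proposition. There is essentially no obstacle: the only mild point requiring care is the explicit identification of the third quadratic subfield $\Q(\sqrt{-(p+2)})$ and, for the class-number estimates, keeping track of whether $-p$, $-(p+2)$ are $\equiv 1 \pmod 4$ so that the fundamental discriminant is $-p$, $-(p+2)$ or four times these; but in every case the crude bound $h_F < p$ from Landau's formula applies, and all three subfields have already been shown $p$-rational in the text, so the present proof is just the assembly of those ingredients.
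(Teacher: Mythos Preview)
Your proposal is correct and follows exactly the paper's approach: the paper establishes the $p$-rationality of $\Q(\sqrt{-p})$ and $\Q(\sqrt{-p-2})$ via Landau's formula (Theorem~\ref{Landau}) and Corollary~\ref{cor3} in the paragraph immediately preceding the proposition, and then states the proposition as an immediate consequence of \cite[Proposition~3.6]{Greenberg16} together with Proposition~\ref{realquadratic}. Your write-up simply unpacks this in more detail (the only minor looseness is the bound in the case $d_F=-4p$, where one should note that the even $r$ contribute zero to the Kronecker sum, and that $w$ need not equal $2$ when the squarefree part of $p+2$ is $1$ or $3$; but in those cases $h_F=1$ anyway).
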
 
On the other hand,  $\Q(\sqrt{2}, \sqrt{-1})=\Q(\mu_8)$  is  $2$  and  $3$-rational.  
We also note that when  $p+2$  is a square, then the field in question is just  $\Q(\sqrt{p}, \sqrt{-1})$, which turns out to be  $p$-rational. 
The same holds when  $p-2$  is a square (Proposition \ref{real bi-quadratic}) and also when $p-1$ or $p-4$  is a square since our methods also lead to the  $p$-rationality of  $\Q(\sqrt{p(p-1)})$  and  $\Q(\sqrt{p(p-4)})$ in a similar manner.  
%In a similar manner, $\Q(\sqrt{p(p-2)}, \sqrt{-p})$  also turns out to be  $p$-rational for  $p>3$. 
One may then ask the following: 

\begin{question} 
\label{Question}
%Does there exist a prime  $p$  for which   $\Q(\sqrt{p})$  is not  $p$-rational? 
Is  $\Q(\sqrt{p})$  always  $p$-rational?
\end{question} 

%We do not know of a single such prime  $p$.  
We do not know of a single prime  $p$  for which  $\Q(\sqrt{p})$  is not  $p$-rational.  
By the same upper bound as in Proposition  \ref{real bi-quadratic}, $p$ does not divide the class number of  $\Q(\sqrt{p})$  and in some sense the above question concerns its fundamental unit.  
We also note that, since the cyclotomic field  $\Q(\mu_p)$  is  $p$-rational for a regular prime, it is also the case of all its subfields. 
Therefore, if such a prime  $p$  exists, it is either irregular or  $\equiv 3 \pmod 4$.

\medskip

\subsection {Real bi-quadratic case} 

\begin{pro} 
\label{real bi-quadratic} 
For each prime  $p>3$, the real bi-quadratic field  $\Q(\sqrt{p(p+2)}, \sqrt{p(p-2)})$  is  $p$-rational.  
\end{pro}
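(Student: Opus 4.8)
The plan is to reduce the $p$-rationality of the real bi-quadratic field $M := \Q(\sqrt{p(p+2)},\sqrt{p(p-2)})$ to the $p$-rationality of its three quadratic subfields, via \cite[Proposition 3.6]{Greenberg16} (equivalently Corollary \ref{Lp is unit} and relation (\ref{product of p-adic L})). The three quadratic subfields of $M$ are $\Q(\sqrt{p(p+2)})$, $\Q(\sqrt{p(p-2)})$, and $\Q(\sqrt{(p+2)(p-2)}) = \Q(\sqrt{p^2-4})$. The first is $p$-rational by Proposition \ref{realquadratic}. So what remains is to establish the $p$-rationality of $\Q(\sqrt{p(p-2)})$ and of $\Q(\sqrt{p^2-4})$, and for these I would run exactly the same argument as in the proof of Proposition \ref{realquadratic}, using Corollary \ref{cor2} (or Corollary \ref{cor4}(ii) in the ramified case): I need (i) $p\nmid h$ and (ii) the fundamental unit is not a $p$-th power locally at a $p$-adic prime.

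For $F_2 := \Q(\sqrt{p(p-2)})$, the prime $p$ ramifies, so I first check we are not in the exceptional case of Corollary \ref{cor4}(i): since $d := \mathrm{sf}(p(p-2)) > 0$ it cannot be $\equiv -3 \pmod 9$, so Corollary \ref{cor4}(ii) applies directly. For the local condition, I claim $\varepsilon := (p-1) + \sqrt{p(p-2)}$ is the fundamental unit (its norm is $(p-1)^2 - p(p-2) = 1$, and a trace/degree estimate identical to the one in Proposition \ref{realquadratic} — comparing $2(p-1) = \mathrm{Tr}(\varepsilon)$ against $\mathrm{Tr}((x+y\sqrt d)^n)$ for $n\ge 2$ and using $p\mid d$ — rules out its being a proper power, with the only potential obstruction $x=y=1, d=p$ excluded since $1+\sqrt p$ is not a unit). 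Then $\varepsilon \in U_v^{(1)}\setminus U_v^{(2)}$ because its "$\sqrt{p(p-2)}$-part" has $v$-valuation exactly $1$, hence $\varepsilon$ is not a local $p$-th power by \cite[Prop 9, page 219]{Serre62}. For $F_3 := \Q(\sqrt{p^2-4})$ the prime $p$ is unramified (indeed inert or split depending on whether $p^2-4$ is a QR mod... well, $p^2 - 4 \equiv -4$, so it depends on $p \bmod 4$); here $p + \sqrt{p^2-4}$ has norm $4$, so the fundamental unit is found among small solutions — I would argue $\frac{p+\sqrt{p^2-4}}{2}$ (when $p$ is odd this is an algebraic integer iff $p^2-4\equiv 1\pmod 4$, i.e. $p$ odd, which holds, since $p^2 - 4 \equiv 1 \bmod 4$ when $p$ is odd) is a unit of norm $1$, and a trace estimate again shows it is fundamental. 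One then checks condition (ii) via the local expansion at a prime above $p$: writing $\sqrt{p^2-4} = \sqrt{-4}\sqrt{1 - p^2/4}$, the element $\frac{p+\sqrt{p^2-4}}{2}$ is a principal unit whose valuation relative to $U_v^{(1)}/U_v^{(2)}$ is to be computed — this identifies it as a non-$p$-th-power by \cite[Prop 9, page 219]{Serre62}.

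The class number bounds are the routine part: $2$ ramifies in $F_2$ and $F_3$ is genuinely real quadratic, so the analytic upper bounds of \cite[Corollary 2]{Louboutin04} (or \cite[Corollary 3]{Ramare01}) combined with Dirichlet's class number formula (\ref{class number formula}) give $h_{F_2} < p$ and $h_{F_3} < p$ by the same chain of inequalities as in Proposition \ref{realquadratic}, using $R_{F_i} = \log(\text{fundamental unit}) \gtrsim \log(2\sqrt{p(p\mp 2)})$ or $\log\bigl(\tfrac{p+\sqrt{p^2-4}}{2}\bigr)$, so $p\nmid h_{F_i}$. Once all three quadratic subfields are known to be $p$-rational, \cite[Proposition 3.6]{Greenberg16} gives that $M$ is $p$-rational, completing the proof.

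The step I expect to be the main obstacle is the local non-$p$-th-power condition for $\Q(\sqrt{p^2-4})$: unlike in Proposition \ref{realquadratic}, here the fundamental unit involves a denominator $2$ and the $p$-adic prime need not be ramified, so one must carefully track the higher unit filtration $U_v^{(i)}$ — in particular verify that the fundamental unit (or an appropriate power $\varepsilon^{q-1}$, $q = p^f$ with $f$ the residue degree) lands in $U_v^{(1)}\setminus U_v^{(2)}$ rather than deeper in the filtration. Identifying the fundamental unit of $\Q(\sqrt{p^2-4})$ unambiguously (the norm being $4$ rather than $1$ means one cannot immediately write down a norm-$1$ generator) is the delicate bookkeeping one has to get right.
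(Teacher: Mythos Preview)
Your proposal is correct and follows essentially the same route as the paper: reduce to the three quadratic subfields via \cite[Proposition~3.6]{Greenberg16}, dispatch $\Q(\sqrt{p(p\pm 2)})$ by the argument of Proposition~\ref{realquadratic}, and handle $k=\Q(\sqrt{p^2-4})$ separately since $p$ is unramified there. The paper carries out exactly the computation you anticipate for the delicate step, expanding $(2\varepsilon)^{q-1}\pmod{p^2 o_k}$ binomially (with $q=p$ or $p^2$ according as $p\equiv 1$ or $3\pmod 4$) to land in $U_v^{(1)}\setminus U_v^{(2)}$.

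One simplification you are over-engineering: for $k=\Q(\sqrt{p^2-4})$ the paper never proves that $\varepsilon=\tfrac{1}{2}(p+\sqrt{p^2-4})$ is the \emph{fundamental} unit, and it need not. It is enough that $\varepsilon$ is \emph{a} unit which is not a local $p$-th power, since the fundamental unit $\eta$ satisfies $\varepsilon=\pm\eta^m$ and hence cannot be a local $p$-th power either. So the ``delicate bookkeeping'' you flag about the denominator $2$ and the norm being $4$ evaporates. For the class-number bound on $k$ the paper uses the sharper inequality $h_k\le\tfrac{1}{2}\sqrt{d_k}\le\tfrac{1}{2}p$ from \cite{Le94} rather than rerunning the Louboutin chain, but your version would work too. (A small slip: your appeal to Corollary~\ref{cor4} is unnecessary, since that corollary is specific to $p=3$; for $p>3$ Corollary~\ref{cor2} applies directly whether or not $p$ ramifies.)
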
 
\begin{proof}
The difference  $p^2-4$ can never be a square so we are really dealing with a bi-quadratic field. 
The proof for the  $p$-rationality of   $\Q(\sqrt{p(p-2)})$ goes along the same line as that of Proposition  \ref{realquadratic} except that the non divisibility of its class number by  $p$  is easier to establish since  $p(p-2)<p^2$ (see for instance \cite[Theorem (a)]{Le94}).  
Note that the quadratic field  $\Q(\sqrt{p(p-2)})$  has been considered in  \cite{Byeon03}. 
%and  a detailed analysis of the study there together with the above formula (\ref{sgn}) leads to the  $p$-rationality of  $\Q(\sqrt{p(p-2)})$. 
Let us now prove the  $p$-rationality of the third quadratic subfield  $k:=\Q(\sqrt{p^2-4})$. 
Introduce   
$$q: = \left\{
\begin{array}{ll} 
p & \mbox{ if }  p \equiv 1 \pmod 4 ,\\ 
p^2 & \mbox{ if }  p \equiv 3 \pmod 4 ,
\end{array}
\right.
$$
and the unit  $\varepsilon :=\frac{1}{2}(p+\sqrt{p^2-4})$. Then,  
$$  \begin{array}{llll} 
       (2\varepsilon)^{q-1} &=(p+\sqrt{p^2-4})^{{q-1}} &\\
       &\equiv  (p^2-4)^{\frac{q-1}{2}}+p(q-1)(p^2-4)^{\frac{q-3}{2}}\sqrt{p^2-4}  &  \pmod {p^2 o_k}\\
        & \equiv (-4)^{\frac{q-1}{2}}-(-4)^{\frac{q-3}{2}} p \sqrt{p^2-4}  &  \pmod {p^2 o_k}\\
        & \equiv 2^{q-1}+2^{q-3}p\sqrt{p^2-4}  &  \pmod {p^2 o_k}
\end{array}$$
where  $o_k$  denotes the ring of integers of  $k$. Accordingly  $\varepsilon \in U_v^{(1)}\setminus U_v^{(2)}$ 
where again  $U_v^{(i)}$  denotes the  $i$-th higher unit group in the local field  $k_v$ for a  $p$-adic prime  $v$  of  $k$. 
This guarantees the fact that our unit  $\varepsilon$  (hence, also the principal unit) is not a  $p$-th power locally at  $v$. Let us now look at the class number $h_k$. It is possible to prove  $h_k<p$  using again  \cite [Corollary 2]{Louboutin04} 
%Since  $2$  is inert in  $k$, we have the following inequality  \cite [Corollary 2]{Louboutin04}
%$$ L(1,\chi) \leq \frac{\log{(d_k)}+\kappa_3}{6}, $$ 
%where the discriminant  $d_k$  is the square-free part of  $p^2-4$,  $\kappa_3:=2+\gamma-\mathrm{log}(\pi/4)=2.818...$ and $\gamma=0.577...$ denotes Euler's constant.  On the other hand, the fundamental unit $\varepsilon_0$  of  $k$ satisfies the inequality   $\varepsilon_0 \geq 1+ \sqrt{d_k}$  so that, by the analytic class number formula, we have 
%$$h_k = \frac{L(1,\chi) \sqrt{d_k}}{2R_k} < \frac{\log{(d_k)}+\kappa_3}{6} \frac{\sqrt{d_k}}{2\log(\sqrt{d_k})} < \frac{\log{(d_k)}+3}{6\log{(d_k)}}  \sqrt{d_k} < \sqrt{d_k} < p$$
but here we have a better upper bound. Namely, 
by  \cite[Theorem (a)]{Le94} or  \cite[Corollary 2]{Ramare01}, we have the following inequalities 
$$h_k \leq \frac{1}{2} \sqrt{d_k} \leq \frac{1}{2} p.$$
%
%$$ \begin{array}{llll}
%h_k & = & \frac{L(1,\chi) \sqrt{d_k}}{2R_k} \\ 
%& < & \frac{\log{(d_k)}+\kappa_3}{6} \frac{\sqrt{d_k}}{2\log(\sqrt{d_k})} \\
%& < & \frac{\log{(d_k)}+3}{6\log{(d_k)}}  \sqrt{d_k} \\
%& < & \sqrt{d_k} \\
%& <  &  p 
%\end{array} $$
%
Hence  the prime  $p$  does not divide  $h_F$  and the field  $k$  is  $p$-rational by Corollary \ref{cor2}. The proof is complete.  
\end{proof}
%It should be noticed that the quadratic field 

%By the same method, one could show the  $p$-rationality of similar bi-quadratic fields such as 
%$$\Q(\sqrt{p(p+4)}, \sqrt{p(p-4)}).$$ 

%$\Q(\sqrt{p(p+r)}, \sqrt{p(p-r)})$   with $r=1$ or  $4$. 

For  $p=3$, the above corresponding bi-quadratic field  $\Q(\sqrt{3}, \sqrt{5})$  is not  $3$-rational since as noticed before   
$\Q(\sqrt{15})$  is not  $3$-rational.  
The field  $\Q(\sqrt{2}, \sqrt{5})$  is both  $2$ and $3$-rational 
(Proposition \ref{positive cohomology} and  and  Corollary \ref{cor2}). 

Note that, by the same method, we can prove the  $p$-rationality of the real quadratic fields

\begin{Ex} 
By the above Proposition \ref{real bi-quadratic}, the bi-quadratic field  $\Q(\sqrt{15}, \sqrt{35})$  is $5$-rational. 
On the other hand, the three quadratic subfields  $\Q(\sqrt{15})$, $\Q(\sqrt{21})$  and  $\Q(\sqrt{35})$  are $5$-regular by  Proposition \ref{p=5},  hence  $\Q(\sqrt{15}, \sqrt{35})$  is also $5$-regular  \cite[Proposition 1.1]{KM03}. 
%Therefore there exist infinitely many imaginary  $5$-rational number fields of degree $8$  over  $\Q$  containing  $\Q(\sqrt{15}, \sqrt{35})$. 
Therefore there exist infinitely many  $5$-rational CM fields having the same maximal real subfield  $\Q(\sqrt{15}, \sqrt{35})$.
\end{Ex}

Let  $\alpha$  be a prime-to-$p$  positive integer and consider the real bi-quadratic field 
$$K_\alpha := \Q( \sqrt{\alpha p(p + 2)},  \sqrt{\alpha p(p-2)}).$$ 
The same arguments as those in the proof of Proposition \ref{realquadratic} and the above Proposition \ref{real bi-quadratic} show  that the fundamental unit of each subfield of  $K_\alpha$  is not locally a  $p$th-power at the $p$-adic primes. So  $K_\alpha$  is  $p$-rational as soon as  $p\nmid h_{K_\alpha}$.  Though there exist  $\alpha$  such that  $p\mid h_{K_\alpha}$  (for instance with  $p=5$  and  $\alpha =17$),  it would be interesting to find an infinite family of integers   $\alpha$  for which  $p\nmid h_{K_\alpha}$.

\subsection {A step further} 
Having the real bi-quadratic  $p$-rational field   $\Q(\sqrt{p(p+2)}, \sqrt{p(p-2)})$  at hand, the first question which springs to mind is how to add an imaginary  square root in order to obtain a tri-quadratic $p$-rational field. We already noticed that the imaginary quadratic fields  
$\Q(\sqrt{-p})$  and  $\Q(\sqrt{-p-2})$  are   $p$-rational. 
Likewise,   $\Q(\sqrt{-p+2})$  is  $p$-rational since  $p$  does not divide its class number according to the above Theorem  \ref{Landau}. Therefore, by Proposition \ref{real bi-quadratic}, the tri-quadratic number field   
$$L:=\Q(\sqrt{p(p+2)}, \sqrt{p(p-2)}, \sqrt{-p}) = \Q(\sqrt{-p-2}, \sqrt{-p}, \sqrt{-p+2})$$  
is  $p$-rational precisely when the imaginary quadratic field $\Q(\sqrt{-p(p^2-4)})$  is   $p$-rational.  
Using PARI/GP, the smallest prime number dividing its class number is   $p_1:=192.699.943$. 
Thus, at least for every prime  $p<p_1$, the above field 
%one sees that  $p_1=192.699.943$  is the only prime number $<192.699.943$ which divide its class number. 
%Gras's algorithm for testing  $p$-rationality shows that the above imaginary quadratic field remains  $p$-rational even for  $p=p_1$. 
%Thus, at least, for every prime  $p<722.500.879$    
$L$  is $p$-rational. It would be interesting to investigate its  $p$-rationality for an arbitrary prime  $p$.  
It is also worth mentioning that the existence of the above tri-quadratic  $p$-rational field together with the method developed by Greenberg  \cite[Proposition 6.1 and Remark 6.8]{Greenberg16}  guarantee the existence of continuous Galois representations  
$$\rho : G_{\Q} \to  GL_n(\Z_p)$$  
of the absolute Galois group  $G_{\Q}$  of  $\Q$  for both  $n =4$  and  $n =5$  and for all  $p <192.699.943$  with an open image.   

Another natural choice is to add the square root of  $-1$  to the real bi-quadratic  $p$-rational field  $\Q(\sqrt{p(p+2)}, \sqrt{p(p-2)})$. Then the  $p$-rationality of the tri-quadratic field 
$$\Q(\sqrt{p(p+2)}, \sqrt{p(p-2)}, \sqrt{-1}) $$  
is equivalent to that of the three imaginary quadratic subfields  other than  $\Q(\sqrt{-1})$
%$$\Q(\sqrt{4-p^2}), \Q(\sqrt{2p-p^2})  and  $\Q(\sqrt{-2p-p^2})$$ 
which is also interesting to investigate.

In any case, we believe that for each odd prime  $p>3$,  there exists a square-free positive integer $d_p$ such that the relative class number of  
$$\Q(\sqrt{p(p+2)}, \sqrt{p(p-2)},\sqrt{-d_p})$$  
is not divisible by $p$. 
Such a tri-quadratic field is then  $p$-rational by Propositions \ref{real bi-quadratic} and \ref{going up quadratic field}. 
Therefore, we would have the above Galois representations  
$$\rho : G_{\Q} \to  GL_n(\Z_p),$$  
for both  $n =4$  and  $n =5$.

\begin{acknowledgements}\label{ackref}
We would like to express our gratitude to Ralph Greenberg for his interest and suggestions which were encouraging and led to the improvement of an early version of this paper. 
\end{acknowledgements}

\affiliationone{% in this example, two authors share an institution
   XLIM UMR 7252 CNRS \\
  Math\'ematiques \\
  Universit\'e de Limoges\\
  123 Avenue Albert Thomas \\
  87060 LIMOGES Cedex\\
  France
   \email{youssef.benmerieme@unilim.fr\\
   mova@unilim.fr}}
% Important: Do not put any empty line here.


\begin{thebibliography}{9}% Replace 9 by 99 if 10 or more references
%
% Please note the use of "\and" between author names below
%

%\bibitem{AAC52} {\bibname  N. C. Ankeny; E. Artin; S. Chowla},
%`The class-number of real quadratic number fields',
%{\em Ann. of Math.} (2) 56 (1952) 479--493. 

%\bibitem{Assim95} {\bibname J. Assim}
%`Codescente en $K$-th\'eorie \'etale et corps de nombres',
%{\em Manuscripta Math.} {\bf 86} (1995), no. 4, 499--518.

\bibitem{AB} {\bibname J. Assim and Z. Bouazzaoui},
`Half-integral weight modular forms and real quadratic  $p$-rational fields', 
{\em Funct. Approx. Comment. Math.}  (2019) 


\bibitem{AM} {\bibname  J. Assim and A. Movahhedi},
`Galois codescent for motivic tame kernels', 
Preprint, https://arxiv.org/abs/1901.07219


\bibitem{BR} {\bibname  R. Barbulescu and J. Ray},
`Some remarks and experiments on Greenberg's $p$-rationality conjecture', 
Preprint, https://arxiv.org/pdf/1706.04847


\bibitem{Beckwith17} {\bibname  O. Beckwith},
`Indivisibility of class numbers of imaginary quadratic fields', 
{\em Res. Math. Sci.} 4 (2017), Paper No. 20, 11 pp. 


%\bibitem[Bo]{Bo} {\bf  Z. Bouazzaoui},
%{\it Fibonacci sequences and real quadratic  $p$-rational fields}, 
%Preprint  arXiv:1902.04795


\bibitem{Browkin85} {\bibname  J. Browkin},
`On the divisibility by  $3$ of $\#K_2O_F$  for real quadratic fields  $F$', 
{\em Demonstratio Math.} 18 (1985), no. 1, 153--159. 
 
 
%\bibitem[Br99]{Bruinier99} {\bf  J.H. Bruinier},
%{\it Nonvanishing modulo $l$ of Fourier coefficients of half-integral weight modular forms}, 
% Duke Math. J. 98 (1999), no. 3, 595--611
 
 
%\bibitem[By01]{Byeon01} {\bf  D. Byeon},
%{\it Indivisibility of class numbers and Iwasawa $\lambda$-invariants of real quadratic fields}, 
% Compositio Math. 126 (2001), no. 3, 249--256. 


\bibitem{Byeon03} {\bibname  D. Byeon},
`Existence of certain fundamental discriminants and class numbers of real quadratic fields', 
{\em J. Number Theory.} 98 (2003), no. 2, 432--437. 


%\bibitem[By03-2]{Byeon03-2} {\bf  D. Byeon},
%{\it Indivisibility of special values of Dedekind zeta functions of real quadratic fields}, 
%Acta Arith. 109 (2003), no. 3, 231--235.
%
%

\bibitem{Coates77} {\bibname  J. Coates},
`$p$-adic $L$-functions and Iwasawa's theory', 
\textit{Algebraic number fields: $L$-functions and Galois properties}, 
(Proc. Sympos., Univ. Durham, Durham, 1975), pp. 269--353. 
Academic Press, London, 1977. 


\bibitem{Colmez88} {\bibname  P. Colmez},
`R\'esidu en s=1 des fonctions z\^eta $p$-adiques', 
{\em Invent. Math.} 91 (1988), no. 2, 371--389. 


\bibitem{Cornut_Ray}{\bibname  C. Cornut and J. Ray },
`Generators of the pro-$p$ Iwahori and Galois representations', 
{\em Int. J. Number Theory} 14 (2018), no. 1, 37--53. 


\bibitem{Fujii08} {\bibname S. Fujii},
`On the maximal pro-$p$ extension unramified outside  $p$  of an imaginary quadratic field', 
 {\em Osaka J. Math.} 45 (2008), no. 1, 41--60. 

%
%\bibitem[Gr82]{Gras82} {\bf  G. Gras},
%{\it Groupe de Galois de la p-extension ab\'elienne p-ramifi\'ee maximale d'un corps de nombres}, 
%J. Reine Angew. Math. 333 (1982), 86--132.
%
%
%\bibitem[Gr83]{Gras83} {\bf  G. Gras},
%{\it Logarithme p-adique et groupes de Galois}, 
%J. Reine Angew. Math. 343 (1983), 64--80.
%


\bibitem{Gras86} {\bibname G. Gras},
`Remarks on K2 of number fields', 
{\em J. Number Theory} 23 (1986), no. 3, 322--335. 
hal-01590183v2

%\bibitem[Gra19]{Gra19} {\bf  G. Gras},
%{\it A program to test the p-rationality of any number  field}, 
%Publ. Math. Fac. Sci. Besancon, to apear. 
%


\bibitem{GJ89} {\bibname  G. Gras  and  J.F. Jaulent},
`Sur les corps de nombres r{\'e}guliers', 
{\em Math. Z.} 202 (1989), no. 3, 343--365. 



\bibitem{Greenberg16} {\bibname  R. Greenberg},
`Galois representation with open image', 
{\em Ann. Math\'{e}matiques du Qu\'{e}bec.} 40(1) (2016) 83--119. 


\bibitem{Hartung74} {\bibname P. Hartung},
`Proof of the existence of infinitely many imaginary quadratic fields whose class number is not divisible by 3',
{\em J. Number Theory.} 6 (1974) 276--278.  


\bibitem{Horie90} {\bibname  K. Horie},
`Trace formulae and imaginary quadratic fields',
{\em Math. Ann.} 288 (1990), no. 4, 605--612. 



\bibitem{Horie_Onishi88}{\bibname  K. Horie and Y. Onishi},
`The existence of certain infinite families of imaginary quadratic fields',
{\em J. Reine Angew. Math.} 390 (1988) 97--113.


\bibitem{Ka} {\bibname  N. M. Katz},
 `A note on Galois representations with big image',
Preprint. 


\bibitem{Kohnen_Ono99} {\bibname  W. Kohnen and K. Ono},
`Indivisibility of class numbers of imaginary quadratic fields and orders of Tate-Shafarevich groups of elliptic curves with complex multiplication', 
{\em Invent. Math.} 135 (1999), no. 2, 387--398. 


\bibitem{Kolster03} {\bibname  M. Kolster},
`Higher relative class number formulae', 
{\em Math. Ann.} 323 (2002), no. 4, 667--692. 


%\bibitem{Kolster04} {\bibname M. Kolster},
%{\it $K$-theory and arithmetic}, Contemporary developments in
%algebraic $K$-theory,  191--258 (electronic), ICTP Lect. Notes, XV,
%Abdus Salam Int. Cent. Theoret. Phys., Trieste, 2004.

 
\bibitem{KM03} {\bibname M. Kolster and A. Movahhedi},
`Bi-quadratic number fields with trivial $2$-primary Hilbert kernels', 
 {\em Proc. London Math. Soc.} (3) 87 (2003), no. 1, 109--136.


\bibitem{Landau58}{\bibname E. Landau}, \textit{Elementary number theory}, 
Translated by J. E. Goodman. Chelsea Publishing Co., New York, N.Y., 1958. 256 pp. 


\bibitem{Le94}{\bibname   Mao Hua Le},
`Upper bounds for class numbers of real quadratic fields', 
{\em Acta Arith.} 68 (1994), no. 2, 141--144. 


\bibitem{Louboutin04} {\bibname  S. R. Louboutin},
`Explicit upper bounds for values at $s=1$ of Dirichlet $L$-series associated with primitive even characters',
{\em J. Number Theory.} 104 (2004), no. 1, 118--131. 


%\bibitem[Lo17]{Louboutin17} {\bf  St\'{e}phane R. Louboutin},
%{\it Upper bounds on $L(1,\chi)$ taking into account ramified prime ideals}. 
%J. Number Theory 177 (2017), 60--72. 


%\bibitem[MR]{MR} {\bf C. Maire and M. Rougnant},
%{\it A note on  $p$-rational fields and the  $abc$-conjecture}, 
%Preprint   



\bibitem{Milnor71}{\bibname J. Milnor}, 
\textit{Introduction to algebraic K-theory},
Annals of Mathematics Studies, No. 72. 
Princeton University Press, Princeton, N.J.; 
University of Tokyo Press, Tokyo, 1971. 
xiii+184 pp. 

%
%\bibitem{Mordell61} {\bibname  L. J. Mordell},
%`On a Pellian equation conjecture. II',
%{\em J. London Math. Soc.} 36 (1961), 282--288. 


\bibitem{Movahhedi88} {\bibname A. Movahhedi},
{\it Sur les $p$-extensions des corps $p$-rationnels},
Th\`{e}se de doctorat en Math\'{e}matiques. Paris VII (1988). 


\bibitem{Movahhedi90} {\bibname A. Movahhedi},
`Sur les $p$-extensions des corps $p$-rationnels',
{\em Math. Nachr.} 149 (1990) 163--176.


\bibitem{Movahhedi-Nguyen90} {\bibname  A. Movahhedi and T. Nguyen Quang Do},
`Sur l'arithm\'{e}tique des corps de nombres $p$-rationnels', in
S\'{e}minaire de Th\'{e}orie des nombres,  Paris 1987-88,
155--200, Progr. Math., 81, Birkh\"{a}user, Boston, Boston, MA, 1990.



\bibitem{Naito91} {\bibname H. Naito},
`Indivisibility of class numbers of totally imaginary quadratic extensions and their Iwasawa invariants',
{\em J. Math. Soc.} Japan 43 (1991), no. 1, 185--194. Erratum: J. Math. Soc. Japan 46 (1994), no. 4, 725--726


\bibitem{Ng86} {\bibname  T. Nguyen Quang Do}
`Sur la $\Z_p$-torsion de certains modules galoisiens',   
{\em Ann. Inst. Fourier (Grenoble).}  36  (1986),  no. 2, 27--46. 


\bibitem{Ng} {\bibname T.  Nguyen Quang Do}
`On Greenberg's generalized conjecture for families of number fields',
Preprint. 


\bibitem{Ono99}{\bibname   K. Ono}
`Indivisibility of class numbers of real quadratic fields',   
{\em Compositio Math.}  119 (1999),  no. 2, 1--11. 



\bibitem{Ramare01} {\bibname  O. Ramar\'e},
`Approximate formulae for  $L(1,\chi)$', 
{\em Acta Arith.} 100 (2001), no. 3, 245--266. 



%\bibitem[Sc16]{Sc16} {\bibname  M. Schwagenscheidt},
%{\it Nonvanishing modulo  $\ell$  of Fourier coefficients of Jacobi forms}. 
%Res. Number Theory 2 (2016), Art. 4, 18 pp.


\bibitem{Serre62}{\bibname J.-P. Serre}, 
\textit{ Corps locaux}, Publications de l'Institut de Math\'ematique de l'Universit\'e de Nancago, VIII Actualit\'es Sci. Indust., No. 1296. Hermann, Paris (1962) 243 pp 


%\bibitem[Se73]{Serre73}{\bf J.-P. Serre}, 
%Formes modulaires et fonctions z\^eta $p$-adiques. In Modular functions of one variable, III (Proc. Internat. Summer School, Univ. Antwerp, 1972), pp. 191--268. Lecture Notes in Math., Vol. 350, Springer, Berlin, 1973. 


\bibitem{Soule79} {\bibname  C. Soul\'e},
`K-th\'eorie des anneaux d'entiers de corps de nombres et cohomologie {\'e}tale',
{\em Invent. Math.} 55 (1979), no. 3, 251--295. 




%A. J. Van Der Poorten, H. J. J. te Riele and H. C. Williams; Computer verification of the Ankeny-
%Artin-Chowla conjecture for all primes less than 100000000000; Math. Comp. 70 (2001), 1311?%1328.
%[13] A. J. Van Der Poorten, H. J. J. te Riele and H. C. Williams; Corrigenda and Addition to ?omputer
%Verification of the Ankeny-Artin-Chowla Conjecture For All Primes Less Than 100000000000? Math. Comp., 72 (2003), 521?3.
 




\bibitem{Washington97} {\bibname L. C. Washington}
\textit{ Introduction to cyclotomic fields}, Second edition. Graduate
Texts in Mathematics {\bf 83}. Springer-Verlag, New York, 1997.

%
%\bibitem{Wiles90} {\bibname  A. Wiles},
%`The Iwasawa conjecture for totally real fields', 
%{\em Ann. of Math.} (2) 131 (1990), no. 3, 493--540. 



\bibitem{Wiles15} {\bibname  A. Wiles},
`On class groups of imaginary quadratic fields', 
{\em J. Lond. Math. Soc.} (2) 92 (2015), no. 2, 411--426. 

\end{thebibliography}
\end{document}